\documentclass[a4paper, 10pt, reqno]{amsart}
\usepackage[T1]{fontenc}
\usepackage{lmodern}
\usepackage[utf8]{inputenc}

\usepackage{amsmath,amssymb,graphicx,mathtools} 
\usepackage{amsthm}
\usepackage[foot]{amsaddr}
\usepackage[a4paper,left=2.6cm,right=2.6cm,top=3cm,bottom=3.5cm]{geometry}
\usepackage{enumerate}
\usepackage{enumitem}
\usepackage{tikz} 
\usetikzlibrary{arrows,decorations.markings}
\usepackage[colorlinks, linkcolor=red, urlcolor=blue, citecolor=blue]{hyperref}
\usepackage{url}
\usepackage{comment}

\usepackage[
    giveninits=true,
    url=false,
    doi=false,
    isbn=false,
    eprint=true,
    datamodel=mrnumber,
    sorting=nyt, 
    sortcites=false, 
    maxbibnames=99,
    maxcitenames=4,
    backref=false,
    block=space,
    backend=biber,
    style=phys,
    biblabel=brackets
]{biblatex}
\AtEveryBibitem{\clearfield{month}}
\AtEveryCitekey{\clearfield{month}} 
\renewbibmacro{in:}{}
\ExecuteBibliographyOptions{eprint=true}
\DeclareFieldFormat[article]{title}{\emph{#1}} 
\DeclareFieldFormat{mrnumber}{\ifhyperref{\href{http://www.ams.org/mathscinet-getitem?mr=#1}{\nolinkurl{MR#1}}}{\nolinkurl{#1}}}
\DeclareFieldFormat{pmid}{\ifhyperref{\href{https://www.ncbi.nlm.nih.gov/pubmed/#1}{\nolinkurl{PMID#1}}}{\nolinkurl{#1}}}
\DeclareFieldFormat{eprint}{\ifhyperref{\href{https://arxiv.org/abs/#1}{\nolinkurl{arXiv:#1}}}{\nolinkurl{#1}}}
\renewbibmacro*{doi+eprint+url}{%
  \iftoggle{bbx:doi}{\printfield{doi}}{}
  \newunit\newblock%
  \printfield{mrnumber}%
  \newunit\newblock%
  \printfield{pmid}%
  \newunit\newblock%
  \printfield{eprint}%
  \iftoggle{bbx:url}{\usebibmacro{url+urldate}}{}}
\bibliography{refs}
\usepackage{caption}
\usepackage{subcaption}

\DeclareMathOperator{\Tr}{Tr}

\DeclareMathOperator*{\argmax}{arg\,max}
\DeclareMathOperator*{\argmin}{arg\,min}

\newcommand{\norm}[1]{\lVert#1\rVert}
\newcommand{\R}{\mathbb{R}} 
\newcommand{\E}{\mathbb{E}}
\newcommand{\N}{\mathbb{N}} 

\theoremstyle{plain}
\newtheorem{thm}{Theorem}[section]

\newtheorem{lem}[thm]{Lemma}
\newtheorem{prop}[thm]{Proposition}

\theoremstyle{definition}
\newtheorem{defn}[thm]{Definition}

\theoremstyle{remark}
\newtheorem{rmk}[thm]{Remark}

\numberwithin{equation}{section}

\makeatletter
\renewcommand\subsection{\@startsection{subsection}{2}%
  \z@{-.5\linespacing\@plus-.7\linespacing}{.5\linespacing}%
  {\normalfont\scshape}}
\renewcommand\subsubsection{\@startsection{subsubsection}{3}%
  \z@{.5\linespacing\@plus.7\linespacing}{-.5em}%
  {\normalfont\scshape}}
\makeatother

\makeatletter
\@namedef{subjclassname@2020}{%
  \textup{2020} Mathematics Subject Classification}
\makeatother

\usepackage[format=plain,
            labelfont=sc]{caption}
\usepackage[toc,page]{appendix} 
\usepackage{scalerel,stackengine}
\stackMath%
\newcommand\reallywidehat[1]{%
\savestack{\tmpbox}{\stretchto{%
  \scaleto{%
    \scalerel*[\widthof{\ensuremath{#1}}]{\kern.1pt\mathchar"0362\kern.1pt}%
    {\rule{0ex}{\textheight}}
  }{\textheight}%
}{2.4ex}}%
\stackon[-6.9pt]{#1}{\tmpbox}%
}

\makeatletter
\newsavebox\myboxA
\newsavebox\myboxB
\newlength\mylenA
\newcommand*\xoverline[2][0.75]{%
    \sbox{\myboxA}{$\m@th#2$}%
    \setbox\myboxB\null
    \ht\myboxB=\ht\myboxA%
    \dp\myboxB=\dp\myboxA%
    \wd\myboxB=#1\wd\myboxA
    \sbox\myboxB{$\m@th\overline{\copy\myboxB}$}
    \setlength\mylenA{\the\wd\myboxA}
    \addtolength\mylenA{-\the\wd\myboxB}%
    \ifdim\wd\myboxB<\wd\myboxA%
       \rlap{\hskip 0.5\mylenA\usebox\myboxB}{\usebox\myboxA}%
    \else
        \hskip -0.5\mylenA\rlap{\usebox\myboxA}{\hskip 0.5\mylenA\usebox\myboxB}%
    \fi}
\makeatother    

\usepackage{fancyhdr}

\newcommand\shortitle{Permutation recovery of spikes in noisy high-dimensional tensor estimation}
\newcommand\name{gérard ben arous, cédric gerbelot, and vanessa piccolo}

\fancyhf{}

\fancyhead[CO]{\scshape\shortitle}
\fancyhead[CE]{\scshape\name}
\setlength{\headheight}{12.0pt}
\fancyhead[R]{\small\thepage\ifodd\value{page}\else\hfill\fi}
\pagestyle{fancy}

\begin{document}

\title{Permutation recovery of spikes in noisy high-dimensional tensor estimation}
\author{Gérard Ben Arous\(^1\)}
\address{\(^1\)Courant Institute of Mathematical Sciences, New York University}
\email{benarous@cims.nyu.edu}
\author{Cédric Gerbelot\(^{1,2}\)}
\author{Vanessa Piccolo\(^2\)}
\address{\(^2\)Unité de Mathématiques Pures et Appliquées (UMPA), ENS Lyon}
\email{cedric.gerbelot-barrillon@ens-lyon.fr}
\email{vanessa.piccolo@ens-lyon.fr}

\subjclass[2020]{68Q87, 62F30, 60H30} 
\keywords{High-dimensional optimization, Multi-spiked tensor PCA, Gradient flow dynamics, Permutation recovery}
\date{\today}

\begin{abstract}
We study the dynamics of gradient flow in high dimensions for the multi-spiked tensor problem, where the goal is to estimate \(r\) unknown signal vectors (spikes) from noisy Gaussian tensor observations. We analyze the maximum likelihood estimator, which corresponds to optimizing a high-dimensional, nonconvex random objective. Our main results determine the sample complexity and runtime required for gradient flow to efficiently recover all spikes, up to a permutation. We show that, during recovery, correlations between the estimators and true spikes increase sequentially, in an order depending on their initial value and on the associated signal-to-noise ratios (SNRs). This ordering determines the permutation under which the spikes are recovered. This work builds on our companion paper~\cite{benarous2024langevin}, which analyzes Langevin dynamics and establishes the sample complexity and SNR conditions required for exact recovery, where the recovered permutation matches the identity. 
\end{abstract}

\maketitle	
{
\hypersetup{linkcolor=black}
\tableofcontents
}

\section{Introduction} \label{section:introduction}

Motivated by recent advances in data science, where gradient-based methods are used routinely to efficiently optimize high-dimensional, nonconvex functions, we study \emph{gradient flow dynamics} in the context of a noisy tensor estimation problem: the \emph{spiked tensor model}. The goal is to recover a hidden vector on the unit sphere from the noisy tensor observations. This problem reduces to optimizing a highly nonconvex random function arising from the maximum likelihood estimation (MLE) method. We generalize previous results for the single-spike case to the \emph{multi-spike setting}, focusing on the sample complexity and runtime required to efficiently recover the \(r\) orthogonal signal vectors from random initialization. The spiked tensor model, introduced by Richard and Montanari~\cite{MontanariRichard} for the single-spike case, has since been widely studied, particularly regarding the optimization dynamics of gradient-based methods. In particular, the algorithmic thresholds for these methods in the single-spike case were analyzed by the first author in collaboration with Gheissari and Jagannath~\cite{arous2020algorithmic, arous2021online}. Our analysis builds on results for Langevin dynamics presented in our companion paper~\cite{benarous2024langevin}, where Langevin dynamics recovers gradient flow in the zero-temperature limit. In that work, the sample complexity threshold was studied under a separation condition on the signal-to-noise ratios (SNRs). In contrast, in this paper, we show that for gradient flow, no such condition is required to fully characterize the optimization dynamics of the MLE objective function. This relaxation introduces the notion of \emph{recovery up to a permutation of the spikes}, which we define below. The core of our analysis lies in a quantitative reduction of the random high-dimensional dynamics to a deterministic low-dimensional dynamical system, where the initial condition determines the permutation of the spikes recovered by the algorithm. 

Our present work, along with~\cite{benarous2024langevin} and a third companion paper~\cite{benarous2024} on the (discrete-time) online stochastic gradient descent (SGD) algorithm, is part of an ongoing research effort to understand the remarkable efficiency of gradient-based methods in high-dimensional, nonconvex optimization problems. The emergence of preferred directions in the trajectories of high-dimensional optimization algorithms has been observed repeatedly, particularly in the context of deep neural networks--for instance, in the work of Papyan, Han, and Donoho~\cite{papyan2020}--and lies at the heart of recent theoretical advances in modern machine learning. In particular, the first author, together with Gheissari and Jagannath~\cite{benarousneurips,ben2022effective}, proposed a general framework for reducing the high-dimensional trajectories of online stochastic gradient descent (SGD) methods to selected projections, called \emph{summary statistics}. In this context, the present work shows that when multiple summary statistics are identified, the resulting low-dimensional dynamical system can exhibit complex and unexpected behavior. At a technical level, however, controlling the noisy part of the dynamics for gradient flow is more challenging than for online SGD, where the noise can be handled uniformly using martingale inequalities, see e.g.~\cite{arous2021online, tan2023online}. Here, the noisy part of the dynamics does not verify convenient martingale properties, necessitating tools to control the resulting correlations across the entire trajectory. In particular, our proof method builds on advances in the analysis of dynamics in spin glass models, developed by the first author jointly with Gheissari and Jagannath~\cite{ben2020bounding,arous2020algorithmic}. These results allow us to overcome the limitations of standard techniques from statistical physics, see e.g.~\cite{sompolinsky1981dynamic, crisanti1993spherical, cugliandolo1993analytical}, and probability theory, see e.g.~\cite{grunwald1996sanov,arous2001aging,ben2006cugliandolo}, to analyze gradient flow trajectories on random landscapes. Further details on related works, relevant to both probability theory and machine learning theory, can be found in the literature sections of our companion papers~\cite{benarous2024langevin, benarous2024}.

\subsection{Model}

The multi-spiked tensor model is defined as follows. Let \(p \ge 3\) and \(r \ge 1\) be fixed integers. Suppose that we are given \(M\) i.i.d.\ observations \(\boldsymbol{Y}^\ell\) of a rank-\(r\) \(p\)-tensor on \(\R^N\) of the form
\begin{equation}\label{eq: spiked tensor model}
\boldsymbol{Y}^\ell = \sum_{i=1}^r \lambda_i \sqrt{N} \left (\frac{\boldsymbol{v}_i}{\sqrt{N}}\right)^{\otimes p} + \boldsymbol{W}^\ell ,
\end{equation}
where \((\boldsymbol{W}^\ell)_\ell\) are i.i.d.\ samples of a \(p\)-tensor with i.i.d.\ entries \(W^\ell_{i_1, \ldots, i_p} \sim \mathcal{N}(0,1)\), \(\lambda_1 \geq \cdots \geq \lambda_r \geq 0\) are the signal-to-noise ratios (SNRs), and \(\boldsymbol{v}_1, \ldots,\boldsymbol{v}_r\) are \emph{unknown, orthogonal vectors} lying on the \(N\)-dimensional sphere of radius \(\sqrt{N}\), denoted by \(\mathbb{S}^{N-1}(\sqrt{N})\). The orthogonality assumption simplifies the analysis slightly. The scaling in~\eqref{eq: spiked tensor model} is chosen such that the signal and the typical fluctuations of the noise are of the same order of magnitude $\sqrt{N}$.

The goal is to estimate the unknown signal vectors \(\boldsymbol{v}_1, \ldots, \boldsymbol{v}_r\) via empirical risk minimization:
\begin{equation} \label{eq: ERM}
[\hat{\boldsymbol{v}}_1, \ldots, \hat{\boldsymbol{v}}_r] = \argmin_{\boldsymbol{X} \colon \boldsymbol{X}^\top \boldsymbol{X} = N \boldsymbol{I}_r} \hat{\mathcal{R}}_{N,r}(\boldsymbol{X}),
\end{equation}
where the empirical risk \(\hat{\mathcal{R}}_{N,r}\) is defined as
\[
\hat{\mathcal{R}}_{N,r}(\boldsymbol{X}) = \frac{1}{M} \sum_{\ell=1}^M \mathcal{L}_{N,r}(\boldsymbol{X};\boldsymbol{Y}^\ell).
\]
The constraint set \(\{\boldsymbol{X} \in \R^{N \times r} \colon \boldsymbol{X}^\top \boldsymbol{X} = N \boldsymbol{I}_r \}\) consists of \(N \times r\) matrices with orthogonal columns of norm \(\sqrt{N}\), referred to as the \emph{normalized Stiefel manifold}: 
\begin{equation} \label{def: normalized stiefel manifold}
\mathcal{S}_{N,r} = \left \{ \boldsymbol{X} = [\boldsymbol{x}_1, \ldots, \boldsymbol{x}_r] \in \R^{N \times r} \colon \langle \boldsymbol{x}_i, \boldsymbol{x}_j \rangle = N \delta_{ij}\right \}.
\end{equation}
We solve the optimization problem~\eqref{eq: ERM} using maximum likelihood estimation (MLE), where the loss function \(\mathcal{L}_{N,r} \colon \mathcal{S}_{N,r}\times (\R^N)^{\otimes p} \to \R\) is given by
\[
\mathcal{L}_{N,r}(\boldsymbol{X}; \boldsymbol{Y}^\ell) = - \sum_{i=1}^r \lambda_i \sqrt{N} \left \langle\boldsymbol{Y}^\ell, \left(\frac{\boldsymbol{x}_i}{\sqrt{N}}\right)^{\otimes p} \right \rangle.
\]
Substituting the tensor model~\eqref{eq: spiked tensor model} into this expression, the loss function results in
\[
\mathcal{L}_{N,r}(\boldsymbol{X}; \boldsymbol{Y}^\ell)  = - \frac{1}{N^{\frac{p-1}{2}}} \sum_{i=1}^r \lambda_i \langle \boldsymbol{W}^\ell, \boldsymbol{x}_i^{\otimes p} \rangle -  \sum_{1 \le i,j \le r}  N \lambda_i \lambda_j \left ( \frac{\langle \boldsymbol{v}_i, \boldsymbol{x}_j \rangle}{N}\right )^p.
\]
Given the Gaussian assumption on $\boldsymbol{W}^{\ell}$, optimizing the empirical risk \(\hat{\mathcal{R}}_{N,r}\) is equivalent, in distribution, to minimizing 
\begin{equation} \label{eq: empirical risk}
\mathcal{R} (\boldsymbol{X}) = \frac{1}{\sqrt{M}} H_0(\boldsymbol{X}) -  \sum_{1 \leq i,j \leq r} N \lambda_i \lambda_j \left( m_{ij}^{(N)}(\boldsymbol{X}) \right)^p,
\end{equation}
where \(m_{ij}^{(N)}(\boldsymbol{X}) \coloneqq N^{-1} \langle \boldsymbol{v}_i, \boldsymbol{x}_j \rangle\) denotes the \emph{correlation} of \(\boldsymbol{v}_i\) with  \(\boldsymbol{x}_j\). Here, the Hamiltonian \(H_0 \colon \mathcal{S}_{N,r} \to \R\) is given by
\begin{equation} \label{eq: Hamiltonian H0}
H_0(\boldsymbol{X}) = \frac{1}{N^{\frac{p-1}{2}}} \sum_{i=1}^r \lambda_i  \langle \boldsymbol{W}, \boldsymbol{x}_i^{\otimes p} \rangle.
\end{equation}
We note that \(H_0\) is a centered Gaussian process with covariance of the form 
\[
\E \left[H_0 (\boldsymbol{X})H_0(\boldsymbol{Y})\right] = N \sum_{1\leq i,j\leq r} \lambda_i \lambda_j \left(\frac{\langle \boldsymbol{x}_i, \boldsymbol{y}_{j} \rangle}{N}\right)^p.
\]

\subsection{Gradient flow dynamics}  \label{subsection: algorithm}

The gradient flow can be interpreted as the limiting dynamics of gradient descent with infinitesimal step size. Given an initial condition \(\boldsymbol{X}_0 \in \mathcal{S}_{N,r}\), which is possibly random, we let \(\boldsymbol{X}_t \in \mathcal{S}_{N,r}\) solve the following ordinary differential equation:
\begin{equation} \label{eq: GF}
\frac{\textnormal{d} \boldsymbol{X}_t}{\textnormal{d} t} = - \nabla \mathcal{R}(\boldsymbol{X}_t),
\end{equation}
where \(\nabla\) denotes the Riemannian gradient on the manifold \(\mathcal{S}_{N,r}\). Specifically, for any function \(f \colon \mathcal{S}_{N,r} \to \R\), the Riemannian gradient is given by
\begin{equation}  \label{eq: riemannian gradient on normalized stiefel}
\nabla f(\boldsymbol{X}) = \hat{\nabla} f(\boldsymbol{X}) - \frac{1}{2N} \boldsymbol{X} \left (\boldsymbol{X}^\top \hat{\nabla} f(\boldsymbol{X}) + \hat{\nabla} f(\boldsymbol{X})^\top \boldsymbol{X} \right ),
\end{equation}
where \(\hat{\nabla}\) denotes the Euclidean gradient. The Lie derivative operator associated with the deterministic gradient flow is given by
\begin{equation} \label{eq: generator GF}
L = - \langle \nabla  \mathcal{R}, \hat{\nabla} \cdot \rangle,
\end{equation}
where the inner product \(\langle \boldsymbol{A}, \boldsymbol{B} \rangle\) denotes the trace inner product between matrices, i.e., \(\langle \boldsymbol{A}, \boldsymbol{B} \rangle = \Tr(\boldsymbol{A}^\top \boldsymbol{B})\). This operator \(L\) describes the infinitesimal evolution of smooth functions along the gradient flow vector field \(- \nabla \mathcal{R}\), and can be interpreted as the Lie derivative along \(- \nabla \mathcal{R}\). Similarly, we define the operator \(L_0\) as the infinitesimal evolution induced by the gradient flow associated with the noise Hamiltonian \(H_0\):
\begin{equation} \label{eq: generator GF noise}
L_0 = - \frac{1}{\sqrt{M}} \langle \nabla H_0, \hat{\nabla} \cdot \rangle.
\end{equation}

\subsection{Main results} \label{subsection: main results}

Our goal is to determine the \emph{sample complexity} (i.e., the number of observations required) and the \emph{computational runtime} (i.e., the time horizon of the gradient flow) needed to recover the unknown orthogonal vectors \(\boldsymbol{v}_1, \ldots, \boldsymbol{v}_r\) via the gradient flow~\eqref{eq: GF}. From this point onward, we consider the process \((\boldsymbol{X}_t)_{t \ge 0}\) defined by~\eqref{eq: GF}, initialized randomly with \(\boldsymbol{X}_0\) drawn from the uniform distribution \(\mu_{N \times r}\) on \(\mathcal{S}_{N,r}\). The measure \(\mu_{N \times r}\) is the unique probability distribution on \(\mathcal{S}_{N,r}\) that is invariant under both the left and right orthogonal transformations. We consider the probability space \((\Omega,\mathcal{F},\mathbb{P})\) on which the \(p\)-tensors \((\boldsymbol{W}^\ell)_\ell\) are defined. We denote by \(\mathbb{P}_{\boldsymbol{X}_0}\) the law of the process \((\boldsymbol{X}_t)_{t \ge 0}\) initiated at \(\boldsymbol{X}_0 \sim \mu_{N \times r}\). More precisely, following the convention of~\cite[Chapter 6]{le2013mouvement}, we have
\[
\mathbb{P}_{\boldsymbol{X}_0}(A) = \int_{\mathcal{S}_{N,r}} \mathbb{P}_{\boldsymbol{X}}(A) \textnormal{d} \mu_{N \times r}(\boldsymbol{X}),
\]
for any measurable set $A$ in the \(\sigma\)-algebra generated by the coordinate mappings from $\R^+$ to $\mathcal{S}_{N,r}$. We also define \(\mathbb{P}_{\boldsymbol{X}_0^+}\) as the law of the process initiated at \(\boldsymbol{X}_0 \sim \mu_{N \times r}\), subject to the condition \(m_{ij}(\boldsymbol{X}_0)>0\) for all \(1 \le i,j \le r\). 

\paragraph{\emph{Notations}.} 
For a positive integer \(n \in \N\), we denote \([n] \coloneqq \{1, \ldots, n\}\). For two sequences \(x_N\) and \(y_N\), we write \(x_N \ll y_N\) to indicate that \(x_N / y_N \to 0\) as \(N \to \infty\).
\medskip

We are now ready to present our main results. Throughout this section, we assume that the SNRs \(\lambda_1 \ge \cdots \ge \lambda_r \ge 0\) are of order \(1\). While the statements below are presented in asymptotic form, we provide stronger non-asymptotic formulations—including explicit constants and convergence rates---in Section~\ref{section: main results}. 
 
\begin{thm}[Recovery up to a permutation] \label{thm1}
If the number of samples satisfies \(M \gg N^{p-1}\), then there exists a permutation \(\sigma^\ast \in S_r\) and a time \(T_0 \gg  N^{\frac{p-2}{2}}\) such that for every \(\varepsilon >0\) and every \(T \ge T_0\),
\[
\lim_{N \to \infty} \mathbb{P}_{\boldsymbol{X}_0^+} \left( \inf_{t \in [T_0,T]} \, m_{\sigma^\ast (i) i}^{(N)}(\boldsymbol{X}_t)  \geq 1 - \varepsilon \right) = 1.
\]
\end{thm}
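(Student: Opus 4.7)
The plan is to reduce the $Nr$-dimensional gradient flow~\eqref{eq: GF} to a low-dimensional, essentially deterministic system governing the $r^2$ correlations $m_{ij}^{(N)}(\boldsymbol{X}_t)$, and then to read off the recovery statement from the phase portrait of this reduced system. Applying the chain rule along~\eqref{eq: GF} together with the Riemannian gradient formula~\eqref{eq: riemannian gradient on normalized stiefel}, one derives
\[
\frac{d}{dt} m_{ij}^{(N)}(\boldsymbol{X}_t) = F_{ij}\bigl(\{m_{k\ell}^{(N)}(\boldsymbol{X}_t)\}_{k,\ell}\bigr) + \xi_{ij}(t),
\]
where $F_{ij}$ is an explicit polynomial in the correlations coming from the signal part of $\mathcal{R}$ in~\eqref{eq: empirical risk} and $\xi_{ij}(t) = M^{-1/2} L_0 m_{ij}^{(N)}(\boldsymbol{X}_t)$ is a noise contribution driven by the Gaussian Hamiltonian $H_0$ from~\eqref{eq: Hamiltonian H0}. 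The crucial feature is that $F_{ij}$ depends on $\boldsymbol{X}_t$ \emph{only through the correlations themselves}, so the $r^2$ variables form a closed system up to the noise $\xi_{ij}$.

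The hard part will be controlling $\xi_{ij}$ uniformly over the long time window $[0,T]$ with $T \sim N^{(p-2)/2}$. Unlike the online SGD setting of~\cite{arous2021online, tan2023online}, the gradient flow noise does not assemble into a martingale, so Burkholder--Davis--Gundy is unavailable. To handle this, I would adapt the spin-glass techniques from~\cite{ben2020bounding, arous2020algorithmic} of the first author with Gheissari and Jagannath: combine Gaussian concentration for $H_0$ and its derivatives with uniform Lipschitz estimates on $\mathcal{M}_{N,r}$ via entropy and chaining, to obtain that $\sup_{t \in [0,T]} |\xi_{ij}(t)|$ is of order $(\log N)^{O(1)}/\sqrt{M}$ with high probability. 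At the near-equator scale $m_{ij}^{(N)} \sim N^{-1/2}$, the leading part of $F_{ij}$ is of order $N^{-(p-1)/2}$, so the noise bound matches the signal exactly when $M \gg N^{p-1}$, explaining the sample-complexity threshold in the theorem.

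With this noise control in hand, define $(m_{ij}^\infty(t))_{i,j \in [r]}$ as the solution of the deterministic system $\frac{d}{dt} m_{ij}^\infty = F_{ij}(m^\infty)$ with $m_{ij}^\infty(0) = m_{ij}^{(N)}(\boldsymbol{X}_0)$. A Grönwall-type comparison against the (growing) signal then yields $\sup_{t \le T}|m_{ij}^{(N)}(\boldsymbol{X}_t) - m_{ij}^\infty(t)| = o(1)$ with high probability, reducing the theorem to an ODE analysis of the $r^2$-dimensional system.

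Finally, I would analyze this deterministic system. Haar concentration on $\mathcal{M}_{N,r}$ gives $m_{ij}^{(N)}(\boldsymbol{X}_0) \sim N^{-1/2}$ uniformly in $i,j$, with no two of the products $\lambda_i |m_{ij}^{(N)}(\boldsymbol{X}_0)|$ coinciding almost surely. In this near-equator regime, $F_{ij}$ is dominated by a term proportional to $\lambda_i \lambda_j (m_{ij}^\infty)^{p-1}$, so the first correlation to reach a macroscopic value is the one maximizing $\lambda_i |m_{ij}^\infty(0)|$; call the corresponding indices $(\sigma_\ast(1),1)$ after relabeling columns. Once $|m_{\sigma_\ast(1) 1}^\infty|$ becomes macroscopic, the Stiefel constraint $\boldsymbol{X}_t^\top \boldsymbol{X}_t = N\boldsymbol{I}_r$ enforced by the projection term in~\eqref{eq: riemannian gradient on normalized stiefel} drives $m_{\sigma_\ast(1) j}^\infty \to 0$ for $j \neq 1$ and $m_{i 1}^\infty \to 0$ for $i \neq \sigma_\ast(1)$, effectively removing the spike $\boldsymbol{v}_{\sigma_\ast(1)}$ and the column $\boldsymbol{x}_1$ from the dynamics. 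Iterating on the residual $(r-1)\times(r-1)$ block of correlations produces the full random permutation $\sigma_\ast \in S_r$. Each escape stage requires time $\sim N^{(p-2)/2}$ to cross the equator under a $p$-th order polynomial drift from scale $N^{-1/2}$, so $T \gg N^{(p-2)/2}$ is enough to drive every $|m_{\sigma_\ast(i) i}^{(N)}(\boldsymbol{X}_T)|$ above $1-\varepsilon$.
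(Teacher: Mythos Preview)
Your overall architecture matches the paper's: derive closed evolution equations for the $r^2$ correlations via the generator, control the noise contribution $L_0 m_{ij}$ using the regularity of $H_0$ from~\cite{ben2020bounding,arous2020algorithmic}, and then argue a sequential escape with the Stiefel correction killing the competing rows and columns. Two points, however, are genuine gaps.

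First, your escape criterion is wrong. The near-equator drift is $p\lambda_i\lambda_j m_{ij}^{p-1}$, so starting from $m_{ij}(0)=\gamma_{ij}N^{-1/2}$ the comparison solution is $m_{ij}(t)\approx \gamma_{ij}N^{-1/2}\bigl(1-p(p-2)\lambda_i\lambda_j\gamma_{ij}^{p-2}N^{-(p-2)/2}t\bigr)^{-1/(p-2)}$, with blow-up time proportional to $(\lambda_i\lambda_j\gamma_{ij}^{p-2})^{-1}$. Hence the first correlation to escape maximizes $\lambda_i\lambda_j\,m_{ij}(0)^{p-2}$, not $\lambda_i|m_{ij}(0)|$. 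This is precisely the entry of the initialization matrix $\boldsymbol{I}_0$ in~\eqref{eq: initialization matrix}, and the permutation $\sigma_\ast$ is the greedy maximum selection of $\boldsymbol{I}_0$ (Definition~\ref{def: greedy operation}). Your criterion misses the factor $\lambda_j$ and the exponent $p-2$, and would produce the wrong $\sigma_\ast$.

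Second, the step ``a Gr\"onwall-type comparison \ldots\ yields $\sup_{t\le T}|m_{ij}^{(N)}-m_{ij}^\infty|=o(1)$'' does not go through as stated. The vector field $F$ has Lipschitz constant of order $\sqrt{M}$ on the macroscopic part of the manifold (recall the factor $\sqrt{M}$ in front of the polynomial drift in Lemma~\ref{lem: evolution equation m_ij}), while the relevant time window has length of order $N^{(p-2)/2}/\sqrt{M}$; a direct Gr\"onwall bound on the difference to a single deterministic trajectory therefore picks up a factor $\exp(cN^{(p-2)/2})$ and is useless. The paper does not compare to one trajectory but instead sandwiches each $m_{ij}$ between explicit lower and upper solutions $\ell_{ij}(t)\le m_{ij}(t)\le u_{ij}(t)$ of the form~\eqref{eq: function ell GF 1}--\eqref{eq: function u GF 1}, obtained from the integral inequalities $(1\pm C_0)\sqrt{M}p\lambda_i\lambda_j m_{ij}^{p-1}$ via Lemma~\ref{lem: Gronwall}. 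Because both envelopes blow up and their blow-up times are comparable, the sandwich survives the near-singular regime. This is the content of the bounding-flows lemma (Lemma~\ref{lem: bounding flows}) and is the actual replacement for your Gr\"onwall step.

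A smaller but nontrivial omission: after $m_{i_1^\ast j_1^\ast}$ becomes macroscopic you assert that one can ``iterate on the residual $(r-1)\times(r-1)$ block''. For this to work you must show that the correlations $m_{ij}$ with $i\neq i_1^\ast$, $j\neq j_1^\ast$ remain at scale $N^{-1/2}$ throughout the first escape, since the Stiefel correction term can in principle drag them down. The paper handles this by tracking the correction term once $m_{i_1^\ast j_1^\ast}$ crosses the intermediate thresholds $N^{-\frac{p-2}{2p}}$ and $N^{-\frac{p-3}{2(p-1)}}$ and showing the induced decrease is at most $O(\log N/N)$; this is the substance of Lemma~\ref{lem: E_1 GF} and its proof, and it does not come for free from the reduced ODE picture.
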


Theorem~\ref{thm1} establishes that, under a positive initialization of the correlations, gradient flow successfully recovers all signal directions up to a permutation, provided the number of samples \(M\) scale as \(N^{p-1}\). In our companion work~\cite{benarous2024langevin}, we show that Langevin dynamics achieves exact recovery (i.e., with \(\sigma^\ast\) being the identity permutation), provided the SNRs are separated by large constants independent of \(N\). Since gradient flow corresponds to the zero-temperature limit of Langevin dynamics, Theorems 1.4 and 1.5 of~\cite{benarous2024langevin} extend naturally to the gradient flow setting (see Remark~\ref{rmk: langevin SNRS} for more details). In the remainder of this section, we refine Theorem~\ref{thm1} in two key directions: we remove the assumption that the initial correlations are strictly positive and characterize the permutation \(\sigma^\ast\) governing the recovered spikes. This permutation can be explicitly determined via the following procedure.

\begin{defn}[Greedy maximum selection] \label{def: greedy operation}
Let \(\boldsymbol{A} \in \R^{r \times r}\) be a matrix whose nonzero entries are all distinct. We define a sequence of index pairs \((i_k^\ast, j_k^\ast) \in [r]^2\) recursively as follows:
\begin{itemize}
\item[1.] Set \(\boldsymbol{A}^{(0)} \coloneqq \boldsymbol{A}\).
\item [2.] For \(k = 1, 2, \ldots\), define
\[
(i_k^\ast, j_k^\ast) \coloneqq \argmax_{1 \le i,j \le r-(k-1)} | \boldsymbol{A}^{(k-1)} |_{ij},
\]
where \(\boldsymbol{A}^{(k-1)} \in \R^{(r-(k-1)) \times (r-(k-1))}\) is obtained from \(\boldsymbol{A}\) by removing the rows \(i_1^\ast, \ldots, i_{k-1}^\ast\) and the columns \(j_1^\ast, \ldots, j_{k-1}^\ast\), and \(|\boldsymbol{A}^{(k-1)}| \) denotes the absolute value of the entries in \(\boldsymbol{A}^{(k-1)}\). 
\item[3.] If at some step \(r_\textnormal{c} \in [r]\) we have
\[
\max_{ij} |\boldsymbol{A}^{(r_\textnormal{c})}|_{ij} = 0 ,
\]
the procedure terminates. 
\end{itemize}
The resulting sequence \((i_1^\ast, j_1^\ast), \ldots, (i_{r_\textnormal{c}}^\ast, j_{r_\textnormal{c}}^\ast)\) is called the \emph{greedy maximum selection} of \(\boldsymbol{A}\).
\end{defn}

The permutation \(\sigma^\ast\) in Theorem~\ref{thm1} is determined by the greedy maximum selection applied to the following initialization matrix:
\begin{equation} \label{eq: initialization matrix}
\boldsymbol{I}_0 = \left (\lambda_i \lambda_j \left( m_{ij}^{(N)}(\boldsymbol{X}_0)\right)^{p-2} \mathbf{1}_{\left(m_{ij}^{(N)}(\boldsymbol{X}_0)\right)^{p-2} \geq 0} \right)_{1 \le i, j \le r}. 
\end{equation}
From this procedure we obtain a sequence of index pairs \((i_k^\ast, j_k^\ast)\), which specifies the correspondence between recovered and true spikes, i.e., \((\sigma^\ast(i), i) = (i_k^\ast, j_k^\ast)\). The matrix \(\boldsymbol{I}_0 \in \R^{r \times r}\) is random. Although in principle its nonzero entries may coincide due to randomness, Lemma~\ref{lem: separation initial data GF} ensures that they are distinct with probability \(1-o(1)\), making the greedy maximum selection of \(\boldsymbol{I}_0\) well-defined with high probability. We now present a more precise formulation of Theorem~\ref{thm1}.

\begin{thm} \label{thm: strong recovery GF p>2 asymptotic}
\leavevmode
\begin{itemize}
\item[(a)] If \(M = N^\alpha\) for \(\alpha > p-2\), then there exists a time \(T_0 \gg N^{\frac{p-2}{2}}\) such that for every \(\varepsilon >0\) and every \(T \ge T_0\),
\[
\lim_{N \to \infty} \mathbb{P}_{\boldsymbol{X}_0} \left( \inf_{t \in [T_0,T]} \, \lvert m_{i^\ast_1 j^\ast_1}^{(N)}(\boldsymbol{X}_t) \rvert \geq 1 - \varepsilon \right) = 1,
\]
where \((i_1^\ast, j_1^\ast)\) denotes the first index pair obtained via the greedy maximum selection applied to the initialization matrix \(\boldsymbol{I}_0\).
\item[(b)] If \(M \gg N^{p-1}\), then there exists a time \(T_0 \gg  N^{\frac{p-2}{2}}\) such that for every \(\varepsilon >0\), every \(T \ge T_0\), and every \(k\in [r_\textnormal{c}]\),
\[
\lim_{N \to \infty} \mathbb{P}_{\boldsymbol{X}_0} \left( \inf_{t \in [T_0,T]}  \, \lvert m_{i^\ast_k j^\ast_k}^{(N)}(\boldsymbol{X}_t) \rvert \geq 1 - \varepsilon \right) = 1,
\]
where \((i_k^\ast, j_k^\ast)\) denotes the \(k\)th index pair obtained via the greedy maximum selection applied to the initialization matrix \(\boldsymbol{I}_0\).
\end{itemize}
\end{thm}

\begin{rmk}
The index pairs \((i_k^\ast, j_k^\ast)\) depend on the random initialization \(\boldsymbol{X}_0\) through the greedy maximum selection applied to the matrix \(\boldsymbol{I}_0\). Consequently, they are random variables measurable with respect to \(\boldsymbol{X}_0\). The probability \(\mathbb{P}_{\boldsymbol{X}_0}\) naturally accounts for this randomness.
\end{rmk}

From statement (b) of Theorem~\ref{thm: strong recovery GF p>2 asymptotic} and the definition of the matrix \(\boldsymbol{I}_0\), we observe that if all correlations are positive at initialization or if $p$ is even, then \(r_\text{c} = r\), ensuring that all spikes are recovered up to a permutation. However, if we do not impose positivity constraints on the initialization or if \(p\) is odd, Theorem~\ref{thm: strong recovery GF p>2 asymptotic} guarantees recovery of a subset of the spikes, with cardinality $r_\text{c} \le r$. A key subtlety compared to Theorem~\ref{thm1} is that recovering the first spike requires a lower sample complexity than recovering all spikes. Specifically, item (a) states that recovery of the first spike requires \(M\) to scale as \(N^\alpha\) for \(\alpha > p-2\), matching the threshold obtained in the single-spike setting~\cite{arous2020algorithmic}, while recovering a subset of the spikes requires an order \(N^{p-1}\) samples. This difference in sample complexity arises from our proof method. During the initial phase of recovery, the noise term \(L_0 m_{ij}^{(N)}(\boldsymbol{X})\) is absorbed by the initial correlation \(m_{ij}(\boldsymbol{X}_0)\). This absorption reduces the noise scaling from order \(1\) to \(N^{-\frac{1}{2}}\),  thereby lowering the sample complexity required for recovering the first spike from \(N^{p-1}\) to \(N^{p-2}\). However, once the first spike has been recovered, this beneficial scaling no longer applies, and the noise is bounded by a constant of order \(1\). As a result, recovering the subsequent spikes requires \(N^{p-1}\) samples. In our companion paper~\cite{benarous2024}, we show that using the online SGD algorithm, the sample complexity threshold for permutation recovery matches the sharp threshold \(N^{p-2}\) obtained for \(r=1\)~\cite{arous2020algorithmic, arous2021online}. The difference in sample complexity between gradient flow and online SGD arises from the sample usage: online SGD uses independent samples at each iteration, allowing the sharp \(N^{p-2}\) scaling even for subsequent spikes.

The phenomenology underlying Theorem~\ref{thm: strong recovery GF p>2 asymptotic} is richer than the one presented by Theorem~\ref{thm: strong recovery GF p>2 asymptotic} itself. Indeed, based on the values of the entries of \(\boldsymbol{I}_0\), the correlations \(\{m_{i^\ast_k j^\ast_k}^{(N)}\}_{k=1}^{r_\text{c}}\) reach a macroscopic threshold one by one, sequentially eliminating the correlations that share a row or column index to allow the next correlation to grow to macroscopic. This phenomenon is referred to as \emph{sequential elimination} with ordering determined by the greedy maximum selection and is illustrated by Figures~\ref{fig1} and~\ref{fig2}.

\begin{defn}[Sequential elimination] \label{def: sequential elimination}
Let \(S = \{ (i_1, j_1), \ldots, (i_m,j_m)\}\) be a set with distinct \(i_1, \ldots, i_m \in [r]\) and distinct \(j_1, \ldots, j_m \in [r]\), where \(m \le r\). We say that the correlations \(\{m_{ij}(t)\}_{1 \leq i,j \leq m}\) follow a \emph{sequential elimination with ordering \(S\)} if for every \(\varepsilon, \varepsilon' > 0\), there exist \(m\) stopping times \(T_1 \leq \cdots \leq T_m\) such that for every \(k \in [m]\) and every \(T \ge T_k\),
\[
| m_{i_k j_k}(\boldsymbol{X}_T)| \geq 1 - \varepsilon  \quad \textnormal{and} \quad |m_{i_k j}(\boldsymbol{X}_T)| \leq \varepsilon', |m_{i j_k}(\boldsymbol{X}_T)| \leq \varepsilon' \enspace \textnormal{for} \: i\neq i_k, j \neq j_k.
\]
\end{defn}

Based on Definition~\ref{def: sequential elimination}, we have the following result, which serves as a foundation for Theorem~\ref{thm: strong recovery GF p>2 asymptotic}.

\begin{thm} \label{thm: strong recovery online p>2 asymptotic stronger}
If \(M \gg N^{p-1}\), then the correlations \(\{m_{ij}^{(N)}\}_{1 \le i, j \le r}\) follow a sequential elimination with ordering \(\{(i_k^\ast, j_k^\ast)\}_{k=1}^{r_\text{c}}\) and stopping times of order \(N^{\frac{p-2}{2}}\), with \(\mathbb{P}\)-probability \(1\) in the large-\(N\) limit.
\end{thm}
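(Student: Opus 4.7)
The plan is to proceed by induction on the recovery step $k = 1, \ldots, r$, analyzing the random ODE~\eqref{eq: GF} and tracking the full correlation matrix $(m_{ij}^{(N)}(\boldsymbol{X}_t))_{i,j}$. The dynamics is a random ODE: it is deterministic conditional on the noise tensors $(\boldsymbol{W}^\ell)_\ell$, but all estimates below need to hold on an event of $\mathbb{P}$-probability $1 - o(1)$ over those tensors. Applying the chain rule with the Riemannian gradient~\eqref{eq: riemannian gradient on normalized stiefel} and using $\boldsymbol{v}_i^\top \boldsymbol{x}_j = \sqrt{N}\,m_{ij}^{(N)}$ together with the orthogonality $\langle \boldsymbol{v}_i, \boldsymbol{v}_{i'}\rangle = N\delta_{ii'}$, each correlation satisfies an evolution equation of the schematic form
\[
\frac{d}{dt} m_{ij}^{(N)}(\boldsymbol{X}_t) = c\, \lambda_i \lambda_j \bigl(m_{ij}^{(N)}(\boldsymbol{X}_t)\bigr)^{p-1} + (\text{Stiefel cross-terms in other row/column entries}) + L_0 m_{ij}^{(N)}(\boldsymbol{X}_t),
\]
with $L_0$ the noise generator~\eqref{eq: generator GF noise} and $c$ an explicit dimension-dependent constant absorbed in the stated timescale. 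Under $M \gg N^{p-1}$ the noise contribution is pointwise small, but because $L_0 m_{ij}^{(N)}$ is not a martingale along the trajectory, uniform control over time intervals of length $\asymp N^{(p-2)/2}$ must be obtained by Gaussian concentration for the noise Hamiltonian $H_0$ (uniformly on tubular neighborhoods of $\mathcal{M}_{N,r}$) combined with a localization and bootstrap argument in the spirit of~\cite{ben2020bounding,arous2020algorithmic}.

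On the good noise event, the first phase is governed by the autonomous deterministic approximation of the display above. Near the initialization, the instantaneous amplification rate of $m_{ij}^{(N)}$ scales precisely as the $(i,j)$-entry of the matrix $\boldsymbol{I}_0$ from~\eqref{eq: initialization matrix}. Lemma~\ref{lem: separation initial data GF} ensures these entries are distinct with probability $1 - o(1)$, so the pair $(i_1^\ast, j_1^\ast)$ realizing the greedy maximum of $\boldsymbol{I}_0$ escapes first; a direct integration of the power-$(p-1)$ autonomous drift from $m_{ij}(\boldsymbol{X}_0) = O(N^{-1/2})$ produces a crossing time $T_1 \asymp N^{(p-2)/2}$. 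Once $|m_{i_1^\ast j_1^\ast}^{(N)}|$ is within $\varepsilon$ of $1$, the Stiefel constraint $\boldsymbol{X}^\top\boldsymbol{X} = N\boldsymbol{I}_r$ forces $|m_{i j_1^\ast}^{(N)}| = o(1)$ for $i \neq i_1^\ast$, while the spike orthogonality $\boldsymbol{v}_{i_1^\ast} \perp \boldsymbol{v}_i$ forces $|m_{i_1^\ast j}^{(N)}| = o(1)$ for $j \neq j_1^\ast$, delivering the first elimination in the sense of Definition~\ref{def: sequential elimination}.

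For the inductive step, starting from the configuration at time $T_{k-1}$ in which $\boldsymbol{x}_{j_1^\ast}, \ldots, \boldsymbol{x}_{j_{k-1}^\ast}$ are aligned with $\boldsymbol{v}_{i_1^\ast}, \ldots, \boldsymbol{v}_{i_{k-1}^\ast}$ up to $o(1)$, I would decompose $\R^N$ into the span of the already-aligned spikes and its orthogonal complement, reducing the effective dynamics on the remaining columns to a random gradient flow on the smaller Stiefel manifold $\mathcal{M}_{N-k+1, r-k+1}$. The closure property I need is that the surviving $(r-k+1) \times (r-k+1)$ submatrix of small correlations at time $T_{k-1}$ still has entries of order $N^{-1/2}$ whose relative ordering of amplification rates inherits, up to controllable multiplicative factors, the ordering of the corresponding submatrix of $\boldsymbol{I}_0$. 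Hence the greedy selection on that residual submatrix identifies exactly $(i_k^\ast, j_k^\ast)$, and the phase-$1$ analysis delivers the next stopping time $T_k - T_{k-1} \asymp N^{(p-2)/2}$.

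The central technical obstacle will be the uniform noise control across all $r$ phases. Because $L_0 m_{ij}^{(N)}$ has no martingale structure in the gradient flow setting (in contrast with the online SGD companion paper~\cite{benarous2024}, where Doob's inequality applies), I expect to rely on Gaussian concentration for $H_0$ on tubular neighborhoods of $\mathcal{M}_{N,r}$ together with a chained stopping-time/bootstrap argument to certify that each $m_{ij}^{(N)}$ remains within an $o(1)$ tube of its predicted autonomous trajectory over the full horizon $\sum_k T_k$. A secondary subtlety is to propagate the separation of entries of $\boldsymbol{I}_0$ through the induction: one must show that the random multiplicative drift factors accumulated during phases $1, \ldots, k-1$ do not collapse the ordering of the residual submatrix, so that the greedy maximum selection remains well-defined at every step.
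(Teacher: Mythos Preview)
Your outline is broadly correct and identifies the right induction structure, the power-$(p-1)$ drift ODE, the role of the greedy selection, and the crucial ``closure property'' that the surviving submatrix stays at scale $N^{-1/2}$ with its ordering intact. The paper's route differs from yours in two substantive ways.

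First, the noise control in this regime is simpler than you suggest. For $M \gg N^{p-1}$ the paper does \emph{not} invoke the bounding-flows machinery of~\cite{ben2020bounding,arous2020algorithmic}; it uses only the crude uniform bound $\|L_0 m_{ij}\|_\infty \le \Lambda$ coming from the $\mathcal{G}^1$-regularity of $H_0$ (Lemma~\ref{lem: regularity H0} and~\eqref{eq: bound norm L_0m_ij}). Since $\sqrt{M}\,p\lambda_i\lambda_j m_{ij}^{p-1} \gtrsim N^{(p-1)/2} \cdot N^{-(p-1)/2}$ is of order $1$ already at initialization, the signal drift dominates this $O(1)$ noise immediately and uniformly in time. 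The bounding-flows argument is reserved for the sharper first-spike threshold $M \gg N^{p-2}$ (Proposition~\ref{thm: strong recovery first spike GF p>2}), where one must exploit that $L_0 m_{ij}(\boldsymbol{X}_0)=O(N^{-1/2})$ rather than $O(1)$. So what you call the ``central technical obstacle'' is largely absent here.

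Second, the paper does not reduce to a smaller Stiefel manifold. It stays on $\mathcal{M}_{N,r}$ throughout and instead introduces a nested chain of events $E_1(\varepsilon),\ldots,E_r(\varepsilon)$ (see the proof of Proposition~\ref{thm: strong recovery all spikes GF p>2}): on $E_{k-1}$ the first $k-1$ correlations are macroscopic, the off-diagonal entries in their rows and columns have been driven by the correction term of the generator down to the explicit scale $\log(N)^{-1/2} N^{-(p-1)/4}$, and the remaining $(r-k+1)^2$ correlations are still $\Theta(N^{-1/2})$. On that event the cross-terms in $L m_{ij}$ from the already-aligned columns are negligible, and the residual correlations obey the same comparison inequalities as in the first phase, so the argument repeats. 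Your dimension-reduction idea is a plausible alternative, but you would still have to prove the stability of the aligned columns and that the restricted dynamics is again a gradient flow of the same form; the paper sidesteps this by working directly with the full generator.

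A minor point: you have the two elimination mechanisms swapped. That $|m_{i j_1^\ast}|=o(1)$ for $i\neq i_1^\ast$ uses the spike orthogonality ($\boldsymbol{x}_{j_1^\ast}\approx \boldsymbol{v}_{i_1^\ast}\perp \boldsymbol{v}_i$), while $|m_{i_1^\ast j}|=o(1)$ for $j\neq j_1^\ast$ uses the Stiefel column orthogonality ($\boldsymbol{x}_j\perp \boldsymbol{x}_{j_1^\ast}\approx \boldsymbol{v}_{i_1^\ast}$). The paper in fact proves these decays \emph{dynamically} via the correction term in the generator, which is what yields the precise small scale needed for the next step of the induction.
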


\begin{figure}
\centering
\includegraphics[scale=0.3]{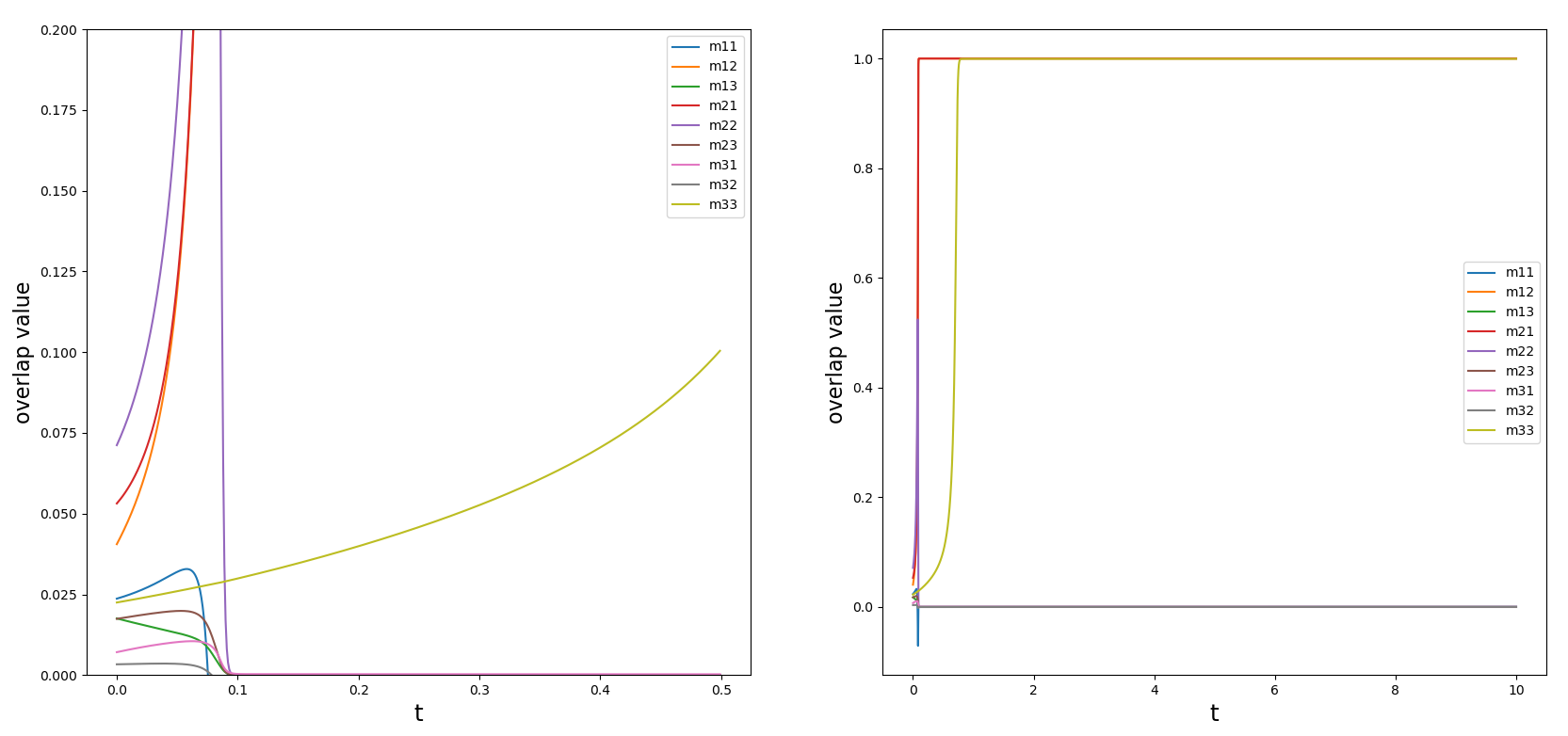}
\caption{Evolution of the correlations \(m_{ij}\) under gradient flow for the case where \(p=3, r=3\), with SNRs \(\lambda_1 = 3, \lambda_2 =2, \lambda_3 = 1\). The simulation is performed with \(M = 1000\) samples and a dimension of \(N=1000\). The simulation shows recovery of a permutation of the spikes.}
\label{fig1}
\end{figure}

\begin{figure}
\centering
\includegraphics[scale=0.3]{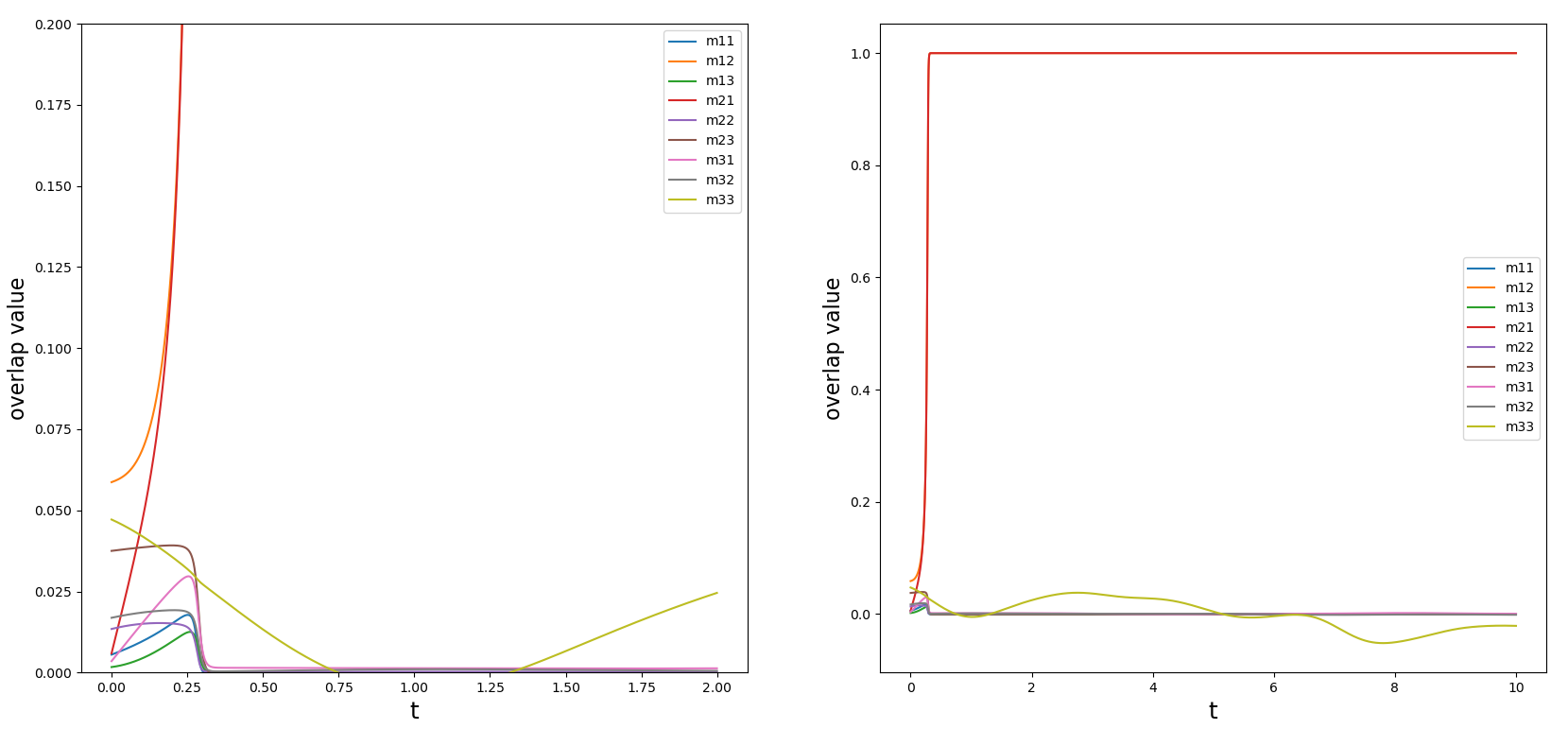}
\caption{Evolution of the correlations \(m_{ij}\) under gradient flow for the case where \(p=3, r=3\), with SNRs \(\lambda_1 = 2, \lambda_2 =1, \lambda_3 = 0.1\). The simulation is performed with \(M = 1000\) samples and a dimension of \(N=1000\). The first two directions are successfully recovered, while the third direction, associated with the lowest SNR, is lost in the noise and remains unrecovered.}
\label{fig2}
\end{figure}

\begin{rmk} \label{rmk: sign initialization}
It is important to note that in the above results, the behavior of gradient flow depends on the parity of integer \(p\). When \(p\) is odd, then each estimator \(\boldsymbol{x}_{j_k^\ast}\) recovers the spike \(\boldsymbol{v}_{i_k^\ast}\) with \(\mathbb{P}\)-probability \(1-o(1)\), since the correlations that are negative at initialization get trapped at the equator. Conversely, when \(p\) is even, we have that each estimator \(\boldsymbol{x}_{j_k^\ast}\) recovers \(\textnormal{sgn}(m_{i_k^\ast j_k^\ast}(\boldsymbol{X}_0)) \boldsymbol{v}_{i_k^\ast j_k^\ast}\) with probability \(1-o(1)\). This means that if the correlation at initialization is positive, then \(\boldsymbol{x}_{j_k^\ast}\) recovers \(\boldsymbol{v}_{i_k^\ast}\); otherwise, \(\boldsymbol{x}_{j_k^\ast}\) recovers \(-\boldsymbol{v}_{i_k^\ast}\).
\end{rmk}

\begin{rmk} 
In our companion paper~\cite{benarous2024langevin}, we also analyze Langevin dynamics in the matrix case ($p=2$), distinguishing between two scenarios: when the SNRs are separated by order-$1$ constants and when the SNRs are all equal. In the former case, we establish exact recovery of all spikes, whereas in the latter, we recover the subspace spanned by all spikes. For details, we refer readers to~\cite[Theorems 1.10, 1.11, and 1.12]{benarous2024langevin}. Since gradient flow is a special case of Langevin dynamics, these results naturally extend to the gradient flow setting and are therefore not presented in this article.
\end{rmk}
\subsection{Related works} \label{subsection: related work}

The tensor PCA problem~\eqref{eq: spiked tensor model}, originally introduced for matrices by Johnstone~\cite{Johnstone} and later extended to tensors by Richard and Montanari~\cite{MontanariRichard}, provides a fundamental framework for analyzing optimization in high-dimensional, nonconvex landscapes using gradient-based methods. The case \(r=1\) has been extensively studied, with particular focus on various threshold phenomena. In particular, the \emph{information-theoretic threshold} for signal detection has been the subject of significant research, with notable contributions including~\cite{lesieur2017, perry2018optimality, bandeira2020, chen2019, jagannath2020, dominik2024}. The \emph{statistical threshold}, which validates the maximum likelihood estimator (MLE) as a reliable statistical method, has been analyzed in~\cite{benarouscomplexity2019, ros2019, jagannath2020}. From a \emph{computational} perspective, spectral methods and sum-of-squares algorithms have been shown to achieve the sharp sample complexity threshold \(N^\frac{p-2}{2}\)~\cite{Hopkins15, Hopkins16, Bandeira2017, Weinkikuchi, BenArousUnfolding}. In contrast, gradient-based methods~\cite{arous2020algorithmic, arous2021online} and tensor power iteration~\cite{HuangPCA, Wu24} reach the computational threshold of \(N^{p-2}\). In particular, the latter work~\cite{Wu24} provides the state-of-the-art threshold, showing that the required number of samples scales as \(N^{p-2} \log(N)^{-C}\), where \(C\) is a constant depending on the tensor order \(p\). For the multi-rank tensor PCA model, both detection and recovery thresholds have been studied. On the information-theoretic side, it has been shown that for \(p=2\)~\cite{lelarge2019fundamental} and for \(p \ge 3\)~\cite{chen2021}, there is an order-\(1\) critical threshold for the SNRs, above which it is possible to detect the unseen low-rank signal tensor \(\sqrt{N} \sum_{i=1}^r \lambda_i \boldsymbol{v}_i^{\otimes p}\). On the algorithmic side,~\cite{HuangPCA} analyzed the power iteration algorithm and identified the local threshold for efficiently recovering the finite-rank signal components. In our companion paper~\cite{benarous2024}, we analyze the discretization of gradient flow in the form of online SGD and show that it achieves the same algorithmic threshold of \(N^{p-2}\) as in the single-spike case~\cite{arous2021online}. 

The multi-spiked tensor PCA problem serves as both a paradigmatic example of high-dimensional, nonconvex optimization and a key illustration of \emph{statistical-to-computational gaps}. While various techniques from the statistical physics of spin glasses and statistical learning theroy have been applied to study gradient flows in disordered systems, these methods prove insufficient for the current problem. In particular, they fail to capture sharp sample complexity thresholds and do not precisely characterize the minimizers reached by gradient flow. Further discussion of these limitations can be found in the related works section of our companion paper on Langevin dynamics~\cite{benarous2024langevin}. Additionally, the relevance of this problem to machine learning theory is explored in Subsection 1.3 of our companion paper on online SGD~\cite{benarous2024}.
\subsection{Outline of proofs} \label{subsection: outline proofs}

We now outline the proof of our main results. A similar explanation is presented in our companion paper~\cite{benarous2024langevin}, which focuses on Langevin dynamics, a broader framework within which gradient flow serves as a special case. To prove our main results, we analyze the evolution of the correlations \(\{m_{ij}^{(N)}\}_{i=1}^r\) under gradient flow~\eqref{eq: GF}. We assume an initial random start with a completely uninformative prior, specifically the invariant distribution on \(\mathcal{S}_{N,r}\). As a consequence, all correlations \(m_{ij}^{(N)}\) have the typical scale of order \(N^{-\frac{1}{2}}\) at initialization. For simplicity, we assume that all correlations are positive at initialization. Additionally, to streamline notation, we write \(m_{ij}\) instead of \(m_{ij}^{(N)}\) in the following discussion.

According to~\eqref{eq: GF}, the evolution equation for the correlations \(m_{ij}(\boldsymbol{X}_t)\) under gradient flow dynamics is governed by
\[
\frac{\text{d}m_{ij} (\boldsymbol{X}_t)}{\text{d} t} = - \frac{1}{N} \left \langle \boldsymbol{v}_i, \left (\nabla \mathcal{R}(\boldsymbol{X}_t )\right )_j \right \rangle,
\] 
where \( \left (\nabla \mathcal{R}(\boldsymbol{X}_t )\right )_j\) denotes the \(j\)th column of the Riemannian gradient \(\nabla \mathcal{R}\), and \(\mathcal{R}\) is the the empirical risk defined in~\eqref{eq: empirical risk}. Using the definition of the generator \(L\) from~\eqref{eq: generator GF}, the gradient flow dynamics can be rewritten as
\[
\frac{\text{d}m_{ij} (\boldsymbol{X}_t)}{\text{d} t}  = - \left \langle \nabla \mathcal{R}, \hat{\nabla} m_{ij} \right \rangle = L m_{ij}.
\]
A direct computation of the Riemannian gradient \(\nabla \mathcal{R}\) yields the following decomposition:
\[
L m_{ij} = L_0 m_{ij} + p \lambda_i \lambda_j m_{ij}^{p-1}  -  \frac{p}{2} \sum_{1 \leq k, \ell \leq r} \lambda_k m_{kj} m_{k \ell} m_{i \ell}\left(\lambda_j m_{kj}^{p-2} + \lambda_\ell m_{k \ell}^{p-2} \right) ,
\]
where \(L_0\) is the noise generator defined in~\eqref{eq: generator GF noise}. The second term, \(p \lambda_i \lambda_j m_{ij}^{p-1}\), corresponds to the primary drift and dominates the dynamics, particularly near initialization. The third term represents a correction arising from the orthogonality constraint on the estimator \(\boldsymbol{X}\) being on the normalized Stiefel manifold, and becomes increasingly relevant as the dynamics evolve and the correlations escape their initial scale. 

The main challenge lies in balancing the signal and noise contributions to the dynamics. At early times---such as near initialization---the population drift predominates over the correction term, allowing the approximation
\[
Lm_{ij} \approx L_0 m_{ij} + p \lambda_i \lambda_j m_{ij}^{p-1}.
\]
For the correlations \(m_{ij}\) to grow, the drift term \( p \lambda_i \lambda_j m_{ij}^{p-1}\) must exceed the noise term \(L_0 m_{ij}\). Since \(m_{ij}\) typically scales as \(N^{-\frac{1}{2}}\) at initialization, it follows that \(m_{ij}^{p-1}\) is of order \(N^{\frac{p-1}{2}}\). Meanwhile, the noise term \(L_0 m_{ij}\) is of order \(N^{-\frac{1}{2}}\), implying that a sample complexity \(M = \Theta (N^{p-2})\) suffices for the drift to dominate. Under this sample complexity, the dynamics in this first phase is well approximated by the simple ordinary differential equation (ODE):
\begin{equation} \label{eq: simple ODE}
\dot{m}_{ij} \approx p \lambda_i \lambda_j m^{p-1}_{ij}.
\end{equation}
To ensure sustained signal growth, it is crucial that the drift term continues to outweigh the noise \(L_0 m_{ij}\) over a sufficiently long time horizon. This allows \(m_{ij}\) to \emph{escape mediocrity}, that is, to reach a macroscopic threshold. Bounding flows~\cite{ben2020bounding, arous2020algorithmic} address this by providing time-dependent upper bounds on the noise term, using Sobolev-type norm estimates of \(H_0(\boldsymbol{X})\) to control the evolution of correlations throughout this early phase.\\

We now focus on the population dynamics. The solution to~\eqref{eq: simple ODE} shows that, near initialization, the correlations \(m_{ij}\) are approximately given by
\begin{equation} \label{eq: simple sol p>3}
m_{ij}(t) \approx m_{ij}(0)\left(1-\lambda_i \lambda_j p(p-2) m_{ij}(0)^{p-2} t\right)^{-\frac{1}{p-2}},
\end{equation}
where \(m_{ij}(0) = \frac{\gamma_{ij}}{\sqrt{N}}\) for some constants \(\gamma_{ij}\) of order \(1\). From this expression, we see that the time it takes for \(m_{ij}\) to reach a macroscopic threshold \(\varepsilon > 0\) is approximately
\[
T_\varepsilon^{(ij)} \approx \frac{1- \left (\frac{\gamma_{ij}}{\varepsilon \sqrt{N}} \right)^{p-2}}{\lambda_i \lambda_j p (p-2) \gamma_{ij}^{p-2}} N^{\frac{p-2}{2}}.
\]
Consequently, the first correlation to become macroscopic is the one associated with the largest value of \(\lambda_i \lambda_j \gamma_{ij}^{p-2}\). Note that \((\lambda_i \lambda_j \gamma_{ij}^{p-2})_{1 \le i,j \le r}\) is precisely the initialization matrix \(\boldsymbol{I}_0\) introduced in~\eqref{eq: initialization matrix}, as here we have assumed that all initial correlations are positive. 

To simplify the discussion, we now assume \(r=2\). Without loss of generality, suppose that \(m_{11}\) is the first correlation to reach the macroscopic threshold \(\varepsilon\). Once \(m_{11}\) crosses a critical value, the remaining correlations remain close to their initialization scale. More precisely, as soon as \(m_{11}\) exceeds the microscopic threshold \(N^{-{\frac{p-2}{2p}}}\), the correction term in the population generator,
\[
\sum_{1 \le k, \ell \le 2} \lambda_k m_{kj} m_{k \ell} m_{i \ell} (\lambda_j m_{kj}^{p-2} + \lambda_\ell m_{k \ell}^{p-2}),
\]
becomes dominant in the evolution equations for \(m_{12}\) and \(m_{21}\), driving them to decrease. Similarly, this correction term may also dominate the dynamics of \(m_{22}\) once \(m_{11}\) exceeds a finer microscopic threshold of order \(N^{-{\frac{p-3}{2(p-1)}}}\), potentially inducing a decrease in \(m_{22}\) as well. A careful analysis shows that any such decrease in \(m_{22}\) is at most of order \(\frac{\log(N)}{N}\), so \(m_{22}\) remains stable at its initialization scale \(\mathcal{O}(N^{-1/2}\) during the growth of \(m_{11}\). Once \(m_{12}\) and \(m_{21}\) become sufficiently small, \(m_{22}\) evolves according to the same population ODE~\eqref{eq: simple sol p>3}, enabling the recovery of the second signal direction. 

This stepwise progression is referred to as the \emph{sequential elimination phenomenon}: when a correlation (e.g., \(m_{11}\)) crosses a critical threshold, the correlations in the same row or column (e.g., \(m_{12}, m_{21}\)) are suppressed, which in turn allows subsequent correlations (e.g., \(m_{22}\)) to grow. This behavior is illustrated in Figures~\ref{fig1} and~\ref{fig2}. Finally, if the SNRs are sufficiently separated, the algorithm achieves exact recovery of the unknown signal directions with high probability, as shown in~\cite{benarous2024langevin}. Otherwise, the result is a permutation of the signal components, determined by a greedy maximum selection (see Definition~\ref{def: greedy operation}) on the initialization matrix \(\boldsymbol{I}_0\).

\subsection{Overview}

An overview of the paper is as follows. 
Section~\ref{section: main results} presents the nonasymptotic formulations of the main results introduced in Subsection 1.3, stated under general initialization conditions. Section~\ref{preliminary} provides the necessary preliminary results for the proofs. These results are drawn from our companion paper~\cite[Section 4]{benarous2024langevin}, and their proofs are therefore deferred to that reference. Section~\ref{proofs} contains the proofs of our main results. Finally, Appendix~\ref{appendix: invariant measure} concludes the paper with concentration results for the uniform measure on the normalized Stiefel manifold \(\mathcal{S}_{N,r}\).\\

\textbf{Acknowledgements.}\ G.B.\ and C.G.\ acknowledge the support of the NSF grant DMS-2134216. V.P.\ acknowledges the support of the ERC Advanced Grant LDRaM No.\ 884584.

\section{Main results} \label{section: main results}

This section presents the nonasymptotic versions of our main results stated in Subsection~\ref{subsection: main results}. These nonasymptotic versions are stronger, as they explicitly provide all constants and convergence rates. Moreover, the asymptotic results from Subsection~\ref{subsection: main results} follow directly as corollaries of these nonasymptotic statements.

According to the definition of gradient flow dynamics given in Subsection~\ref{subsection: algorithm}, we consider \(\boldsymbol{X}_t \in \mathcal{S}_{N,r}\) as the solution to the ordinary differential equation
\begin{equation} \label{eq: GF 2}
\frac{\text{d} \boldsymbol{X}_t}{\textnormal{d}t} = - \nabla \mathcal{R}(\boldsymbol{X}_t),
\end{equation}
where the empirical risk \(\mathcal{R}\) is given in~\eqref{eq: empirical risk}. We observe that~\eqref{eq: GF 2} is equivalent to studying the solution \(\boldsymbol{X}_t \in \mathcal{S}_{N,r}\) of
\begin{equation} \label{eq: gradient flow 2}
\frac{\textnormal{d} \boldsymbol{X}_t}{\textnormal{d}t} = - \nabla H(\boldsymbol{X}_t),
\end{equation}
where the Hamiltonian \(H \colon \mathcal{S}_{N,r} \to \R\) is defined as \(H(\boldsymbol{X}) = \sqrt{M} \mathcal{R}(\boldsymbol{X})\). Indeed, multiplying by a factor of \(\sqrt{M}\) changes the timescale of the dynamics but not the nature of the dynamics itself. Specifically, the gradient flow dynamics~\eqref{eq: GF 2} results in
\[
\frac{\text{d} \boldsymbol{X}_t}{\text{d}t} = - \nabla \mathcal{R}(\boldsymbol{X}_t)  =  - \frac{1}{\sqrt{M}} \nabla H(\boldsymbol{X}_t),
\]
and introducing a new timescale \(\tau = \frac{t}{\sqrt{M}}\) yields
\[
\frac{\text{d} \boldsymbol{X}_{\tau \sqrt{M}}}{\text{d} \tau}  = -  \nabla H(\boldsymbol{X}_{\tau \sqrt{M}}).
\]
Thus, the only difference is that this rescaled dynamics speeds up the process, reducing the runtime by the factor \(\sqrt{M}\). The advantage of studying gradient dynamics with Hamiltonian \(H\) is that we can build on the results obtained with Langevin dynamics of our companion work~\cite{benarous2024langevin}. From this point onward, we consider the gradient flow \(\boldsymbol{X}_t\) as the solution to~\eqref{eq: gradient flow 2}.

\subsection{Initial conditions}

As discussed in Subsection~\ref{subsection: main results}, we consider the gradient flow dynamics initialized from a random point \(\boldsymbol{X}_0\) drawn according to the uniform measure \(\mu_{N \times r}\) on \(\mathcal{S}_{N,r}\). Our recovery guarantees extend beyond this uniform initialization to a broader class of random initial data, provided certain natural conditions are satisfied. Let \(\mathcal{M}_1 (\mathcal{S}_{N,r})\) denote the space of probability measures on \(\mathcal{S}_{N,r}\). Then, a choice of initialization corresponds to a choice of measure \(\mu_N \in \mathcal{M}_1 (\mathcal{S}_{N,r})\). We now introduce the conditions under which our guarantees continue to hold. The first condition ensures that the initial correlations are on the typical scale of order \(\Theta(N^{-\frac{1}{2}})\).
    
\begin{defn}[Condition 1]\label{def: condition 1 GF}
For every \(N \in \N\) and every \(\gamma_1 > \gamma_2 >0\), define
\[
\mathcal{C}_1^{(N)} (\gamma_1,\gamma_2) = \left \{ \boldsymbol{X} \in \mathcal{S}_{N,r} \colon \frac{\gamma_2}{\sqrt{N}} \leq m_{ij}^{(N)}(\boldsymbol{X}) < \frac{\gamma_1}{\sqrt{N}} \quad \text{for all} \enspace 1 \leq i,j \leq r\right \}.
\]
We say that a sequence of random probability measures \(\mu_N \in \mathcal{M}_1(\mathcal{S}_{N,r})\) satisfies \emph{Condition 1} if for every \(N \in \N\) and \(\gamma_1 > \gamma_2 > 0\),
\[ 
\mu_N \left ( \mathcal{C}_1^{(N)}(\gamma_1,\gamma_2) ^\textnormal{c} \right ) \leq C_1 e^{-c_1 \gamma_1^2} + C_2 e^{-c_2 \gamma_2 \sqrt{N}} + C_3 \gamma_2,
\]
where \(C_1, c_1, C_2, c_2, C_3 >0\) are absolute constants independent of \(N\).
\end{defn} 
    
The second condition ensures that the initial correlations, weighted by their associated SNRs, are sufficiently separated across index pairs.
    
\begin{defn}[Condition 2] \label{def: condition 2 GF}
For every \(N \in \N\) and every \(\gamma_1 > \gamma_3 >0\),define
\[
\mathcal{C}_2^{(N)}(\gamma_1, \gamma_3) = \left \{\boldsymbol{X} \in \mathcal{S}_{N,r} \colon \left | \frac{\lambda_i \lambda_j m_{ij}^{(N)}(\boldsymbol{X})^{p-2}}{\lambda_k \lambda_\ell m_{k \ell}^{(N)}(\boldsymbol{X})^{p-2}} - 1 \right | > \frac{\gamma_3}{\gamma_1} \enspace \text{for every} \enspace 1 \leq i,j,k,\ell \leq r, (i,j) \neq (k,\ell) \right \}.
\]
We say that a sequence of random probability measures \(\mu_N \in \mathcal{M}_1(\mathcal{S}_{N,r})\) satisfies \emph{Condition 2} if for every \(N \in \N\) and every \(\gamma_1 > \gamma_3 >0\), 
\[ 
\mu_N \left ( \mathcal{C}_2^{(N)}(\gamma_1,\gamma_3)^\textnormal{c} \right ) \leq C_1 e^{- c_1 \gamma_1^2} + C_2 e^{- c_2 \sqrt{N} \gamma_3} + C_3 \sup_{i,j,k,\ell} \left ( 1 + \left( \frac{\lambda_i \lambda_j}{\lambda_k \lambda_\ell} \right)^{\frac{2}{p-2}} \right)^{-\frac{1}{2}} \gamma_3,
\]
where \(C_1, c_1, C_2, c_2, C_3 >0\) are absolute constants independent of \(N\).
\end{defn}

We also need a further condition on the regularity of the noise generator \(L_0\).

\begin{defn}[Condition 0 at level \(n\)] \label{def: condition 0 GF}
For every \(N \in \N\), every \(\gamma_0 >0\), and every \(n\geq 1\), define
\begin{equation*} \label{eq: regularity noise generator}
\mathcal{C}_0^{(N)} (n,\gamma_0) = \bigcap_{k=0}^{n-1} \left \{ \boldsymbol{X} \in \mathcal{S}_{N,r} \colon |L_0 ^k m_{ij}^{(N)}(\boldsymbol{X})| \leq \frac{\gamma_0}{\sqrt{N}}  \enspace \text{for every} \enspace 1 \leq i,j \leq r\right \}.
\end{equation*}
We say that a sequence of random probability measures \(\mu_N \in \mathcal{M}_1(\mathcal{S}_{N,r})\) satisfies \emph{Condition 0 at level \(n\)} if for every \(N \in \N\) and every \(\gamma_0 > 0\),
\begin{equation*} 
\mu_N \left ( \mathcal{C}_0^{(N)} (n,\gamma_0)^\textnormal{c} \right) \leq C e^{-c\gamma_0^2},
\end{equation*}
where \(C,c >0\) are absolute constants independent of \(N\).
\end{defn}

\begin{defn}[Condition 0 at level \(\infty\)] \label{def: condition 0 infty GF}
For every \(N \in \N\), every \(\gamma_0 >0\), and every \(T >0\), define
\begin{equation*} \label{eq: regularity initial data infty}
\mathcal{C}_0^{(\infty,N)} (T,\gamma_0) = \left \{ \boldsymbol{X} \in \mathcal{S}_{N,r} \colon \sup_{t \le T} |e^{t L_0} L_0 m_{ij}^{(N)}(\boldsymbol{X})| \leq \frac{\gamma_0}{\sqrt{N}}  \enspace \text{for every} \enspace 1 \leq i,j \leq r\right \},
\end{equation*}
where \(e^{t L_0}\) denotes the semigroup generated by the operator \(L_0\). We say that a sequence of random probability measures \(\mu_N \in \mathcal{M}_1(\mathcal{S}_{N,r})\) \emph{weakly satisfies Condition 0 at level \(\infty\)} if for every \(N \in \N\), \(\gamma_0\), and \(T > 0\),
\begin{equation*} 
\mu_N \left (\mathcal{C}_0^{(\infty,N)}(T,\gamma_0)^\textnormal{c} \right) \leq C \sqrt{N}T e^{-c\gamma_0^2},
\end{equation*}
where \(C,c >0\) are absolute constants independent of \(N\).
\end{defn}

The most natural initialization is the uniform measure \(\mu_{N \times r}\) on \(\mathcal{S}_{N,r}\). We claim that
\begin{lem} \label{lem: invariant measure GF}
The uniform measure \(\mu_{N \times r}\) on \(\mathcal{S}_{N,r}\) weakly satisfies Condition 0 at level \(\infty\), and satisfies Condition 1 and Condition 2.  
\end{lem}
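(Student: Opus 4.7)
My plan is to exploit the orthogonal invariance of $\mu_{N\times r}$ to reduce each condition to a Gaussian computation on the entries of a uniformly random point on $\mathcal{M}_{N,r}$, and to handle the time supremum in Condition~0 at level~$\infty$ by discretization.

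For Conditions~1 and~2, the left-orthogonal invariance of $\mu_{N\times r}$ lets me take $\boldsymbol{v}_i = \sqrt{N}\boldsymbol{e}_i$, so that $m_{ij}^{(N)}(\boldsymbol{X}) = (\boldsymbol{X})_{ij}$. Using the representation $\boldsymbol{X} = \sqrt{N}\,\boldsymbol{Z}(\boldsymbol{Z}^\top\boldsymbol{Z})^{-1/2}$ with $\boldsymbol{Z}\in\R^{N\times r}$ having i.i.d.\ $\mathcal{N}(0,1)$ entries, the variable $\sqrt{N}\,m_{ij}^{(N)}$ equals $Z_{ij}$ up to a multiplicative factor $1+O(N^{-1/2})$ on an event of probability $1-O(e^{-cN})$, by spectral concentration of $\boldsymbol{Z}^\top\boldsymbol{Z}/N$. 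Working up to sign (the regime relevant per the discussion preceding the lemma), Condition~1 follows by combining the Gaussian upper tail $\mathbb{P}(|Z_{ij}|\geq\gamma_1)\leq Ce^{-c\gamma_1^2}$, the density bound $\mathbb{P}(|Z_{ij}|<\gamma_2)\leq C\gamma_2$, the Stiefel--Gaussian comparison error (absorbed into $C_2 e^{-c_2\gamma_2\sqrt{N}}$), and a union bound over $(i,j)$. Condition~2 reduces, in the same way, to anti-concentration of $\lambda_i\lambda_j Z_{ij}^{p-2}-\lambda_k\lambda_\ell Z_{k\ell}^{p-2}$ near zero: integrating the joint Gaussian density on the strip where this difference has magnitude at most $\gamma$ and changing variables to $u=\lambda_i\lambda_j Z_{ij}^{p-2}$, $v=\lambda_k\lambda_\ell Z_{k\ell}^{p-2}$ yields the factor $(1+(\lambda_i\lambda_j/\lambda_k\lambda_\ell)^{2/(p-2)})^{-1/2}\gamma$ through the Jacobian, while the Gaussian upper tails absorb the remaining contributions.

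For Condition~0 at level~$\infty$, I use the identity $e^{tL_0}L_0 m_{ij}^{(N)}(\boldsymbol{X}_0) = L_0 m_{ij}^{(N)}(\boldsymbol{X}_t^{(0)})$, where $\boldsymbol{X}_t^{(0)}$ is the noise-only gradient flow starting at $\boldsymbol{X}_0$. For each fixed $\boldsymbol{X}\in\mathcal{M}_{N,r}$, the functional $L_0 m_{ij}^{(N)}(\boldsymbol{X}) = -M^{-1/2}\langle\nabla H_0(\boldsymbol{X}),\hat{\nabla}m_{ij}^{(N)}\rangle$ is centered Gaussian in $\boldsymbol{W}$ with variance bounded by $C/N$ (in the regime $M\gtrsim N$), via direct covariance computation from~\eqref{eq: Hamiltonian H0}. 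I then discretize $[0,T]$ into $K = \lceil\sqrt{N}T\rceil$ intervals of length at most $1/\sqrt{N}$: at each grid time $t_k$ the Gaussian bound gives $\mathbb{P}(|L_0 m_{ij}^{(N)}(\boldsymbol{X}_{t_k}^{(0)})| > \gamma_0/(2\sqrt{N})) \leq Ce^{-c\gamma_0^2}$, and a union bound over the $Kr^2$ events contributes the prefactor $C\sqrt{N}T$.

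The main obstacle is controlling the oscillation of $t\mapsto L_0 m_{ij}^{(N)}(\boldsymbol{X}_t^{(0)})$ on each subinterval of length $1/\sqrt{N}$: its derivative along the flow is $L_0^2 m_{ij}^{(N)}(\boldsymbol{X}_t^{(0)})$, a second-order differential quantity of $H_0$ rescaled by $1/M$, which must be uniformly bounded on $[0,T]$ so that each subinterval's oscillation stays below $\gamma_0/(2\sqrt{N})$. This is achieved through Sobolev-type regularity estimates for the Gaussian field $H_0$ on the Stiefel manifold, following the bounding-flows framework of~\cite{ben2020bounding,arous2020algorithmic} invoked in the companion paper. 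Combining these regularity estimates with the pointwise Gaussian bound and the union bound above yields the claimed tail inequality, completing the proof.
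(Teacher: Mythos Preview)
Your treatment of Conditions~1 and~2 is essentially the paper's own: the polar (Bartlett) representation \(\boldsymbol{X}=\boldsymbol{Z}\bigl(\tfrac{1}{N}\boldsymbol{Z}^\top\boldsymbol{Z}\bigr)^{-1/2}\) reduces both to Gaussian tail and anti-concentration estimates, with the spectral concentration of \(\tfrac{1}{N}\boldsymbol{Z}^\top\boldsymbol{Z}\) absorbing the Stiefel correction. The paper linearizes Condition~2 via the \((p-2)\)-th root (working with \(\beta_{ij}=(\lambda_i\lambda_j)^{1/(p-2)}\) and the event \(\lvert\beta_{ij}m_{ij}-\beta_{k\ell}m_{k\ell}\rvert<t/\sqrt{N}\)) rather than your change of variables, but this is cosmetic.

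Your argument for Condition~0 at level~\(\infty\) contains a genuine gap. The pointwise bound you invoke, namely that \(L_0 m_{ij}^{(N)}(\boldsymbol{X})\) is centered Gaussian in \(\boldsymbol{W}\) with variance \(O(1/N)\) for \emph{fixed} \(\boldsymbol{X}\), cannot be applied at \(\boldsymbol{X}=\boldsymbol{X}_{t_k}^{(0)}\): for \(t_k>0\) the point \(\boldsymbol{X}_{t_k}^{(0)}\) is obtained by integrating the flow generated by \(-\nabla H_0\), so it is a nonlinear functional of \(\boldsymbol{W}\). The random variable \(L_0 m_{ij}^{(N)}(\boldsymbol{X}_{t_k}^{(0)})\) is therefore not Gaussian in \(\boldsymbol{W}\), and neither is there any obvious reason (for fixed \(\boldsymbol{W}\)) why its law under \(\mu_{N\times r}\) over \(\boldsymbol{X}_0\) should be sub-Gaussian at scale \(N^{-1/2}\): the noise-only gradient flow is not volume-preserving, so \(\mu_{N\times r}\) is not invariant under it for a fixed disorder realization.

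The paper closes this gap by an invariance argument you do not use. Under the \emph{joint} law \(\mu_{N\times r}\otimes\mathbb{P}\), both \(\hat{\boldsymbol{X}}_t\) and the retraction \(R^N_{\hat{\boldsymbol{X}}_0}(\nabla H_0(\hat{\boldsymbol{X}}_t))\) are invariant under left rotations for every \(t\ge 0\) (this uses that \(H_0(\boldsymbol{O}\cdot)\overset{d}{=}H_0(\cdot)\) and that the flow commutes with the rotation). Multiplying on the right by an independent Haar element \(\boldsymbol{H}\in O(r)\) forces the product \(\boldsymbol{Z}_t=R^N_{\hat{\boldsymbol{X}}_0}(\nabla H_0(\hat{\boldsymbol{X}}_t))\boldsymbol{H}\) to be exactly \(\mu_{N\times r}\)-distributed at each fixed \(t\). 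The concentration of \(\tfrac{1}{N}\langle\boldsymbol{Z}_t,[\boldsymbol{v}_i]_j\rangle\) then follows from the Stiefel concentration lemma you already proved for Condition~1, uniformly in \(t\), and the discretization and Lipschitz-in-time estimates proceed as you outlined. In short, the discretization skeleton of your proof is right, but the input at each grid point must come from rotational invariance of the noise dynamics rather than from pointwise Gaussianity in \(\boldsymbol{W}\).
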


The proof of Lemma~\ref{lem: invariant measure GF} is deferred to Appendix~\ref{appendix: invariant measure}. 

\subsection{Main results in nonasymptotic form}

We are now ready to state our main results in nonasymptotic form under gradient flow dynamics with Hamiltonian \(H\). The corresponding nonasymptotic results for the original gradient flow dynamics given by~\eqref{eq: GF} remain the same in terms of sample complexity thresholds and SNR conditions. The only difference lies in the required runtime, which must be scaled by a factor of \(\sqrt{M}\) in the original dynamics, as explained above. Furthermore, in light of Remark~\ref{rmk: sign initialization}, we assume a positive initialization of the correlations. This allows us to drop the absolute values of the correlations in the subsequent statements and implies that \(r_\text{c} = r\). Finally, we denote by \((i_1^\ast, j_1^\ast), \ldots, (i_r^\ast, j_r^\ast)\) the greedy maximum selection of the initialization matrix \(\boldsymbol{I}_0\), defined in~\eqref{eq: initialization matrix} (see also Definition~\ref{def: greedy operation}).

We first present the recovery of the first spike. To enhance the clarity of our statement, we introduce the following definition.

\begin{defn}
We say that the $j$th column $(\boldsymbol{X}_{T_0})_{j}$ of the gradient flow process $(\boldsymbol{X}_t)_{t \geq 0}$, initialized at $\boldsymbol{X}_0 \sim \mu_0^{(N)} \in \mathcal{M}_1(\mathcal{S}_{N,r})$, \emph{recovers} the signal vector $\boldsymbol{v}_i$ at \emph{time} $T_0$ with \emph{precision} $\varepsilon >0$ and \emph{rate} $\xi>0$ if, for every $T \geq T_0$,
\[
\int_{\mathcal{S}_{N,r}} \mathbb{P}_{\boldsymbol{X}^+} \left(\inf_{t \in [T_0,T]} m_{ij}(\boldsymbol{X}_t) \geq 1-\varepsilon\right) \textnormal{d} \mu_0^{(N)} (\boldsymbol{X}) \geq \xi.
\]
Here, \(\boldsymbol{X}^+\) denotes the initialization conditioned on \(m_{ij}(\boldsymbol{X}_0) > 0\) for every \(i,j \in [r]\).
\end{defn}

Our first result determines the sample complexity required to efficiently recover the leading spike (up to a permutation). 

\begin{prop}[Recovery of the first spike] \label{thm: strong recovery first spike GF p>2}
Consider a sequence of initializations \(\mu_0^{(N)} \in \mathcal{M}_1(\mathcal{S}_{N,r})\) satisfying Condition \(0\) at level \(n\), Condition \(1\), and Condition \(2\). Then, the following holds: for every \(n \geq 1\), \(\gamma_0 > 0\), \(\gamma_1 > \gamma_2 \vee \gamma_3\), \(c_0 \ge 2 \left (1 + \frac{\gamma_1}{\gamma_3}\right)\), and \(\varepsilon > 0\), there exists \(C = C (p,\gamma_0,\gamma_2,c_0, \{\lambda_i\}_{i=1}^r)\) such that if \(\sqrt{M} \geq C (n+2)  N^{\frac{p-1}{2} - \frac{n}{2(n+1)}}\) and \(N\) is sufficiently large, then the column vector \((\boldsymbol{X}_{T_0})_{j_1^\ast}\) of the gradient flow process recovers \(\boldsymbol{v}_{i_1^\ast}\) at time \(T_0 \gtrsim \frac{1}{(n+2) \gamma_0}  N^{-\frac{1}{2(n+1)}}\) with precision \(\varepsilon\) and rate at least $1 - \frac{1}{C}$.
\end{prop}

\begin{rmk} \label{rmk1}
The constant \(C=C (p,\gamma_0,\gamma_2,c_0, \{\lambda_i\}_{i=1}^r)\) in Proposition~\ref{thm: strong recovery first spike GF p>2} takes the form 
\[
C = C'\frac{\gamma_0c_0}{p \lambda_r^2 \gamma_2^{p-1}},
\]
where \(C'\) is an absolute constant. Moreover, the convergence rate can be more precisely lower bounded by \(1-\eta\), where
\[
\eta = C_1 e^{-c_1 \gamma_0^2} + C_2 e^{-c_2 \gamma_1^2} + C_3 e^{- c_3 (\gamma_2 + \gamma_3) \sqrt{N}} + C_4 \gamma_2 + C_5  \gamma_3 + e^{-K N}.
\]
Here, the constants \(C_i, c_i\) depend only on those in Definitions~\ref{def: condition 1 GF},~\ref{def: condition 2 GF}, and~\ref{def: condition 0 GF}. The constant \(K\) depends only on $p,n$, and $\{\lambda_{i}\}_{i=1}^{r}$, and arises from the norm control of the noise Hamiltonian \(H_0\) (see Lemma~\ref{lem: regularity H0}). Lastly, the notation $\gtrsim$ in the expression for $T_0$ hides a constant that depends only on $\varepsilon$ and $\{\lambda_{i}\}_{i=1}^{r}$.
\end{rmk}

Proposition~\ref{thm: strong recovery first spike GF p>2} shows that the sample complexity required to efficiently recover the first spike (up to a permutation) matches the threshold in the single-spike case. Our next result determines the sample complexity needed for recovering a permutation of all spikes. To state it precisely, we first introduce the following definition.

\begin{defn}
For every subset $A \subset \mathcal{S}_{N, r}$, let $\mathcal{T}_A$ denote the first hitting time of \(S\) by the gradient flow $(\boldsymbol{X}_t)_{t \ge 0}$, that is,
\[
\mathcal{T}_A \coloneqq \inf\{t \ge 0\colon\boldsymbol{X}_t \in A\}.
\]
We say that the gradient flow $(\boldsymbol{X}_t)_{t \ge 0}$, initialized at $\boldsymbol{X}_0 \sim \mu_0^{(N)} \in \mathcal{M}_1(\mathcal{S}_{N,r})$, \emph{reaches} $A$ by time $T_0$ with rate $\xi>0$ if
\[
\int_{\mathcal{S}_{N,r}} \mathbb{P}_{\boldsymbol{X}^+} \left(\mathcal{T}_A \geq T_0 \right) \mathrm{d} \mu_0^{(N)} (\boldsymbol{X}) \leq \xi.
\]
Here, \(\boldsymbol{X}^+\) denotes the initialization conditioned on \(m_{ij}(\boldsymbol{X}_0) > 0\) for every \(i,j \in [r]\).
\end{defn}

\begin{prop}[Recovery of all spikes] \label{thm: strong recovery all spikes GF p>2}
For every \(\varepsilon > 0\), define the set
\begin{equation} \label{eq: set strong recovery GF p>2}
\begin{split}
R(\varepsilon) = \Big \{ \boldsymbol{X} & \colon m_{i_k^\ast j_k^\ast}^{(N)} (\boldsymbol{X}) \geq 1 - \varepsilon  \enspace \forall k \in [r] \enspace \textnormal{and} \enspace \\
& \quad m_{ij}^{(N)}(\boldsymbol{X}) \lesssim \log(N)^{-\frac{1}{2}}N^{-\frac{p-1}{4}} \enspace \forall (i,j) \in [r]^2 \backslash \cup_{k=1}^{r} (i_k^\ast, j_k^\ast) \Big \},
\end{split}
\end{equation}
where \(\lesssim\) hides an absolute constant. Consider a sequence of initializations \(\mu_0^{(N)} \in \mathcal{M}_1(\mathcal{S}_{N,r})\) satisfying Condition 1 and Condition 2. Then, the following holds: for every \(\gamma_1 > \gamma_2 \vee \gamma_3\), \(c_0 \ge 2 \left (1 + \frac{\gamma_1}{\gamma_3}\right)\), and \(\varepsilon > 0\), there exists a constant \(C = C(p,r,\gamma_2,c_0,\{\lambda_i\}_{i=1}^r)\) such that if \(\sqrt{M} \ge C N^{\frac{p-1}{2}}\), then for sufficiently large $N$, the gradient flow $(\boldsymbol{X}_t)_{t \geq 0}$ reaches $R(\varepsilon)$ at some time \(T_0 \gtrsim \frac{1}{\sqrt{N}}\), with rate at most $\frac{1}{C}$.
\end{prop}

\begin{rmk} \label{rmk2}
The constant \(C= C(p,r,\gamma_2, c_0, \{\lambda_i\}_{i=1}^r)\) in Proposition~\ref{thm: strong recovery all spikes GF p>2} is given by 
\[
C = C' \frac{\Lambda c_0}{p \lambda_r^2 \gamma_2^{p-1}}, 
\]
where \(C'\) is an absolute constant and \(\Lambda\) depends only on \(p,r\), and \(\{\lambda_i\}_{i=1}^r\). As in Proposition~\ref{thm: strong recovery first spike GF p>2}, our proofs establish a sharper lower bound on the convergence rate, given by 
\[
\eta = C_1 e^{- c_1 \gamma_1^2} + C_2 e^{- c_2 (\gamma_2+ \gamma_3) \sqrt{N}} + C_3 \gamma_2 + C_4  \gamma_3 + e^{-K N},
\]
where the constants \(C_i, c_i\) arise from Definitions~\ref{def: condition 1 GF} and ~\ref{def: condition 2 GF}, while the constant \(K\) depends only on \(p\) and \(\{\lambda_i\}_{i=1}^r\), and is derived from Lemma~\ref{lem: regularity H0}. Finally, note that the symbol $\gtrsim$, used for $T_0$, hides a constant that depends only on $\varepsilon$ and the eigenvalues $\{\lambda_{i}\}_{i=1}^{r}$.
\end{rmk}

As discussed in Subsection~\ref{subsection: main results}, the sample complexity required for recovery of a permutation of all spikes scales as \(N^{p-1}\), compared to \(N^{p-2}\) for the recovery of the first direction. This is because we are not able to exploit the advantageous scaling of the noise \(L_0 m_{ij}^{(N)}\), once \(m_{i_1^\ast j_1^\ast}^{(N)}\) becomes macroscopic, as explained in Subsection~\ref{subsection: outline proofs}.

\begin{rmk} \label{rmk: langevin SNRS}
In our companion paper~\cite{benarous2024langevin}, we show that under Langevin dynamics, the permutation of the recovered spikes correspond to the identity permutation, achieving thus exact recovery, provided the SNRs satisfy
\[
\lambda_i > \frac{c_0+1}{c_0-1} \left ( \frac{3 \gamma_1}{\gamma_2} \right)^{p-2} \lambda_{i+1},
\]
for every \(1 \le i \le r-1\). This also extends to gradient flow dynamics.
\end{rmk}

We now present the proof of Theorem~\ref{thm: strong recovery GF p>2 asymptotic}. The proofs of Propositions~\ref{thm: strong recovery first spike GF p>2} and~\ref{thm: strong recovery all spikes GF p>2} are deferred to Section~\ref{proofs}.

\begin{proof} [\textbf{Proof of Theorem~\ref{thm: strong recovery GF p>2 asymptotic}}]
According to Lemma~\ref{lem: invariant measure GF}, the uniform measure \(\mu_{N \times r}\) on \(\mathcal{S}_{N,r}\) satisfies Condition \(1\) and Condition \(2\), and weakly satisfies Condition \(0\) at level \(\infty\). To prove Theorem~\ref{thm: strong recovery GF p>2 asymptotic}, we must identify suitable sequences (in \(N\)) for the parameters \(\gamma_0, \gamma_1, \gamma_2\), and \(\gamma_3\) that govern the rates $\eta$ appearing in Remarks~\ref{rmk1} and~\ref{rmk2}, ensuring that $\eta$ vanishes in the large-\(N\) limit. Both Propositions~\ref{thm: strong recovery first spike GF p>2} and~\ref{thm: strong recovery all spikes GF p>2} depend on a control parameter \(c_0\) and require sufficiently large \(N\). Hence, we need to show that the assumptions of Theorem~\ref{thm: strong recovery GF p>2 asymptotic} are sufficient to guarantee the existence of such sequences for the parameters \(\gamma_0, \gamma_1, \gamma_2\), and \(\gamma_3\), while satisfying the constraints on $c_0$ and $N$. 

We begin by proving item (a). Let \(\alpha>p-2\), and define
\[
\nu \coloneqq \frac{\alpha - (p-2)}{2} > 0, \quad n_0 (\nu) \coloneqq \left \lfloor \frac{1}{2\nu} \right \rfloor.
\]
Then for every \(n \ge n_0(\nu)\), we have
\[
\frac{1}{2(n+1)} < \nu.
\]
Now, from Proposition~\ref{thm: strong recovery first spike GF p>2} and Remark~\ref{rmk1}, the required condition for $\sqrt{M}$ is given by
\[
\sqrt{M} \ge \omega(n) \coloneqq C'\frac{\gamma_0c_0}{p\lambda_{r}^{2}\gamma_{2}^{p-1}}(n+2)N^{\frac{p-2}{2}+\frac{1}{2(n+1)}}.
\]
Fix $n \ge n_0(\nu)$. By construction of \(n_0 (\nu)\), we have 
\[
\frac{p-2}{2} + \frac{1}{2(n+1)}  < \frac{\alpha}{2},
\]
so that for sufficiently large \(N\), the condition \(\sqrt{M} = N^{\alpha/2} > \omega(n)\) is satisfied. Applying Proposition~\ref{thm: strong recovery first spike GF p>2} with \(\mu_{N \times r}\) for the initialization mesure, we obtain that there exists 
\[
T_0 \gtrsim \frac{1}{\gamma_0 (n+2)} N^{-\frac{1}{2(n+1)}},
\]
such that for every $\varepsilon >0$,
\[
\int \mathbb{P}_{\boldsymbol{X}^+}\left(\inf_{t \in [T_0,T]} m_{i^{\ast}_{1}j^{\ast}_{1}}\left(\boldsymbol{X}_t\right) \geq 1-\varepsilon\right) \mathrm{d} \mu_{N \times r} \geq 1-\eta,
\]
where the error term $\eta$ is given by
\[
\eta = C_1 e^{-c_1 \gamma_0^2} + C_2 e^{-c_2 \gamma_1^2} + C_3 e^{- c_3 (\gamma_2 + \gamma_3) \sqrt{N}} + C_4 \gamma_2 + C_5  \gamma_3 + e^{-K N}.
\]
We must ensure that all assumptions of Proposition~\ref{thm: strong recovery first spike GF p>2} are satisfied. In particular, in the proof of Proposition~\ref{thm: strong recovery first spike GF p>2}, a necessary condition for controlling the generator correction term (see~\eqref{eq:comp_gen}) is given by~\eqref{eq: condition on N}, i.e., 
\begin{equation} \label{eq:first_cond_N}
N \geq \frac{r^{2}\lambda_{1}^{2}\tilde{\gamma}^{p+1}}{C_0\lambda_r^2 \gamma_2^{p-1}},
\end{equation}
where \(\tilde{\gamma} > \gamma_1\) is of the same order, and \(C_0 = 1 / c_0\) must satisfy
\[
C_0 \leq \frac{\gamma_3/\gamma_1}{2 (1 + \gamma_3 / \gamma_1)}.
\]
Since Proposition~\ref{thm: strong recovery first spike GF p>2} holds for all such $C_0$, we may take the largest admissible value. Substituting this into~\eqref{eq:first_cond_N} and replacing \(\tilde{\gamma} \sim \gamma_1\), we obtain
\[
N \geq 2 \frac{r^2 \lambda_1^2 \gamma_1^{p+2}}{\lambda_r^2 \gamma_2^{p-1} \gamma_3} \left( 1+\frac{\gamma_3}{\gamma_1}\right).
\]
Several similar conditions arise in our companion paper~\cite{benarous2024langevin}, typically with fractional powers of \(N\) on the left-hand side and slightly milder dependencies on the parameters $\gamma_0, \gamma_{1}, \gamma_{2}$, and $\gamma_3$ on the right-hand side. Thus, to ensure all such constraints, we focus on the condition 
\begin{equation} \label{eq:fourth_cond_N}
N^\kappa \geq 2 \frac{r^2 \lambda_1^2 \gamma_1^{p+2}}{\lambda_r^2 \gamma_2^{p-1} \gamma_3} \left(1+\frac{\gamma_3}{\gamma_1}\right).
\end{equation}
for some fixed $\kappa >0$, independent of all other parameters. We now choose the parameter sequences so that \(\gamma_0, \gamma_1 \to \infty\) and \(\gamma_2,\gamma_3 \to 0\), in a way that ensures condition~\eqref{eq:fourth_cond_N} is satisfied. A concrete admissible choice is
\[
\gamma_0(N) = \gamma_1 (N) = \log(N) \quad \text{and} \quad \gamma_{2}(N) = \gamma_3(N) = \frac{1}{\log(N)}.
\]
Substituting into the expression for \(\eta\), we obtain
\[
\eta = C_1 e^{-c_1 \log^2(N)} + C_2 e^{-c_2 \log^2(N)} + C_3 e^{-c_3 \sqrt{N} / \log(N)} + \mathcal{O} \left ( \frac{1}{\log(N)} \right) + e^{-KN},
\]
which implies that 
$$\lim_{N \to \infty} \eta = 0.$$
Moreover, under the same parameter choice, condition~\eqref{eq:fourth_cond_N} reduces to
\[
N^\kappa \ge C \log^{2p+2}(N),
\]
for some constant \(C>0 \), which clearly holds for sufficiently large $N$. This verifies that all required assumptions are met and completes the proof of part (a).

To prove item (b), we observe that, unlike in part (a), it is not necessary to introduce auxiliary quantities such as \(\nu\) and \(n_0(\nu)\), since the bounding flows method from Lemma~\ref{lem: bounding flows} does not apply in this case. We begin by defining the sequence  
$$a_N \coloneqq \frac{M}{N^{p-1}}.$$ 
Under the assumption \(M \gg N^{p-1}\), we have $\lim_{N \to \infty} a_N = \infty$. According to Proposition~\ref{thm: strong recovery all spikes GF p>2}, it suffices to verify that the error term \(\eta\) vanishes as \(N \to \infty\), and that the sample complexity condition \(\sqrt{M} \ge C N^{\frac{p-1}{2}}\) is satisfied, where \(C = C(p, r, \gamma_2, c_0, \{\lambda_i\})\) as given in Remark~\ref{rmk2}. To this end, we define the following parameter sequences:
$$\gamma_0(N)  = \gamma_{1}(N) = \log(a_N) \quad \text{and} \quad \gamma_{2}(N) = \gamma_3(N) = \frac{1}{\log(a_N)}.$$
Substituting these into the convergence rate expression from Remark~\ref{rmk2}, we obtain
\[
\lim_{N \to \infty} \eta =0,
\]
provided that 
$$\lim_{N \to \infty} \frac{\sqrt{N}}{\log(a_N)} = \infty,$$
which ensures that the term \(e^{-c_3 (\gamma_2 + \gamma_3) \sqrt{N}}\) in the error bound vanishes asymptotically. This condition clearly holds whenever \(a_N \to \infty\) grows at least polynomially in \(N\), as is the case here. It remains to verify that the sample complexity condition \(\sqrt{M} \ge C N^{\frac{p-1}{2}}\) is satisfied under our parameter choices. From Remark~\ref{rmk2}, we have
\[
C = C' \frac{\Lambda c_0}{p \lambda_r^2 \gamma_2^{p-1}}, 
\]
with \(c_0 \ge 2 (1 + \gamma_1/\gamma_3)\). We therefore find that \(C = \Theta (\log^{p+1}(a_N))\), and thus the sample complexity condition becomes
\[
\sqrt{M} = \sqrt{a_N} N^{\frac{p-1}{2}} \ge C N^{\frac{p-1}{2}},
\]
which holds for sufficiently large \(N\). Thus, all assumptions of Proposition~\ref{thm: strong recovery all spikes GF p>2} are satisfied in the large-\(N\) limit. This completes the proof of part (b).
\end{proof}

\begin{proof}[\textbf{Proof of Theorem~\ref{thm: strong recovery online p>2 asymptotic stronger}}]
The proof follows from the analysis in Section~\ref{proofs}, particularly from Lemmas~\ref{lem: E_1 GF} and~\ref{lem: E_2 GF}. The asymptotic formulation of the statement follows a similar approach to that used in the proof of Theorem~\ref{thm: strong recovery GF p>2 asymptotic}.
\end{proof}
\section{Preliminary results} \label{preliminary}

In this section, we present preliminary results that are crucial for proving the main results in Section~\ref{section: main results}. The proofs are deferred to our companion paper on Langevin dynamics~\cite{benarous2024langevin}, where these results are stated in greater generality for both Langevin and gradient flow dynamics.

\subsection{Ladder relations and bounding flows method}

Recall the Hamiltonian \(H_0 \colon \mathcal{S}_{N,r} \to \R\) defined by
\[
H_0 (\boldsymbol{X}) = N^{-\frac{p-1}{2}} \sum_{i=1}^r \lambda_i \langle \boldsymbol{W}, \boldsymbol{x}_i^{\otimes p}\rangle,
\]
where \(\boldsymbol{W} \in (\R^N)^{\otimes p}\) is an order-\(p\) tensor with i.i.d.\ entries \(W_{i_1, \ldots, i_p} \sim \mathcal{N}(0,1)\), and \(\mathcal{S}_{N,r}\) denotes the normalized Stiefel manifold defined in~\eqref{def: normalized stiefel manifold}. Following the approach in~\cite{ben2020bounding, arous2020algorithmic}, we work with the \(\mathcal{G}\)-norm, which is motivated by the homogeneous Sobolev norm and which we introduce as follows.

\begin{defn}[\(\mathcal{G}\)-norm on \(\mathcal{S}_{N,r}\)] \label{def: G norm}
For any integer \(k\), we say that a function \(F \colon \mathcal{S}_{N,r} \to \R\) is in the space \(\mathcal{G}^k(\mathcal{S}_{N,r})\) if
\[
\norm{F}_{\mathcal{G}^k} \coloneqq \sum_{\ell=0}^k N^{\ell/2} \norm{|\nabla^\ell F|_{\text{\textnormal{op}}}}_{L^\infty(\mathcal{S}_{N,r})} < \infty.
\]
Here, \(\nabla^\ell F\) denotes the \(\ell\)th Riemannian (covariant) derivative of \(F\), defined as a tensor field of order \(\ell\). For every \(\boldsymbol{X} \in \mathcal{S}_{N,r}\), it defines an \(\ell\)-linear map on the tangent space \(T_{\boldsymbol{X}}\mathcal{S}_{N,r}\):
\[
\nabla^\ell F (\boldsymbol{X}) \colon T_{\boldsymbol{X}}\mathcal{S}_{N,r} \times \cdots \times T_{\boldsymbol{X}}\mathcal{S}_{N,r}  \to \R.
\]
This map is defined recursively by
\[
\begin{split}
\nabla^\ell F(\boldsymbol{X}; \boldsymbol{U}_1, \ldots, \boldsymbol{U}_\ell)
&= \nabla_{\boldsymbol{U}_1} \nabla^{\ell-1} F(\boldsymbol{X}; \boldsymbol{U}_2, \ldots, \boldsymbol{U}_\ell) - \sum_{j=2}^\ell \nabla^{\ell-1} F(\boldsymbol{X}; \boldsymbol{U}_2, \ldots, \nabla_{\boldsymbol{U}_1} \boldsymbol{U}_j, \ldots, \boldsymbol{U}_\ell).
\end{split}
\]
for all \(\boldsymbol{U}_1, \ldots, \boldsymbol{U}_\ell \in T_{\boldsymbol{X}} \mathcal{S}_{N,r}\). The operator norm of \(\nabla^\ell F\) is given by
\[
|\nabla^\ell F|_{\textnormal{op}} (\boldsymbol{X}) = \sup_{\boldsymbol{U}_1, \ldots, \boldsymbol{U}_\ell \in T_{\boldsymbol{X}} \mathcal{S}_{N,r}, \norm{\boldsymbol{U}_i}_\textnormal{F} \le 1} \lvert \nabla^\ell F (\boldsymbol{X}; \boldsymbol{U}_1, \ldots, \boldsymbol{U}_\ell)\rvert,
\]
where \(\norm{\boldsymbol{U}}_\textnormal{F} = \sqrt{\Tr (\boldsymbol{U}^\top \boldsymbol{U})}\) is the Frobenius norm. For further details, see~\cite[Section 10.7]{boumal2023} and the references therein.
\end{defn}

We emphasize that this definition generalizes the \(\mathcal{G}\)-norm introduced by~\cite{ben2020bounding} for functions on the sphere \(\mathbb{S}^{N-1}(\sqrt{N})\). We now state the following key estimate for the \(\mathcal{G}\)-norm of \(H_0\).

\begin{lem}[Regularity of \(H_0\)] \label{lem: regularity H0}
For every \(n\), there exist \(C_1 = C_1(p,n)\) and \(C_2 = C_2(p,n) >0\) such that
\[
\mathbb{P}\left(\norm{H_0}_{\mathcal{G}^n} \geq C_1 \left (\sum_{i=1}^r \lambda_i \right ) N \right) \leq \exp \left (- C_2 \frac{(\sum_{i=1}^r \lambda_i)^2}{\sum_{i=1}^r \lambda_i^2} N\right ).
\]
\end{lem}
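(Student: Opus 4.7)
The plan is to apply the Borell--Tsirelson--Ibragimov--Sudakov (Borell--TIS) Gaussian concentration inequality to each of the random variables $N^{\ell/2}\norm{|\nabla^\ell H_0|_{\mathrm{op}}}_{L^\infty(\mathcal{M}_{N,r})}$ for $\ell \in \{0,1,\dots,n\}$ and then take a union bound. This is the natural route because, for each fixed $\boldsymbol{X}$ and each fixed choice of unit tangent vectors, $\nabla^\ell H_0(\boldsymbol{X})$ is a linear functional of the Gaussian tensor $\boldsymbol{W}$; the supremum defining each summand of $\norm{H_0}_{\mathcal{G}^n}$ is then a Lipschitz function of $\boldsymbol{W}$ and Borell--TIS applies.

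The first step is a pointwise variance estimate. Since $H_0$ is of degree $p$ in $\boldsymbol{X}$ and linear in $\boldsymbol{W}$, the Euclidean derivatives $\hat{\nabla}^\ell H_0(\boldsymbol{X})[\boldsymbol{u}_1,\dots,\boldsymbol{u}_\ell]$ are linear combinations of terms of the form $N^{-(p-1)/2}\lambda_i\langle\boldsymbol{W},\boldsymbol{x}_i^{\otimes(p-\ell)}\otimes\boldsymbol{u}_1\otimes\cdots\otimes\boldsymbol{u}_\ell\rangle$. Using the orthogonality $\boldsymbol{X}^\top\boldsymbol{X}=N\boldsymbol{I}_r$ and $\norm{\boldsymbol{u}_k}=1$, these are centered Gaussians with variance $N^{-(p-1)}\lambda_i^2\norm{\boldsymbol{x}_i}^{2(p-\ell)}=\lambda_i^2 N^{1-\ell}$, and passing to the Riemannian derivative via the projection formula \eqref{eq: riemannian gradient on normalized stiefel} only introduces polynomial prefactors in the rescaled quantities $\boldsymbol{X}/\sqrt{N}$ and $\boldsymbol{u}_k$ that do not affect this scaling. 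Consequently, $N^{\ell/2}|\nabla^\ell H_0|_{\mathrm{op}}(\boldsymbol{X})$ is sub-Gaussian with variance proxy $C(p,\ell,r)\,N\sum_i\lambda_i^2$, uniformly in $\boldsymbol{X}\in\mathcal{M}_{N,r}$.

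The second step is to control the supremum over $\boldsymbol{X}$ and over unit tangent multi-vectors. The manifold $\mathcal{M}_{N,r}$ has diameter $O(\sqrt{N})$ in $\R^{N\times r}$ and admits an $\varepsilon$-net of cardinality at most $(C\sqrt{N}/\varepsilon)^{Nr}$; the unit sphere in each tangent space has a net of comparable size. Together with a Lipschitz bound on $\boldsymbol{X}\mapsto N^{\ell/2}|\nabla^\ell H_0|_{\mathrm{op}}(\boldsymbol{X})$ obtained from the pointwise variance bound applied with $\ell$ replaced by $\ell+1$, a chaining argument (or Dudley's entropy integral) yields $\E\bigl[N^{\ell/2}\norm{|\nabla^\ell H_0|_{\mathrm{op}}}_{L^\infty}\bigr]\le C_1(p,n,r)(\sum_i\lambda_i)\,N$. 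Summing over $\ell\in\{0,\dots,n\}$ gives $\E\norm{H_0}_{\mathcal{G}^n}\le C_1(p,n,r)(\sum_i\lambda_i)\,N$. Finally, Borell--TIS applied to $\boldsymbol{W}\mapsto\norm{H_0}_{\mathcal{G}^n}$, whose Gaussian Lipschitz constant is bounded by $C_2(p,n,r)\sqrt{N\sum_i\lambda_i^2}$ (the sup of $\ell^2$-norms of coefficient tensors of the underlying family of linear functionals of $\boldsymbol{W}$), gives a deviation of order $(\sum_i\lambda_i)\,N$ above the mean with probability at most $\exp(-c(\sum_i\lambda_i)^2/(\sum_i\lambda_i^2)\cdot N)$, which is exactly the announced bound.

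The main technical obstacle is the chaining step on the Stiefel manifold: one needs to ensure that the logarithmic entropy factors combine with the $N$-dependence of the Lipschitz and variance bounds to give exactly $N$, not $N\log N$, in the expected supremum. This requires careful tracking of the polynomial-in-$\boldsymbol{X}$ prefactors produced by iterated differentiation through the Riemannian projection in \eqref{eq: riemannian gradient on normalized stiefel}, as a careless application of the chain rule produces extraneous factors of $N$ that would degrade the concentration rate from $N$ to something worse.
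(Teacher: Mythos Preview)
Your approach is correct and is essentially the one the paper invokes: the paper does not prove the lemma directly but refers to Theorem~4.3 of \cite{ben2020bounding}, whose argument is precisely Borell--TIS concentration combined with a Dudley/chaining bound on the expected supremum. Your concern about a spurious $\log N$ factor is unwarranted, since the Dudley integral over a parameter set of dimension $d$ of order $Nr$ with pointwise standard deviation $\sigma$ of order $\sqrt{N\sum_i\lambda_i^2}$ yields $O(\sigma\sqrt{d})$, which is the claimed $O(N\sum_i\lambda_i)$ up to an $r$-dependent constant.
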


Lemma~\ref{lem: regularity H0} reduces to~\cite[Theorem 4.3]{ben2020bounding} in the special case \(r=1\). Its proof follows the same strategy as that of~\cite{ben2020bounding}, to which we refer the reader for details. 

We next present the ladder relations, which will be useful to bound \(\norm{L_0 m_{ij}^{(N)}}_\infty\), where we recall from~\eqref{eq: generator GF noise} that the generator \(L_0\) is given by \(L_0 =  - \langle \nabla H_0, \hat{\nabla} \cdot \rangle \). Since the Riemannian gradient at a point $\boldsymbol{X} \in \mathcal{S}_{N,r}$ is obtained by projecting the Euclidean gradient onto the tangent space \(T_{\boldsymbol{X}} \mathcal{S}_{N,r}\) at $\boldsymbol{X}$ (see~\eqref{eq: riemannian gradient on normalized stiefel}), and since this projection preserves inner products with the Euclidean gradient, it follows that 
\[
\langle \nabla H_0,\hat{\nabla} \cdot \rangle  = \langle \nabla H_0, \nabla \cdot \rangle.
\]
Here, we recall that \(\hat \nabla\) denotes the Euclidean gradient, while \(\nabla\) denotes the Riemmanian gradient.
 
\begin{lem}[Ladder relations] \label{lem: ladder relations}
Let \(L\) be any linear operator acting on the space of smooth functions \(F \colon \mathcal{S}_{N,r} \to \R\), and let \(n \geq m \geq 1\) be integers. Define 
\[
\norm{L}_{\mathcal{G}^n \to \mathcal{G}^m} \coloneqq \sup_{F \in  \mathcal{G}^n (\mathcal{S}_{N,r})} \frac{\norm{LF}_{\mathcal{G}^m}}{ \norm{F}_{\mathcal{G}^n}}.
\]
Then, for every \(n \geq 1\), there exists a constant \(c(n)\) such that for every \(N\), \(r\), and every $G \in \mathcal{G}^n(\mathcal{S}_{N,r})$,
\[
\norm{\langle \nabla G, \nabla \cdot \rangle}_{\mathcal{G}^n \to \mathcal{G}^{n-1}} \leq \frac{c(n)}{N} \norm{G}_{\mathcal{G}^n}. 
\]
\end{lem}

The proof of Lemma~\ref{lem: ladder relations} is provided in~\cite[Lemma 4.3]{benarous2024langevin}. Applying this result, we can estimate \(\norm{L_0 m_{ij}^{(N)}}_\infty\) for every \(1 \leq i,j \leq r\). In light of Lemma~\ref{lem: regularity H0}, for every \(n \geq 1\), there exist constants \(K = K (p,n, \{\lambda_i\}_{i=1}^r)\) and \(C = C (p,n, \{\lambda_i\}_{i=1}^r)\) such that 
\[
\norm{H_0}_{\mathcal{G}^n} \leq C N, 
\]
with \(\mathbb{P}\)-probability at least \(1 - \exp(- K N)\). Moreover, a direct computation shows that \(\norm{m_{ij}^{(N)}}_{\mathcal{G}^n} \leq c(n)\). Therefore, by Lemma~\ref{lem: ladder relations}, there exists a constant \(\Lambda = \Lambda(p,r, \{\lambda_i\}_{i=1}^r)\) such that 
\begin{equation} \label{eq: bound norm L_0m_ij}
\norm{L_0 m_{ij}}_\infty \leq \norm{ \langle \nabla H_0, \hat{\nabla} m_{ij}^{(N)}\rangle}_\infty \leq \frac{1}{N} \norm{H_0}_{\mathcal{G}^1} \norm{m_{ij}^{(N)}}_{\mathcal{G}^1} \leq \Lambda
\end{equation}
with \(\mathbb{P}\)-probability at least \(1 - \exp(-KN)\). \\

The \emph{bounding flows} method provides a sharper estimate of \(\norm{L_0 m_{ij}}_\infty \). This technique was introduced in~\cite{ben2020bounding} and later used in~\cite{arous2020algorithmic} to provide a precise control over the evolution of functions under Langevin and gradient flow dynamics on \(\mathbb{S}^{N-1}(\sqrt{N})\). Here, we extend the method in order to obtain more accurate bounds for the evolution of functions under gradient flow on the manifold \(\mathcal{S}_{N,r}\). In particular, the following result generalizes~\cite[Theorem 5.3]{arous2020algorithmic} and is extracted from~\cite[Lemma 4.4]{benarous2024langevin}.

\begin{lem}[Bounding flows on \(\mathcal{S}_{N,r}\)] \label{lem: bounding flows}
For every \(\gamma > 0\), define the interval \(I_\gamma = [-\frac{\gamma}{\sqrt{N}}, \frac{\gamma}{\sqrt{N}}]\). Let \(D \subset \mathcal{S}_{N,r}\), and consider a deterministic flow \((\boldsymbol{X}_t)_{t \ge 0}\) defined on \(D\) and evolving according to 
\[
\frac{\textnormal{d} \boldsymbol{X}_t}{\textnormal{d}t} = V(\boldsymbol{X}_t),
\]
where \(V\) is a smooth vector field satisfying \(V(\boldsymbol{X}_t) \in T_{\boldsymbol{X}_t} \mathcal{S}_{N,r}\) for all \(t\). Let \(L\) denote the first-order differential operator associated with the flow, defined as the Lie derivative along \(V\), i.e., 
\[
L  = \langle V, \hat{\nabla} \cdot \rangle.
\]
Suppose that \(\boldsymbol{X}_0 \in D\), and let the exit time be
\[
\mathcal{T}_{D^\text{c}} = \inf \{t \ge 0 \colon \boldsymbol{X}_t \notin D\}. 
\]
Let \(F \colon D \to \R\) be a smooth function. Suppose that the following conditions are satisfied for some integer \(n \geq 1\):
\begin{enumerate}
\item[(1)] The operator \(L\) has the form \(L = L_0 + \sum_{1 \leq i,j \leq r} a_{ij}(\boldsymbol{X}) A_{ij}\), where
\begin{enumerate}
\item[(a)] \(A_{ij} = \langle \nabla \psi_{ij}, \hat{\nabla} \cdot \rangle\) for some function \(\psi_{ij} \in C^\infty (\mathcal{S}_{N,r})\) with \(\norm{\psi_{ij}}_{\mathcal{G}^1} \leq c_1 N\),
\item[(b)] \(a_{ij} \in C^0 (\mathcal{S}_{N,r})\),
\item[(c)] \(L_0 = \langle \nabla U, \hat{\nabla} \cdot \rangle \) for some \(U \in C^{\infty}  (\mathcal{S}_{N,r})\) with \(\norm{U}_{\mathcal{G}^{2n}} \leq c_2(n) N\).
\end{enumerate}
\item[(2)] \(F\) is smooth with \(\norm{F}_{\mathcal{G}^{2n}} \leq c_3(n) \).
\item[(3)] There exists \(\gamma > 0\) such that \(L_0^k F(\boldsymbol{X}_0) \in I_{\gamma}\) for every \(0 \leq k \leq n-1\).
\item[(4)] There exist \(\varepsilon \in (0,1)\) and \(T_0^{(ij)}>0\), possibly depending on \(\varepsilon\), such that for every \(t \leq \mathcal{T}_{D^\text{c}} \wedge T_0^{(ij)}\), 
\[
\int_0^t |a_{ij}(\boldsymbol{X}_s)| \mathrm{d}s  \leq \varepsilon |a_{ij}(\boldsymbol{X}_t)|.
\]
\end{enumerate}
Then, there exists a constant \(K_1>0\), depending only on \(c_1,c_2,c_3\), and \(\gamma\), such that for every \(T_0 > 0\), 
\begin{equation} \label{eq: bounding flows}
| F(\boldsymbol{X}_t)| \leq K_1 \left (  \frac{\gamma}{\sqrt{N}} \sum_{k=0}^{n-1} t^k + t^n + \frac{1}{1-\varepsilon} \sum_{1 \leq i,j \leq r}\int_0^t |a_{ij}(\boldsymbol{X}_s)| \mathrm{d} s\right )
\end{equation}
for every \(t \leq \mathcal{T}_{D^\text{c}} \wedge \min_{1 \leq i,j \leq r} T_0^{(ij)} \wedge T_0\). 

If instead of item (3), the following holds:
\begin{enumerate}
\item[(3')] There exist \(T_1,\gamma >0\) such that \(e^{tL_0}F(\boldsymbol{X}_0) \in I_\gamma\) for every \(t< T_1\),
\end{enumerate}
then the bound~\eqref{eq: bounding flows} holds for every \(t \leq \mathcal{T}_{D^\text{c}} \wedge \min_{1 \leq i,j \leq r} T_0^{(ij)} \wedge T_0 \wedge T_1 \wedge 1\).
\end{lem}

\subsection{Evolution equations for the correlations}

For simplicity of notation, we omit the dependence on \(N\) in \(m_{ij}^{(N)}(\boldsymbol{X})\) and write \(m_{ij}(\boldsymbol{X})\) instead. For every \(i,j \in [r]\), the correlations \(m_{ij}\) are smooth functions from \(\mathcal{S}_{N,r} \subset \R^{N \times r}\) to \(\R\), and they satisfy the integral identity
\[
m_{ij}(\boldsymbol{X}_t) = m_{ij}(\boldsymbol{X}_0) + \int_0^t L m_{ij}(\boldsymbol{X}_s) \mathrm{d} s,
\]
where \(Lm_{ij}(\boldsymbol{X}_t) = - \langle \nabla H_{N,r}(\boldsymbol{X}_t), \hat{\nabla} m_{ij} (\boldsymbol{X}_t)\rangle\). An explicit computation of the generator yields the following evolution equations for the correlation functions \(\{m_{ij}(\boldsymbol{X}_t)\}_{1 \le i,j \le r}\), as established in our companion paper (see~\cite[Lemma 4.6]{benarous2024langevin}).

\begin{lem}[Evolution equation for \(m_{ij}\)] \label{lem: evolution equation m_ij}
For every \(1 \leq i,j \leq r\), 
\[
Lm_{ij} = L_0 m_{ij} + \sqrt{M} p \lambda_i \lambda_j m_{ij}^{p-1}  -  \sqrt{M}\frac{p}{2}\sum_{1 \leq k, \ell \leq r} \lambda_k m_{kj} m_{k \ell} m_{i \ell}\left (\lambda_j m_{kj}^{p-2} + \lambda_\ell m_{k \ell}^{p-2} \right) ,
\]
and
\[
L_0 m_{ij}  =  - \langle \nabla H_0, \hat{\nabla} m_{ij} \rangle.
\]
\end{lem}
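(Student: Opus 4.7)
The plan is to apply It\^o's formula to the smooth function $m_{ij}\colon \mathcal{M}_{N,r}\to\R$ along the gradient-flow trajectory $\boldsymbol{X}_t$. Since the dynamics is a deterministic ODE, the It\^o formula reduces to the chain rule and the martingale $M^{m_{ij}}_t$ is identically zero; this notation is retained only for consistency with the Langevin statement of~\cite{multispikedPCA}. After this reduction the problem becomes an explicit computation of the generator $Lm_{ij}=-\langle\nabla H,\hat\nabla m_{ij}\rangle$, where $H=H_0-\sqrt{M}\sum_{k,\ell} N\lambda_k\lambda_\ell m_{k\ell}^p$ is the rescaled Hamiltonian introduced in Section~\ref{section: main results}. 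By linearity, $Lm_{ij}$ decomposes as $L_0 m_{ij}+\sqrt{M}\,pN\sum_{k,\ell}\lambda_k\lambda_\ell m_{k\ell}^{p-1}\langle\nabla m_{k\ell},\hat\nabla m_{ij}\rangle$, and the whole task reduces to computing the bilinear quantities $\langle\nabla m_{k\ell},\hat\nabla m_{ij}\rangle$.

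I would carry out this computation as follows. Since $m_{ij}$ depends on $\boldsymbol{X}$ only through its $j$-th column, $\hat\nabla m_{ij}$ is supported on that single column and is proportional to $\boldsymbol{v}_i$. The Riemannian gradient $\nabla m_{k\ell}$ is then obtained from formula~\eqref{eq: riemannian gradient on normalized stiefel}: the symmetrized matrix $\boldsymbol{X}^\top\hat\nabla m_{k\ell}+(\hat\nabla m_{k\ell})^\top\boldsymbol{X}$ expresses itself linearly in the correlations $\{m_{ka}\}_a$, so the projection correction is supported on the column $\ell$ and on the direction $\boldsymbol{x}_\ell$. Evaluating column by column and using $\langle\boldsymbol{v}_k,\boldsymbol{v}_i\rangle=N\delta_{ki}$ together with the definition of $m_{ia}$, one arrives at an identity of the schematic form
\[
\langle\nabla m_{k\ell},\hat\nabla m_{ij}\rangle=\tfrac{1}{N}\delta_{ki}\delta_{\ell j}-\tfrac{1}{2N}\delta_{\ell j}\sum_a m_{ka}m_{ia}-\tfrac{1}{2N}m_{kj}m_{i\ell}.
\]
Plugging this back and summing over $(k,\ell)$, the diagonal Kronecker piece produces the drift $\sqrt{M}\,p\lambda_i\lambda_j m_{ij}^{p-1}$, while the two correction pieces combine, after relabeling the dummy index $a\mapsto\ell$ and using $m_{kj}\delta_{j\ell}=m_{k\ell}\delta_{j\ell}$ at $j=\ell$, into the symmetric expression $\lambda_k m_{kj}m_{k\ell}m_{i\ell}\bigl(\lambda_j m_{kj}^{p-2}+\lambda_\ell m_{k\ell}^{p-2}\bigr)$ of the lemma. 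The noise contribution is read off directly as $L_0 m_{ij}=-\langle\nabla H_0,\hat\nabla m_{ij}\rangle$.

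The main technical obstacle is the bookkeeping of the Riemannian projection. Because $\hat\nabla m_{ij}$ is itself not tangent to $\mathcal{M}_{N,r}$, the projection in~\eqref{eq: riemannian gradient on normalized stiefel} produces a priori two distinct sums---one involving an inner dummy summation inherited from the symmetric part $\boldsymbol{X}\boldsymbol{X}^\top\hat\nabla m_{k\ell}$, the other from the transposed part $\boldsymbol{X}(\hat\nabla m_{k\ell})^\top\boldsymbol{X}$---whose recombination into the single symmetric correction displayed in the statement is the one nontrivial algebraic step. All other steps---the chain rule, linearity of $L$, and the explicit form of $\hat\nabla m_{ij}$---are routine, so once this matching is performed the claimed formula follows at once.
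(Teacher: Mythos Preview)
Your proposal is correct. The paper does not actually prove this lemma in the present text but defers to~\cite[Lemma B.5]{multispikedPCA}; your direct computation---reducing everything to the bilinear quantities $\langle\nabla m_{k\ell},\hat\nabla m_{ij}\rangle$ via the projection formula~\eqref{eq: riemannian gradient on normalized stiefel} and then recombining the two correction sums after the relabeling $a\mapsto\ell$---is exactly the intended argument, and your intermediate identity $\langle\nabla m_{k\ell},\hat\nabla m_{ij}\rangle=\tfrac{1}{N}\delta_{ki}\delta_{\ell j}-\tfrac{1}{2N}\delta_{\ell j}\sum_a m_{ka}m_{ia}-\tfrac{1}{2N}m_{kj}m_{i\ell}$ is correct.
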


We refer to~\cite[Lemma 4.6]{benarous2024langevin} for a proof.

\subsection{Comparison inequalities}

We finally report Lemma 5.1 of~\cite{arous2020algorithmic} that provides simple comparison inequalities for functions. 

\begin{lem}[Bounds on functions] \label{lem: Gronwall}
Let \(\gamma >0\) with \(\gamma \neq 1\), \(c >0\), and \(f \in C_{\text{loc}}([0,T))\) with \(f(0)>0\). 
\begin{itemize}
\item[(a)] Suppose that there exists \(T\) such that \(f\) satisfies the integral inequality
\begin{equation} \label{eq: gronwall}
f(t) \geq a + \int_0^t c f^\gamma(s) \mathrm{d} s,
\end{equation}
for every \(t \leq T\) and some \(a >0\). Then, for \(t \geq 0\) satisfying \((\gamma-1)ca^{\gamma-1}t < 1\), we have that
\[
f(t) \geq a \left ( 1 - (\gamma-1) c a^{\gamma-1} t\right )^{-\frac{1}{\gamma-1}}.
\]
\item[(b)] If the integral inequality~\eqref{eq: gronwall} holds in reverse, i.e., if \(f(t) \leq a + \int_0^t c f^\gamma(s) ds\), then the corresponding upper bound holds.
\item[(c)] If \(\gamma >1\), then \(T \leq t_{\ast}\), where \(t_{\ast} = \left ( (\gamma-1)ca^{\gamma-1} \right )^{-1}\) is called the blow-up time.
\item[(d)] If~\eqref{eq: gronwall} holds with \(\gamma = 1\), then the Grönwall's inequality gives \(f(t) \geq a\exp(ct)\).
\end{itemize}
\end{lem}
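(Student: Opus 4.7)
The plan is to treat this as a standard comparison argument: the integral inequality lets one compare $f$ with the solution of the scalar ODE $\dot{y}=cy^\gamma$ with $y(0)=a$, which has an explicit closed form exhibiting finite-time blow up when $\gamma>1$ and exponential growth when $\gamma=1$. The essential reduction is from integral to differential inequality via an auxiliary function, after which separation of variables yields the stated formula.

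For part (a), I would first define $g(t) = a + \int_0^t c f^\gamma(s)\,ds$. Then $g$ is $C^1$ on $[0,T)$, $g(0)=a>0$, and by hypothesis $f \geq g \geq a > 0$ pointwise; in particular $g$ stays positive. Differentiating and using $f\geq g$ together with $\gamma>0$, we get the one-sided differential inequality
\[
g'(t) = c f^\gamma(t) \geq c g^\gamma(t), \qquad g(0)=a.
\]
Separating variables, $g^{-\gamma}g' \geq c$, and integrating from $0$ to $t$,
\[
\frac{g(t)^{1-\gamma} - a^{1-\gamma}}{1-\gamma} \geq c t.
\]
When $\gamma>1$, multiplying through by the negative factor $1-\gamma$ reverses the sign and yields $g(t)^{-(\gamma-1)} \leq a^{-(\gamma-1)} - (\gamma-1)ct$, valid as long as the right-hand side is positive, i.e.\ $(\gamma-1)c a^{\gamma-1} t < 1$; rearranging gives exactly the claimed bound on $g$ (and hence on $f$). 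The case $0<\gamma<1$ is handled identically: the sign considerations go the other way but the algebra produces the same closed form, with the constraint $(\gamma-1)ca^{\gamma-1}t<1$ being automatic in that regime.

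Part (b) is the mirror image: set $g(t) = a + \int_0^t cf^\gamma(s)\,ds$ again, now $f \leq g$, and hence $g'(t) = cf^\gamma(t) \leq cg^\gamma(t)$ (using $\gamma>0$ and $g\geq a>0$); the same separation-of-variables computation then delivers the reverse inequality. Part (c) is an immediate consequence of (a): the lower bound $f(t)\geq a(1-(\gamma-1)ca^{\gamma-1}t)^{-1/(\gamma-1)}$ blows up as $t\uparrow t_\ast$, contradicting $f\in C_{\text{loc}}([0,T))$ unless $T\leq t_\ast$. Part (d) is the classical Grönwall inequality: with $\gamma=1$ the same auxiliary function satisfies $g'\geq cg$, whence $(e^{-ct}g(t))' \geq 0$ and $g(t)\geq a e^{ct}$.

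I do not expect any real obstacle: each step is elementary, and the only mild care required is in treating the sign of $1-\gamma$ consistently and in ensuring $g$ remains strictly positive (which follows from $a>0$ and monotonicity of $g$) so that division and the substitution $g^{-\gamma}g'=(g^{1-\gamma}/(1-\gamma))'$ are legitimate. If one prefers to avoid any differentiability subtleties near $t=0$, one can instead apply a standard ODE comparison theorem directly to $g$ and the solution $y$ of $\dot y=cy^\gamma$, $y(0)=a$, but the separation-of-variables route is more self-contained and yields the explicit constant $(\gamma-1)ca^{\gamma-1}$ without extra work.
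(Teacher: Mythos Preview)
Your proof is correct and follows the standard comparison/separation-of-variables route. Note that the paper itself does not prove this lemma; it simply quotes it as Lemma~5.1 of~\cite{arous2020algorithmic}, so there is no in-paper proof to compare against, but your argument is precisely the classical one underlying such Bihari--Gr\"onwall-type bounds.
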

\section{Proof of main results} \label{proofs}

In this section, we present the proofs of Propositions~\ref{thm: strong recovery first spike GF p>2} and~\ref{thm: strong recovery all spikes GF p>2}. To simplify notation, we write the correlation functions as \(m_{ij}(\boldsymbol{X})\) instead of \(m_{ij}^{(N)} (\boldsymbol{X})\), and define the time-dependent quantities \(m_{ij}(t) \coloneqq m_{ij}(\boldsymbol{X}_t)\). Moreover, for any \(\varepsilon \in (0,1)\), we denote by \(\mathcal{T}_\varepsilon^{(ij)}\) the hitting time
\[
\mathcal{T}_\varepsilon^{(ij)} \coloneqq \min \{ t \ge 0 \colon m_{ij}(t) \ge \varepsilon \}.
\]

\subsection{Recovery of the first spike (up to a permutation)}

We begin by establishing weak recovery of the leading spike, up to a permutation. By weak recovery, we mean that with high probability, the estimator $\boldsymbol{X}_t$ achieves a nontrivial correlation with one of the columns of the ground truth matrix \(\boldsymbol{V}\) within a given time.

\begin{lem}[Weak recovery of the first spike] \label{lem: weak recovery first spike GF p>2}
Consider a sequence of initializations \(\mu_0^{(N)} \in \mathcal{M}_1(\mathcal{S}_{N,r})\) and let \(\varepsilon_N = C N^{-\frac{p-2}{2(p-1)}}\) for some constant \(C>0\). Then, for every \(n \geq 1, \gamma_0 > 0\), \(\gamma_1 > \gamma_2 \vee \gamma_3 \), and \(C_0 \in \left (0, \frac{\gamma_3/\gamma_1}{2 (1 + \gamma_3/\gamma_1)}\right)\), there exist constants \(K,C>0\) such that if \(\sqrt{M} \geq C \frac{(n+2) \gamma_0}{p \lambda_r^2 C_0 \gamma_2^{p-1}} N^{\frac{p-1}{2}-\frac{n}{2(n+1)}}\) and \(N\) is sufficiently large, 
\[
\int_{\mathcal{S}_{N,r}} \mathbb{P}_{\boldsymbol{X}^+} \left (\mathcal{T}_{\varepsilon_N}^{(i_1^\ast j_1^\ast)} \gtrsim \frac{1}{(n+2) \gamma_0}N^{-\frac{1}{2(n+1)}} \right ) \mathbf{1} \{\mathcal{C}_0^{(N)} (n,\gamma_0) \cap \mathcal{C}_1^{(N)} (\gamma_1,\gamma_2) \cap \mathcal{C}_2^{(N)}(\gamma_1,\gamma_3)\} \mathrm{d} \mu_0^{(N)}(\boldsymbol{X}) \leq e^{-KN},
\]
where the notation \(\gtrsim\) hides only absolute constants, and \((i_1^\ast, j_1^\ast)\) is the first pair in the greedy maximum selection of \(\boldsymbol{I}_0\). 
\end{lem}

Strong recovery of the first spike follows directly from Lemma~\ref{lem: weak recovery first spike GF p>2}, as stated below.

\begin{lem}[Strong recovery from weak recovery]  \label{lem: weak implies strong recovery p>2}
Let \(\varepsilon_N = C N^{-\frac{p-2}{2(p-1)}}\) for some constant \(C>0\). Then, for every \(n \ge 1\), \(\varepsilon > 0\), and \(\sqrt{M} \gtrsim N^{\frac{p-1}{2}-\frac{n}{2(n+1)}}\), there exists \(T_0 > \frac{1}{(n+2)\gamma_0} N^{-\frac{1}{2(n+1)}}\) such that for all \(T \geq T_0\) and sufficiently large \(N\),
\[
\inf_{\boldsymbol{X} \colon m_{i_1^\ast j_1^\ast}(\boldsymbol{X}) \geq \varepsilon_N} \mathbb{P}_{\boldsymbol{X}} \left (\inf_{t \in [T_0,T]} m_{i_1^\ast j_1^\ast}(\boldsymbol{X}_t) \geq 1-\varepsilon \right ) \ge 1 - \exp(-K N).
\]
\end{lem}

The proof of Lemma~\ref{lem: weak implies strong recovery p>2} follows the same strategy as~\cite[Lemma 5.2]{benarous2024langevin}, where a similar result is established for Langevin dynamics. Proposition~\ref{thm: strong recovery first spike GF p>2} then follows by combining Lemmas~\ref{lem: weak recovery first spike GF p>2} and~\ref{lem: weak implies strong recovery p>2}, using the semigroup property of the flow. This mirrors the approach taken in~\cite[Proposition 3.5]{benarous2024langevin}, where the strong Markov property is applied in the presence of Brownian noise.

We now proceed to the proof of Lemma~\ref{lem: weak recovery first spike GF p>2}.

\begin{proof}[\textbf{Proof of Lemma~\ref{lem: weak recovery first spike GF p>2}}]
Let \(\mathcal{A} = \mathcal{A}(n, \gamma_0, \gamma_1, \gamma_2, \gamma_3)\) denote the event 
\[
\mathcal{A}(n, \gamma_0,\gamma_1, \gamma_2, \gamma_3) = \left \{\boldsymbol{X}_0 \sim \mu_0^{(N)} \colon \boldsymbol{X}_0 \in \mathcal{C}_0^{(N)}(n,\gamma_0) \cap \mathcal{C}_1^{(N)}(\gamma_1,\gamma_2) \cap  \mathcal{C}_2^{(N)}(\gamma_1,\gamma_3) \right \}.
\]
On the event \(\mathcal{C}_1^{(N)}(\gamma_1,\gamma_2)\), for every \(i,j \in [r]\), there exists \(\gamma_{ij} \in (\gamma_2,\gamma_1)\) such that 
\[
m_{ij}(\boldsymbol{X}_0) = \gamma_{ij}N^{-\frac{1}{2}}. 
\]
According to Definition~\ref{def: greedy operation}, we can write 
\[
\lambda_{i_1^\ast} \lambda_{j_1^\ast} \gamma_{i_1^\ast j_1^\ast}^{p-2} = \max_{1 \leq i,j \leq r} \{\lambda_i \lambda_j \gamma_{ij}^{p-2}\}.
\]
Furthermore, under the event \(\mathcal{C}_2^{(N)}(\gamma_1, \gamma_3)\), we obtain the strict inequality
\[
\lambda_{i_1^\ast} \lambda_{j_1^\ast} \gamma_{i_1^\ast j_1^\ast}^{p-2} > \left (1 + \frac{\gamma_3}{\gamma_1} \right ) \lambda_i \lambda_j \gamma_{ij}^{p-2},
\]
for all \((i,j) \neq (i_1^\ast,j_1^\ast)\). We now introduce constants \(\delta_{ij} \in (0,1)\) such that
\begin{equation} \label{eq:def_delta_ij}
\lambda_{i_1^\ast} \lambda_{j_1^\ast} \gamma_{i_1^\ast j_1^\ast}^{p-2}
= \frac{1}{\delta_{ij}} \left(1 + \frac{\gamma_3}{\gamma_1} \right) \lambda_i \lambda_j \gamma_{ij}^{p-2}.
\end{equation}

Next, for every \(i,j \in [r]\), let \(\mathcal{T}_{L_0}^{(ij)}\) denote the hitting time of the set
\[
\left \{\boldsymbol{X} \colon |L_0  m_{ij}(\boldsymbol{X})| > C_0 \sqrt{M} p\lambda_i \lambda_j m_{ij}^{p-1}(\boldsymbol{X}) \right \},
\]
where \(C_0 \in (0,\frac{1}{2})\) is a constant independent of \(N\). Note that on the event \(\mathcal{A}\)---and in particular on \(\mathcal{C}_0^{(N)}(n, \gamma_0)\)---we have
\[
|L_0  m_{ij}(\boldsymbol{X}_0)| \leq \frac{\gamma_0}{\sqrt{N}} \leq C_0 \sqrt{M} p \lambda_i \lambda_j \left (\frac{\gamma_2}{\sqrt{N}} \right)^{p-1} \le C_0 \sqrt{M} p \lambda_i \lambda_j m_{ij}^{p-1}(\boldsymbol{X}_0),
\]
provided that \(\sqrt{M} \geq \frac{\gamma_0}{C_0 p \lambda_i \lambda_j \gamma_2^{p-1}} N^{\frac{p-2}{2}}\), which holds by assumption. Therefore, by continuity of the flow \(\boldsymbol{X}_t\), we conclude that \(\mathcal{T}_{L_0}^{(ij)} > 0\) on the event \(\mathcal{A}\). We also define the hitting time \(\mathcal{T}_{L_0}\) of the set 
\[
\left \{ \boldsymbol{X} \colon \sup_{1 \leq k, \ell \leq r}\left | L_0  m_{k \ell}(\boldsymbol{X})\right | > C_0 \sqrt{M} p \lambda_{i_1^\ast}  \lambda_{j_1^\ast} m_{i_1^\ast j_1^\ast}^{p-1}(\boldsymbol{X})\right \}.
\]
It follows again by continuity that \(\mathcal{T}_{L_0} >0\), and by construction we have \(\mathcal{T}_{L_0} \leq \mathcal{T}_{L_0}^{(i_1^\ast j_1^\ast)}\).

We now fix \(i,j \in [r]\) and work under the event \(\mathcal{A}\). We introduce a first microscopic threshold $\tilde{\varepsilon}_N = \tilde \gamma N^{-\frac{1}{2}}$, where $\tilde{\gamma} >  \gamma_1$ is a constant to be determined later. Let $\mathcal{T}^{(ij)}_{\tilde{\varepsilon}_N}$ denote the hitting time of the set \(\{\boldsymbol{X} \colon m_{ij}(\boldsymbol{X}) \ge \tilde{\varepsilon}_N\}\). Since $\tilde{\gamma} >  \gamma_1$, it follows immediately that $\min_{1 \leq i,j \leq r} \mathcal{T}_{\tilde{\varepsilon}_N}^{(ij)}>0$. From Lemma~\ref{lem: evolution equation m_ij}, we have
\[
L m_{ij} = L_0  m_{ij} + \sqrt{M} p \lambda_i \lambda_j m_{ij}^{p-1} - \sqrt{M}  \frac{p}{2} \sum_{1 \leq k,\ell \leq r} \lambda_k m_{i \ell}m_{kj} m_{k \ell} (\lambda_j m_{kj}^{p-2} + \lambda_\ell m_{k \ell}^{p-2}).
\]
As a consequence, for every \(t \leq \mathcal{T}_{L_0}^{(ij)} \wedge \mathcal{T}_{L_0} \wedge \min_{1 \leq k, \ell \leq r} \mathcal{T}_{\tilde{\varepsilon}_N}^{(k \ell)}\), we obtain the comparison bounds
\begin{equation} \label{eq:comp_gen}
(1-C_0) \sqrt{M} p \lambda_i \lambda_j m_{ij}^{p-1}(t) \leq L m_{ij}(t) \leq (1+ C_0) \sqrt{M} p \lambda_i \lambda_j m_{ij}^{p-1}(t),
\end{equation}
provided that
\begin{equation} \label{eq: condition on N}
N \ge \frac{r^2\lambda_1^2 \tilde{\gamma}^{p+1}}{C_0 \lambda_r^2 \gamma_2^{p-1}}. 
\end{equation}
Since the evolution of \(m_{ij}\) under gradient flow satisfies
\[
m_{ij}(t) = m_{ij}(0) + \int_0^t Lm_{ij}(s) \mathrm{d}s,
\]
we obtain the integral inequality 
\begin{equation} \label{eq: integral inequality GF 1}
\frac{\gamma_{ij}}{\sqrt{N}} +  (1-C_0) \sqrt{M} p \lambda_i \lambda_j \int_0^t m_{ij}^{p-1}(s) \mathrm{d}s \leq m_{ij}(t) \leq \frac{\gamma_{ij}}{\sqrt{N}} +  (1+ C_0)  \sqrt{M} p \lambda_i \lambda_j \int_0^t m_{ij}^{p-1}(s) \mathrm{d}s,
\end{equation}
for every \(t \leq \mathcal{T}_{L_0}^{(ij)} \wedge \mathcal{T}_{L_0} \wedge \min_{1 \leq k, \ell \leq r} \mathcal{T}_{\tilde{\varepsilon}_N}^{(k \ell)}\). Applying items (a) and (b) of Lemma~\ref{lem: Gronwall}, we obtain the comparison inequality 
\begin{equation} \label{eq: comparison inequality GF 1}
\ell_{ij}(t) \leq m_{ij}(t) \leq u_{ij}(t),
\end{equation}
for all \(t\) in the same time interval, where the lower and upper envelope functions are given by
\begin{equation} \label{eq: function ell GF 1}
\ell_{ij}(t) = \frac{\gamma_{ij}}{\sqrt{N}} \left ( 1 -(1 - C_0) \sqrt{M} p(p-2)  \lambda_i \lambda_j \left ( \frac{\gamma_{ij}}{\sqrt{N}} \right )^{p-2} t \right )^{-\frac{1}{p-2}},
\end{equation}
and 
\begin{equation} \label{eq: function u GF 1}
u_{ij}(t) = \frac{\gamma_{ij}}{\sqrt{N}} \left (1 - (1 + C_0) \sqrt{M} p(p-2)  \lambda_i \lambda_j \ \left ( \frac{\gamma_{ij}}{\sqrt{N}} \right )^{p-2} t \right )^{-\frac{1}{p-2}},
\end{equation}
respectively. We now define \(T_{\ell,\tilde{\varepsilon}_N}^{(ij)}\) as the time at which the lower bound \(\ell_{ij}(t)\) reaches the threshold \(\tilde{\varepsilon}_N\), i.e., 
\begin{equation}  \label{eq: lower time GF 1}
T_{\ell, \tilde{\varepsilon}_N}^{(ij)} = \frac{1 - \left(\frac{\gamma_{ij}}{\tilde{\gamma}}\right)^{p-2}}{(1 - C_0) \sqrt{M} p(p-2)\lambda_i \lambda_j \left ( \frac{\gamma_{ij}}{\sqrt{N}}\right)^{p-2} } .
\end{equation}
Similarly, define \(T_{u, \tilde{\varepsilon}_N}^{(ij)}\) by the condition \(u_{ij}(T_{u, \tilde{\varepsilon}_N}^{(ij)}) = \tilde{\varepsilon}_N\), i.e.,
\begin{equation} \label{eq: upper time GF 1}
T_{u, \tilde{\varepsilon}_N}^{(ij)} =
\frac{1 - \left( \frac{\gamma_{ij}}{\tilde{\gamma}} \right)^{p-2}}
{(1 + C_0) \sqrt{M} p(p - 2) \lambda_i \lambda_j \left( \frac{\gamma_{ij}}{\sqrt{N}} \right)^{p - 2}}.
\end{equation}
Due to the scaling of \(\sqrt{M}\), both \(T_{\ell, \tilde{\varepsilon}_N}^{(ij)}\) and \(T_{u, \tilde{\varepsilon}_N}^{(ij)}\) are strictly less than one. Moreover, on the event \(\mathcal{A}\), the hitting time \(\mathcal{T}_{\tilde{\varepsilon}_N}^{(ij)}\) satisfies
\[
T_{u, \tilde{\varepsilon}_N}^{(ij)} \le \mathcal{T}_{\tilde{\varepsilon}_N}^{(ij)} \le T_{\ell, \tilde{\varepsilon}_N}^{(ij)}.
\]

Our goal is thus to show that \(\min_{1 \le i,j \le r} \mathcal{T}_{\tilde \varepsilon_N}^{(ij)} \leq \mathcal{T}_{L_0}\) and that \(\min_{1 \le i,j \le r} \mathcal{T}_{\tilde \varepsilon_N}^{(ij)} = \mathcal{T}^{(i^\ast_{1}j^\ast_{1})}_{\tilde{\varepsilon}_N}\), noting that \(\mathcal{T}_{L_0} \le \mathcal{T}_{L_0}^{(i_1^\ast j_1^\ast)}\) by definition. Choose \(\tilde{\gamma} >0\) such that
\begin{equation} \label{eq: tilde gamma GF}
\frac{1}{\delta} \geq \frac{\tilde{\gamma}^{p-2}}{\tilde{\gamma}^{p-2} - \gamma_1^{p-2}} \enspace \iff \enspace \tilde{\gamma} \geq  \left (\frac{1}{1-\delta} \right )^{\frac{1}{p-2}} \gamma_1,
\end{equation}
where \(\delta = \max_{(i,j) \neq (i_1^\ast, j_1^\ast)} \delta_{ij} \in (0,1)\), and \(\delta_{ij}\) is defined in~\eqref{eq:def_delta_ij}. Then, for every \((i,j) \ne (i_1^\ast, j_1^\ast)\), we compare the respective hitting times:
\[
\begin{split}
T_{\ell, \tilde{\varepsilon}_N}^{(i_1^\ast j_1^\ast)} & = \frac{1 - \left (\frac{\gamma_{i_1^\ast j_1^\ast}}{\tilde{\gamma}} \right )^{p-2}}{(1-C_0) \sqrt{M} p(p-2) \lambda_{i_1^\ast} \lambda_{j_1^\ast} \left ( \frac{\gamma_{i_1^\ast j_1^\ast}}{\sqrt{N}} \right)^{p-2}}   \\
& \le \frac{1}{\frac{1}{\delta_{ij}} (1+C_0) \sqrt{M} p(p-2) \lambda_i \lambda_j \left ( \frac{\gamma_{ij}}{\sqrt{N}} \right)^{p-2}}\\
& \leq \frac{1 - \left (\frac{\gamma_{ij}}{\tilde{\gamma}} \right )^{p-2}}{(1 + C_0) \sqrt{M} p(p-2) \lambda_i \lambda_j \left ( \frac{\gamma_{ij}}{\sqrt{N}} \right)^{p-2}}  = T_{u, \tilde{\varepsilon}_N}^{(ij)},
\end{split}
\]
where the first inequality follows from~\eqref{eq:def_delta_ij}, provided
\[
C_0 \le \frac{\gamma_3 / \gamma_1}{2 + \gamma_3 / \gamma_1} ,
\]
which holds by assumption since \(C_0 \le \frac{\gamma_3 / \gamma_1}{2 (1+ \gamma_3 / \gamma_1)}\), 
and the second inequality follows from~\eqref{eq: tilde gamma GF}. This shows that
\[
T_{\ell, \tilde{\varepsilon}_N}^{(i_1^\ast j_1^\ast)} \le T_{u, \tilde{\varepsilon}_N}^{(ij)}
\quad \text{for all } (i,j) \ne (i_1^\ast, j_1^\ast),
\]
and by monotonicity of the dynamics, it follows that
\[
\mathcal{T}_{\tilde{\varepsilon}_N}^{(i_1^\ast j_1^\ast)}
= \min_{1 \le k,\ell \le r} \mathcal{T}_{\tilde{\varepsilon}_N}^{(k\ell)},
\]
as soon as we can show
\[
\min_{1 \le k,\ell \le r} \mathcal{T}_{\tilde{\varepsilon}_N}^{(k\ell)}
\le \min_{1 \le k,\ell \le r} \mathcal{T}_{L_0}^{(k\ell)} \wedge \mathcal{T}_{L_0}.
\]

To achieve this, we seek an estimate for \(L_0 m_{ij}\) for every \(i,j \in [r]\). To this end, we apply Lemma~\ref{lem: bounding flows} to the function \(F_{ij}(\boldsymbol{X}) = L_0  m_{ij} (\boldsymbol{X})\). We see that if we let \(\psi_{k \ell}(\boldsymbol{X}) = \langle \boldsymbol{v}_k, \boldsymbol{x}_\ell \rangle\), \(a_{k \ell}(\boldsymbol{X}) = \sqrt{M} p\lambda_k \lambda_\ell m_{k \ell}^{p-1}(\boldsymbol{X})\) and \(U = H_0\), then condition (1) is satisfied with \(\mathbb{P}\)-probability at least \(1 - \exp(-KN)\) for every \(n \geq 1\) according to Lemma~\ref{lem: regularity H0}. The function \(F_{ij}\) is smooth and for every \(n \geq 1\) satisfies \(\norm{F_{ij}}_{\mathcal{G}^{2n}} \leq \Lambda\) with \(\mathbb{P}\)-probability at least \(1 - \exp(-K N)\) according to~\eqref{eq: bound norm L_0m_ij}, thus condition (2) is verified. Condition (3) follows by assumption on the initial data, i.e., the event \(\mathcal{C}_0^{(N)}(n,\gamma_0)\). We now verify condition (4). Fix \(k, \ell \in [r]\). Using the lower bound from the integral inequality~\eqref{eq: integral inequality GF 1}, we have
\begin{equation} \label{eq: bound a_ij GF}
\int_0^t |a_{k \ell}(s)| \mathrm{d} s \leq \frac{1}{1 - C_0} \left (m_{k \ell}(t)-\frac{\gamma_{k \ell}}{\sqrt{N}}\right) \leq \frac{1}{1-C_0} m_{k \ell}(t),
\end{equation}
for every \(t \leq \mathcal{T}_{L_0}^{(k \ell)} \wedge \mathcal{T}_{L_0} \wedge \min_{1 \le i,j \le r} \mathcal{T}_{\tilde{\varepsilon}_N}^{(ij)}\). We observe that at time \(t=0\), for every \(\xi > 0\), we have 
\[
\xi \sqrt{M} p \lambda_k \lambda_\ell \left (\ell_{k \ell}(0)\right)^{p-1} = \xi \sqrt{M} p \lambda_k \lambda_\ell \left ( \frac{\gamma_{k \ell}}{\sqrt{N}} \right)^{p-1} \geq C \xi (n+2) \gamma_0 N^{- \frac{n}{2(n+1)}} \geq \ell_{k \ell}(0),
\]
where we used the assumption \(\sqrt{M} \ge C\frac{(n+2) \gamma_0}{p \lambda_r^2 C_0 \gamma_2^{p-1}} N^{\frac{p-1}{2} - \frac{n}{2(n+1)}}\). Now, from~\eqref{eq: comparison inequality GF 1}, we know that \(m_{k\ell}(t) \ge \ell_{k\ell}(t)\) over the time interval of interest. Since \(\ell_{k \ell}(t)\) is increasing and satisfies the above inequality at \(t=0\), it follows that 
\[
m_{k \ell}(t) \leq \xi \sqrt{M} p \lambda_k \lambda_\ell m_{k \ell}^{p-1}(t),
\]
and therefore, combining with~\eqref{eq: bound a_ij GF}, we obtain
\[
\int_0^t |a_{k \ell}(s)| \mathrm{d} s \leq \frac{1}{1- C_0} m_{k \ell}(t) \leq \frac{\xi}{1-C_0} \sqrt{M} p \lambda_k \lambda_\ell m_{k \ell}^{p-1}(t) ,
\]
for every \(t \leq \mathcal{T}_{L_0}^{(k \ell)} \wedge \mathcal{T}_{L_0} \wedge \min_{1 \le i,j \le r} \mathcal{T}_{\tilde{\varepsilon}_N}^{(ij)}\). Choosing \(\xi = (1 - C_0)/2\) yields condition (4) with \(\epsilon = 1/2\). Thus, by Lemma~\ref{lem: bounding flows}, there exists a constant \(K_1 > 0\) such that on the event \(\mathcal{A}\),
\begin{equation} \label{eq: bounding flows GF}
|L_0 m_{ij}(t)| \leq K_1 \left(\frac{\gamma_0}{\sqrt{N}} \sum_{k=0}^{n-1} t^k  + t^n + 2 \sum_{1 \leq k, \ell \leq r} \int_0^t |a_{k \ell}(s)| \mathrm{d}s \right),
\end{equation}
for every \(t \leq \min_{1 \le k, \ell \le r} \mathcal{T}_{L_0}^{(k \ell)} \wedge \mathcal{T}_{L_0} \wedge \min_{1 \le k, \ell \le r} \mathcal{T}_{\tilde{\varepsilon}_N}^{(k \ell)}\), with \(\mathbb{P}\)-probability at least \(1 - \exp(-KN)\). To conclude this step, we will show that, over the same time interval,
\[
\sup_{1 \leq i,j \leq r} \vert L_0 m_{ij}(t) \vert \leq C_0 \sqrt{M} p \inf_{1 \leq i, j \leq r} \lambda_{i} \lambda_{j} m_{i j}^{p-1} (t).
\]
A sufficient condition for this is to show that each term on the right-hand side of~\eqref{eq: bounding flows GF} is bounded above by \(\frac{C_0 \sqrt{M} p}{n+2} \inf_{1 \leq k, \ell \leq r} \lambda_k \lambda_\ell m_{k \ell}^{p-1} (t)\) for all \(t \leq \min_{1 \le k, \ell \le r} \mathcal{T}_{L_0}^{(k \ell)} \wedge \mathcal{T}_{L_0} \wedge \min_{1 \le k, \ell \le r} \mathcal{T}_{\tilde{\varepsilon}_N}^{(k \ell)} \wedge 1\). We verify this term by term:
\begin{itemize}
\item[(i)] For all \(1 \leq i,j \leq r\), the lower bound in~\eqref{eq: comparison inequality GF 1} implies
\[
\frac{C_0 \sqrt{M} p \lambda_i \lambda_j}{n+2} m_{ij}^{p-1}(t) \geq \frac{C_0 \sqrt{M}  p \lambda_i \lambda_j}{n+2} \ell_{ij}^{p-1}(t) \geq \frac{C_0 \sqrt{M}  p \lambda_i \lambda_j}{n+2} \ell_{ij}^{p-1}(0).
\]
Hence, for every \(0 \leq k \leq n-1\), 
\[
\frac{C_0 \sqrt{M} p \lambda_i \lambda_j}{n+2} \left (\frac{\gamma_{ij}}{2 \sqrt{N}} \right)^{p-1} \geq C  \frac{\gamma_0}{N^{\frac{n}{2(n+1)}}} \geq K \frac{\gamma_0}{\sqrt{N}} t^k,
\]
for \(t \leq \min_{1 \le k,\ell \le r}\mathcal{T}_{L_0}^{(k \ell)}  \wedge \mathcal{T}_{L_0} \wedge \min_{1 \leq k, \ell \leq r} \mathcal{T}_{\tilde{\varepsilon}_N}^{(k \ell)} \wedge 1\). 

\item[(ii)] A sufficient condition to control the second term is given by \(F(t) \leq G(t)\), where \(F(t) = Kt^n\) and \(G(t)= \frac{C_0 \sqrt{M} p \lambda_i \lambda_j}{n+2} \ell_{ij}^{p-1}(t)\). To compare these, compute the derivatives: for any \(k \leq n\),
\[
F^{(k)}(t) = K n(n-1) \cdots (n-k+1)t^{n-k},
\]
and
\[
G^{(k)}(t) = \frac{C_0 \sqrt{M} p \lambda_i \lambda_j \prod_{i=1}^k \left ( \frac{p-1}{p-2} + (i-1) \right)}{n+2}\left(\frac{\gamma_{ij}}{2 \sqrt{N}}\right)^{p-1} \left(\frac{1}{t_\ast^{(ij)}}\right)^k \left(1-\frac{t}{t_\ast^{(ij)}}\right)^{- \left(\frac{p-1}{p-2} + k\right)},
\]
where \(t_\ast^{(ij)}\) denotes the blow-up time of \(\ell_{ij}\) which is given by
\[
t_\ast^{(ij)} \coloneqq \left [ (1 - C_0) \sqrt{M} p(p-2)  \lambda_i \lambda_j \left ( \frac{\gamma_{ij}}{2\sqrt{N}} \right )^{p-2} \right]^{-1}.
\]
For \(k \leq n-1\), we have \(G^{(k)}(0) \geq 0 = F^{(k)}(0)\). For \(k=n\), we obtain the lower bound
\[
\begin{split}
G^{(n)}(t) & \geq \frac{(\sqrt{M} p \lambda_i \lambda_j)^{n+1} C_0 (1-C_0)^n}{n+2} \left ( \frac{\gamma_{ij}}{2 \sqrt{N}}\right )^{p-1+n(p-2)} \left(1-\frac{t}{t_\ast^{(ij)}}\right)^{- \left(\frac{p-1}{p-2} + n\right)} \\
& \gtrsim C_0 (1-C_0)^n (n+2)^n \gamma_0^{n+1}\\
& \geq K n! = F^{(n)}(t),
\end{split}
\]
which holds for all $t \leq \min_{1 \le k,\ell \le r}\mathcal{T}_{L_0}^{(k \ell)}  \wedge \mathcal{T}_{L_0} \wedge \min_{1 \leq k, \ell \leq r} \mathcal{T}_{\tilde{\varepsilon}_N}^{(k \ell)} \wedge 1$. 

\item[(iii)] We control the last term as follows. According to the integral inequality~\eqref{eq: integral inequality GF 1}, on the event \(\mathcal{A}\), we have
\[
2 \sum_{1 \le k ,\ell \le r} \int_0^t |a_{k \ell}(s)| \mathrm{d} s \leq \frac{2r^2}{1-C_0} \max_{1 \le k, \ell \le r} m_{k \ell}(t) \leq \frac{2 r^2}{1-C_0} \tilde{\varepsilon}_N =  \frac{2 r^2}{1-C_0}  \frac{\tilde{\gamma}}{\sqrt{N}},
\]
for all \(t \leq \min_{1 \le k, \ell \le r} \mathcal{T}_{L_0}^{(k \ell)} \wedge \mathcal{T}_{L_0}  \wedge \min_{1 \leq k, \ell \leq r} \mathcal{T}_{\tilde{\varepsilon}_N}^{(k \ell)}\). From the lower bound in~\eqref{eq: comparison inequality GF 1}, we also have
\[
\frac{C_0  \sqrt{M} p \lambda_i \lambda_j}{n+2} m_{ij}^{p-1}(t) \geq \frac{C_0 \sqrt{M} p \lambda_i \lambda_j}{n+2} \ell_{ij}^{p-1}(t) \geq \frac{C_0 \sqrt{M} p \lambda_i \lambda_j}{n+2} \ell_{ij}^{p-1}(0),
\]
for all \(t \leq \min_{1 \le k, \ell \le r} \mathcal{T}_{L_0}^{(k \ell)}  \wedge \mathcal{T}_{L_0} \wedge \min_{1 \le k, \ell \le r} \mathcal{T}_{\tilde{\varepsilon}_N}^{(k \ell)} \wedge 1\). Using the assumption on \(\sqrt{M}\), it follows that
\[
\frac{C_0 \sqrt{M} p \lambda_i \lambda_j}{n+2} \left( \frac{\gamma_{ij}}{2\sqrt{N}} \right)^{p-1}
\ge C \frac{\gamma_0}{(1 - C_0) N^{\frac{n}{2(n+1)}}}
\ge K \frac{2r^2 \tilde{\gamma}}{(1 - C_0)\sqrt{N}},
\]
and thus the integral term is also bounded appropriately.
\end{itemize}
On the event \(\mathcal{A}\), all terms in~\eqref{eq: bounding flows GF} are controlled as desired. Hence,
\[
\min_{1 \le k,\ell \le r} \mathcal{T}_{\tilde{\varepsilon}_N}^{(k\ell)} \le \min_{k,\ell} \mathcal{T}_{L_0}^{(k\ell)} \wedge \mathcal{T}_{L_0},
\]
which implies that
\[
\mathcal{T}_{\tilde{\varepsilon}_N}^{(i_1^\ast j_1^\ast)} = \min_{1 \le k, \ell \le r} \mathcal{T}_{\tilde{\varepsilon}_N}^{(k\ell)},
\]
with \(\mathbb{P}\)-probability at least \(1 - \exp(-K N)\). That is, the correlation \(m_{i_1^\ast j_1^\ast}\) is the first to reach the microscopic threshold \(\tilde{\varepsilon}_N\).

We now show that \(m_{i_1^\ast j_1^\ast}\) remains the dominant correlation and, in particular, reaches the second threshold \(\varepsilon_N\) before any other correlation. From Lemma~\ref{lem: evolution equation m_ij}, we observe that at time \(t = \mathcal{T}_{\tilde{\varepsilon}_N}^{(i_1^\ast j_1^\ast)}\), 
\[
L m_{i_1^\ast j_1^\ast}(t) \geq (1-C_0) \sqrt{M} p\lambda_{i_1^\ast} \lambda_{j_1\ast} m_{i_1^\ast j_1^\ast}^{p-1}(t) = (1-C_0) \sqrt{M} p\lambda_{i_1^\ast} \lambda_{j_1\ast}  \left (\frac{\tilde \gamma}{\sqrt{N}} \right)^{p-1},
\]
and for every \((i,j) \neq (i_1^\ast,j_1^\ast)\),
\[
\begin{split}
L m_{ij}(t) & \leq C_0 \sqrt{M} p \lambda_{i_1^\ast} \lambda_{j_1^\ast} m_{i_1^\ast j_1^\ast}^{p-1}(t) + \sqrt{M} p \lambda_i \lambda_j m_{ij}^{p-1}(t) \\
& = C_0 \sqrt{M} p \lambda_{i_1^\ast} \lambda_{j_1^\ast}  \left (\frac{\tilde \gamma}{\sqrt{N}} \right)^{p-1}+ \sqrt{M} p \lambda_i \lambda_j m_{ij}^{p-1}(t).
\end{split}
\]
For \((i,j) \ne (i_1^\ast, j_1^\ast)\), we upper bound
\[
m_{ij}(t) \le u_{ij}(T_{\ell, \tilde{\varepsilon}_N}^{(i_1^\ast j_1^\ast)})
= \frac{\gamma_{ij}}{\sqrt{N}}  \frac{1}{(1 - \delta_{ij})^{\frac{1}{p-2}}},
\]
so that
\begin{align*}
L m_{ij}(t)
&\le C_0 \sqrt{M} p \lambda_{i_1^\ast} \lambda_{j_1^\ast} \left( \frac{\tilde{\gamma}}{\sqrt{N}} \right)^{p-1}
+ \sqrt{M} p \lambda_i \lambda_j \left( \frac{\gamma_{ij}}{\sqrt{N}} \right)^{p-1}  \frac{1}{(1 - \delta_{ij})^{\frac{p-1}{p-2}}} \\
& < C_0 \sqrt{M} p \lambda_{i_1^\ast} \lambda_{j_1^\ast} \left( \frac{\tilde{\gamma}}{\sqrt{N}} \right)^{p-1}
+ \frac{1}{1 + \gamma_3 / \gamma_1} \sqrt{M} p \lambda_{i_1^\ast} \lambda_{j_1^\ast}  \frac{\gamma_{i_1^\ast j_1^\ast}^{p-2} \gamma_{ij}}{(1 - \delta_{ij})^{\frac{p-1}{p-2}}} \left( \frac{1}{\sqrt{N}} \right)^{p-1}.
\end{align*}
We now recall~\eqref{eq: tilde gamma GF}, which ensures that \(\frac{1}{1 - \delta_{ij}} \le \frac{\tilde \gamma^{p-2}}{\gamma_1^{p-2}}\), and so
\[
\frac{\gamma_{i_1^\ast j_1^\ast}^{p-2} \gamma_{ij}}{(1 - \delta_{ij})^{\frac{p-1}{p-2}}} \le \frac{\gamma_{i_1^\ast j_1^\ast}^{p-2} \gamma_{ij}}{\gamma_1^{p-1}} \tilde{\gamma}^{p-1} < \tilde{\gamma}^{p-1}.
\]
Combining all bounds, we obtain
\[
L m_{ij}(t) < \left( C_0 + \frac{1}{1 + \gamma_3/\gamma_1} \right) \sqrt{M} p \lambda_{i_1^\ast} \lambda_{j_1^\ast} \left( \frac{\tilde{\gamma}}{\sqrt{N}} \right)^{p-1}
< L m_{i_1^\ast j_1^\ast}(t),
\]
where the last inequality uses that
\[
\frac{1}{1 + \gamma_3 / \gamma_1}  < 1 - 2C_0 \quad \iff \quad  C_0 < \frac{\gamma_3/\gamma_1}{2(1 + \gamma_3 / \gamma_1)}. 
\]
Therefore, since \(m_{i_1^\ast j_1^\ast}(t) > m_{ij}(t)\) and \(L m_{i_1^\ast j_1^\ast}(t) \geq L m_{ij}(t)\) at \(t=\mathcal{T}_{\tilde{\varepsilon}_N}^{(i_1^\ast j_1^\ast)}\), we obtain that \(m_{i_1^\ast j_1^\ast}(t) > m_{ij}(t)\) for all \(\mathcal{T}_{\tilde{\varepsilon}_N}^{(i_1^\ast j_1^\ast)} \leq t \leq \min_{1 \le k, \ell \leq r} \mathcal{T}_{L_0}^{(k \ell)} \wedge \mathcal{T}_{L_0} \wedge \min_{1 \le k,\ell \le r} \mathcal{T}_{\varepsilon_N}^{(k \ell)}\), ensuring that 
\[
\mathcal{T}_{\varepsilon_N}^{(i_1^\ast j_1^\ast)} = \min_{1 \le i,j \le r} \mathcal{T}_{\varepsilon_N}^{(i j)},
\]
with \(\mathbb{P}\)-probability at least \(1 - \exp(-KN)\), and the correlation \(m_{i_1^\ast j_1^\ast}\) is the first to reach the threshold \(\varepsilon_N\).

The last step is to show that \(\mathcal{T}_{\varepsilon_N}^{(i_1^\ast j_1^\ast)} \leq \mathcal{T}_{L_0}\) with high \(\mathbb{P}\)-probability. We first note that the bound~\eqref{eq:comp_gen} holds for \(L m_{i_1^\ast j_1^\ast}(t)\) over the time interval 
\[
t \leq \mathcal{T}_{L_0}^{(i_1^\ast j_1^\ast)} \wedge \mathcal{T}_{L_0} \wedge \min_{1 \leq k, \ell \leq r }\mathcal{T}_{\varepsilon_N}^{(k \ell)}, 
\]
provided that \(N^{\frac{p-3}{2(p-1)}} \ge \frac{r^2 C^{p+1}}{C_0}\). As a result, both the integral inequality~\eqref{eq: integral inequality GF 1} and the comparison inequality~\eqref{eq: comparison inequality GF 1} apply to \(m_{i_1^\ast j_1^\ast}(t)\) for all \( t \leq  \min_{1 \le k,\ell \le r} \mathcal{T}_{L_0}^{(k \ell)}  \wedge \mathcal{T}_{L_0} \wedge \mathcal{T}_{\varepsilon_N}^{(i_1^\ast j_1^\ast)}\). Moreover, \(a_{ij}(t)\leq a_{i_1^\ast j_1^\ast}(t)\) for every \((i,j) \neq (i_1^\ast, j_1^\ast)\) and every \(\mathcal{T}_{\tilde{\varepsilon}_N}^{(i_1^\ast j_1^\ast)} \leq t \leq  \min_{1 \le k,\ell \le r} \mathcal{T}_{L_0}^{(k \ell)}  \wedge \mathcal{T}_{L_0} \wedge \mathcal{T}_{\varepsilon_N}^{(i_1^\ast j_1^\ast)}\). Using similar computations as before, condition (4) of Lemma~\ref{lem: bounding flows} is satisfied in a slightly modified form:
\[
\int_0^t |a_{ij}(s)| \mathrm{d} s  \leq \frac{1}{2} |a_{i_1^\ast j_1^\ast}(t)|,
\]
for every \(\mathcal{T}_{\tilde{\varepsilon}_N}^{(i_1^\ast j_1^\ast)} \leq t \leq  \min_{1 \le k,\ell \le r} \mathcal{T}_{L_0}^{(k \ell)}  \wedge \mathcal{T}_{L_0} \wedge \mathcal{T}_{\varepsilon_N}^{(i_1^\ast j_1^\ast)}\). This implies that the estimate~\eqref{eq: bounding flows GF} holds in the following way: on the event \(\mathcal{A}\), 
\begin{equation} \label{eq: bounding flows GF 2}
|L_0 m_{ij}(t)| \leq K \left(\frac{\gamma_0}{\sqrt{N}} \sum_{k=0}^{n-1} t^k +t^n + 2r^2 \int_0^t |a_{i_1^\ast j_1^\ast}(s)| \mathrm{d} s \right),
\end{equation}
for \(\mathcal{T}_{\tilde{\varepsilon}_N}^{(i_1^\ast j_1^\ast)} \leq t \leq  \min_{1 \le k,\ell \le r} \mathcal{T}_{L_0}^{(k \ell)}  \wedge \mathcal{T}_{L_0} \wedge \mathcal{T}_{\varepsilon_N}^{(i_1^\ast j_1^\ast)}\), with \(\mathbb{P}\)-probability at least \(1 - \exp(-K N)\). As before, by the assumption on \(\sqrt{M}\), each term on the right-hand side of~\eqref{eq: bounding flows GF 2} can be bounded above by \(\frac{C_0  \sqrt{M} p \lambda_{i_1^\ast} \lambda_{j_1^\ast}}{n+2} m_{i_1^\ast j_1^\ast}^{p-1}(t)\) for every \(\mathcal{T}_{\tilde{\varepsilon}_N}^{(i_1^\ast j_1^\ast)} \leq t \leq  \min_{1 \le k,\ell \le r} \mathcal{T}_{L_0}^{(k \ell)}  \wedge \mathcal{T}_{L_0} \wedge \mathcal{T}_{\varepsilon_N}^{(i_1^\ast j_1^\ast)}\). This ensures that
\[
\mathcal{T}_{\varepsilon_N}^{(i_1^\ast j_1^\ast)} \le \mathcal{T}_{L_0},
\]
with high \(\mathbb{P}\)-probability. Therefore, on the event \(\mathcal{A}\), we have that \(\mathcal{T}_{\varepsilon_N}^{(i_1^\ast j_1^\ast)}  \le \mathcal{T}_{L_0}\), and we find that
\[
\mathcal{T}_{\varepsilon_N}^{(i_1^\ast j_1^\ast)} \le T_{\ell,\varepsilon_N}^{(i_1^\ast j_1^\ast)} \lesssim \frac{1}{(n+2) \gamma_0 N^{\frac{1}{2(n+1)}}},
\]
with \(\mathbb{P}\)-probability at least \(1 - \exp(-KN)\). This completes the proof of Lemma~\ref{lem: weak recovery first spike GF p>2}.
\end{proof}

\subsection{Recovery of all spikes (up to a permutation)}

We now prove Proposition~\ref{thm: strong recovery all spikes GF p>2} on the recovery of a permutation of all spikes. The argument follows the proof of~\cite[Proposition 3.6]{benarous2024langevin} in the Langevin dynamics setting. Accordingly, we highlight only the key elements that differ from the Langevin case and refer the reader to~\cite{benarous2024langevin} for the overlapping parts.

The proof proceeds through \(r\) steps, each focusing on the strong recovery of a new correlation \(m_{i_k^\ast j_k^\ast}\). For every \(\varepsilon > 0\), define the following sequence of events:
\[
\begin{split}
E_1 (\varepsilon) & = R_1 (\varepsilon) \cap \left \{ \boldsymbol{X} \colon m_{ij}(\boldsymbol{X}) \in \Theta(N^{-\frac{1}{2}}) \enspace \forall \, i \neq i_1^\ast , j\neq j_1^\ast \right \},\\
E_2 (\varepsilon) & = R_1(\varepsilon) \cap R_2(\varepsilon) \cap \left \{ \boldsymbol{X} \colon m_{ij}(\boldsymbol{X}) \in \Theta(N^{-\frac{1}{2}}) \enspace \text{for} \enspace i \neq i_1^\ast, i_2^\ast j\neq j_1^\ast, j_2^\ast \right \},\\
& \vdots \\
E_{r-1} (\varepsilon) & = \cap_{1 \leq i \leq r -1} R_i(\varepsilon) \cap \left \{ \boldsymbol{X} \colon m_{i_r^\ast j_r^\ast}(\boldsymbol{X}) \in \Theta(N^{-\frac{1}{2}})\right \},\\
E_r (\varepsilon) & = \cap_{1 \leq i \leq r -1} R_i(\varepsilon) \cap \left \{ \boldsymbol{X} \colon m_{i_r^\ast j_r^\ast}(\boldsymbol{X}) \ge 1-\varepsilon \right \},
\end{split}
\]
where \(R_k(\varepsilon)\) denotes the event of strong recovery of the \(k\)th spike in the permutation, i.e.,
\[
R_k(\varepsilon) = \left \{ \boldsymbol{X} \colon m_{i_k^\ast j_k^\ast}(\boldsymbol{X}) \geq 1-\varepsilon \enspace \textnormal{and} \enspace m_{i_k^\ast j}(\boldsymbol{X}),m_{i j_k^\ast}(\boldsymbol{X}) \lesssim \log(N)^{-\frac{1}{2}}N^{-\frac{p-1}{4}} \enspace \forall \, i\neq i_k^\ast, j \neq j_k^\ast \right \}.
\]
Here, the symbol \(\lesssim\) hides an absolute constant. We note that the final event \(E_r(\varepsilon)\) coincides with \(R(\varepsilon)\), as defined in~\eqref{eq: set strong recovery GF p>2}. Moreover, we note that, once a correlation \(m_{i_k^\ast j_k^\ast}\) reaches a macroscopic threshold \(\varepsilon\), all correlations \(m_{i_k^\ast j}\) and \(m_{i j_k^\ast}\) for \(i \neq i_k^\ast, j \neq j_k^\ast\) decrease below \(\log(N)^{-\frac{1}{2}}N^{-\frac{p-1}{4}}\). This is crucial to ensure the recovery of the subsequent correlation \(m_{i_{k+1}^\ast j_{k+1}^\ast}\).

The next lemma quantifies the sample complexity and time required to attain the event \(E_1(\varepsilon)\) from a random initialization that satisfies Condition \(1\) and Condition \(2\). 

\begin{lem} [Recovery of the first spike] \label{lem: E_1 GF}
Consider a sequence of initializations \(\mu_0^{(N)} \in \mathcal{M}_1(\mathcal{S}_{N,r})\). Then, the following holds: For every \(\gamma_1 > \gamma_2 \vee \gamma_3\), \(C_0 \in (0,\frac{\gamma_3/\gamma_1}{2(1+\gamma_3/\gamma_1)})\), and \(\varepsilon > 0\), there exist \(\Lambda = \Lambda(p, \{\lambda_i\}_{i=1}^r)>0\), \(C>0\), and \(K>0\) such that if \(\sqrt{M} \geq C\frac{\Lambda}{p \lambda_r^2 C_0 \gamma_2^{p-1}} N^{\frac{p-1}{2}}\), then for \(N\) sufficiently large,
\[
\int_{\mathcal{S}_{N,r}} \mathbb{P}_{\boldsymbol{X}^+} \left (\mathcal{T}_{E_1} \gtrsim \frac{1}{\sqrt{N}} \right ) \boldsymbol{1}\{\mathcal{C}_1^{(N)}(\gamma_1,\gamma_2) \cap \mathcal{C}_2^{(N)}(\gamma_1,\gamma_3)\} \mathrm{d} \mu_0^{(N)} (\boldsymbol{X}) \leq e ^{-KN}.
\]
\end{lem}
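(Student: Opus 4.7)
The plan is to extend the weak recovery argument of Lemma~\ref{lem: weak recovery first spike GF p>2} past the microscopic threshold \(\varepsilon_N\) while simultaneously tracking three families of correlations: (i) the target \(m_{i_1^\ast j_1^\ast}\), which must reach \(1-\varepsilon\); (ii) the crossing correlations \(m_{i_1^\ast j}\) and \(m_{i j_1^\ast}\) for \(i\neq i_1^\ast\), \(j\neq j_1^\ast\), which must decay below \(\log(N)^{-1/2} N^{-(p-1)/4}\); and (iii) the non-crossing correlations \(m_{ij}\) for \(i\neq i_1^\ast\), \(j\neq j_1^\ast\), which must remain at the typical scale \(\Theta(N^{-1/2})\). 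I would first invoke Lemma~\ref{lem: weak recovery first spike GF p>2} (whose hypotheses are met under the present Conditions 1 and 2, since \(\sqrt{M}\gtrsim N^{(p-1)/2}\) is stronger than what is needed there) to reach the stopping time \(\mathcal{T}^{(i_1^\ast j_1^\ast)}_{\varepsilon_N} \lesssim N^{-1/2}\), on which \(m_{i_1^\ast j_1^\ast} = \varepsilon_N\) and every other correlation still sits in \(\Theta(N^{-1/2})\).

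For the first family, once past \(\varepsilon_N\), the generator expansion of Lemma~\ref{lem: evolution equation m_ij} is still dominated by the drift \(\sqrt{M}\,p\lambda_{i_1^\ast}\lambda_{j_1^\ast} m_{i_1^\ast j_1^\ast}^{p-1}\). Applying Lemma~\ref{lem: bounding flows} to \(F=L_0 m_{i_1^\ast j_1^\ast}\) with \(a_{k\ell}=\sqrt{M}\,p\lambda_k\lambda_\ell m_{k\ell}^{p-1}\) and the new sample complexity \(\sqrt{M}\gtrsim N^{(p-1)/2}\) keeps \(|L_0 m_{i_1^\ast j_1^\ast}|\le C_0\,\sqrt{M}\,p\lambda_{i_1^\ast}\lambda_{j_1^\ast} m_{i_1^\ast j_1^\ast}^{p-1}\) throughout the ascent, and then Lemma~\ref{lem: Gronwall}(a) yields the blow-up of the lower-bound ODE, so that \(m_{i_1^\ast j_1^\ast}\ge 1-\varepsilon\) after an additional time of order \(N^{-1/2}\). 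For the second family, as soon as \(m_{i_1^\ast j_1^\ast}\) crosses the microscopic scale \(N^{-(p-2)/(2p)}\), the correction term \(\sqrt{M}\,\tfrac{p}{2}\lambda_{i_1^\ast}^2 m_{i_1^\ast j}\, m_{i_1^\ast j_1^\ast}^{p-1}\cdot m_{i_1^\ast j_1^\ast}\) (and its analogue for \(m_{ij_1^\ast}\)) takes over and produces a strictly negative drift whose magnitude is comparable to \(m_{i_1^\ast j_1^\ast}^p\); Lemma~\ref{lem: Gronwall}(b) then gives a decay rate that drives \(m_{i_1^\ast j}\) below \(\log(N)^{-1/2}N^{-(p-1)/4}\) by the time \(m_{i_1^\ast j_1^\ast}\) becomes macroscopic, with the noise controlled by a second application of the bounding flows.

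The third family is the delicate one. For \(i\neq i_1^\ast\), \(j\neq j_1^\ast\), the evolution of \(m_{ij}\) receives a new correction once \(m_{i_1^\ast j_1^\ast}\) exceeds roughly \(N^{-(p-3)/(2(p-1))}\), namely the terms involving \(m_{i_1^\ast j}\, m_{i_1^\ast j_1^\ast}\, m_{i j_1^\ast}(\lambda_j m_{i_1^\ast j}^{p-2}+\lambda_{j_1^\ast} m_{i_1^\ast j_1^\ast}^{p-2})\). Using the decay bound on the crossing correlations from step two together with the explicit blow-up time for \(m_{i_1^\ast j_1^\ast}\), I integrate the correction in time and show that its cumulative contribution is at most \(O(\log(N)/N)\), which is negligible compared with the initial value \(\gamma_{ij}/\sqrt{N}\). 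Combined with the drift \(\sqrt{M}\,p\lambda_i\lambda_j m_{ij}^{p-1}\sim N^{-(p-1)/2}\) and the noise bound \(|L_0 m_{ij}|\lesssim 1/\sqrt{N}\) from Lemma~\ref{lem: bounding flows} (which requires precisely \(\sqrt{M}\gtrsim N^{(p-1)/2}\) so that the rescaled drift dominates the noise), this keeps \(m_{ij}\) inside \((\gamma_2/\sqrt{N},\gamma_1/\sqrt{N})(1-o(1))\subset\Theta(N^{-1/2})\).

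The main obstacle is this third step: proving the quantitative \(O(\log(N)/N)\) bound on the cumulative correction for the non-crossing \(m_{ij}\). The argument must couple the three ODEs rather than treat them in isolation, because the magnitude of the correction acting on \(m_{ij}\) depends both on how long \(m_{i_1^\ast j_1^\ast}\) takes to escape and on how fast the crossing correlations decay. The structural form of Lemma~\ref{lem: evolution equation m_ij} makes the leading interaction term a product \(m_{i_1^\ast j}\cdot m_{ij_1^\ast}\cdot m_{i_1^\ast j_1^\ast}^{p-1}\), and the point is that the first two factors are already much smaller than \(N^{-1/2}\) by the time the last factor becomes \(\Theta(1)\), so their product integrates to a logarithmic quantity. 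Once this is established, the sample complexity \(\sqrt{M}\gtrsim N^{(p-1)/2}\) absorbed in \(\Lambda\), together with the running time \(\mathcal{T}_{E_1}\gtrsim N^{-1/2}\), follow directly. The analogue of this step for Langevin dynamics is carried out in~\cite{benarous2024langevin}; the zero-temperature specialization to gradient flow preserves the structure, so I would transfer those estimates here, replacing the Langevin noise control by Lemma~\ref{lem: bounding flows}.
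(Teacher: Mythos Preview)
Your overall three-family decomposition and the strategy of tracking (i) the target correlation to \(1-\varepsilon\), (ii) the crossing correlations down to \(\log(N)^{-1/2}N^{-(p-1)/4}\), and (iii) the non-crossing correlations stable at \(\Theta(N^{-1/2})\) is exactly right, and the paper follows the same outline, ultimately deferring the detailed estimates (including the \(O(\log(N)/N)\) bound on the cumulative correction) to the Langevin companion paper.

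However, there is a genuine gap in your noise control. You repeatedly invoke Lemma~\ref{lem: bounding flows} and begin by invoking Lemma~\ref{lem: weak recovery first spike GF p>2}, claiming its hypotheses are met under Conditions~1 and~2. This is not correct: both require Condition~0 (the event \(\mathcal{C}_0(n,\gamma_0)\), i.e., \(|L_0^k m_{ij}(\boldsymbol{X}_0)|\le \gamma_0/\sqrt{N}\)), which Lemma~\ref{lem: E_1 GF} does \emph{not} assume---note the indicator is only \(\mathbf{1}\{\mathcal{C}_1\cap\mathcal{C}_2\}\). The bounding-flows machinery is therefore unavailable here, and your claimed bound \(|L_0 m_{ij}|\lesssim N^{-1/2}\) has no justification under the stated hypotheses.

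The paper's resolution is the key point you are missing: at the stronger sample complexity \(\sqrt{M}\gtrsim N^{(p-1)/2}\), one can dispense with bounding flows entirely and use instead the crude uniform estimate \(\|L_0 m_{ij}\|_\infty\le\Lambda\) from~\eqref{eq: bound norm L_0m_ij}, which holds with \(\mathbb{P}\)-probability \(1-e^{-KN}\) regardless of the initialization. Since \(C_0\sqrt{M}\,p\lambda_i\lambda_j m_{ij}^{p-1}(t)\gtrsim \Lambda\) already at the initial scale \(m_{ij}\sim\gamma_2/\sqrt{N}\), the drift dominates the noise uniformly, and the comparison inequality \(\ell_{ij}\le m_{ij}\le u_{ij}\) follows directly without any condition on \(L_0^k m_{ij}(\boldsymbol{X}_0)\). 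This is precisely why the sample complexity in Lemma~\ref{lem: E_1 GF} is \(N^{(p-1)/2}\) rather than the sharper \(N^{(p-2)/2}\) of Lemma~\ref{lem: weak recovery first spike GF p>2}: you trade threshold sharpness for the freedom to drop Condition~0. Once you replace every appeal to bounding flows by this crude bound, the rest of your plan goes through as in the paper.
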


Compared to Lemma~\ref{lem: weak recovery first spike GF p>2}, this result ensures not only the recovery of the leading spike direction, but also suppression of all entries sharing the same row or column index, along with the stability of all other correlations---thereby preparing the system for the next step in the recovery sequence. Once the set \(E_1\) is attained, reaching \(E_2\) follows directly. More generally, assuming that the \((k-1)\)st event \(E_{k-1}(\varepsilon)\) holds, we now show that the system reaches \(E_k (\varepsilon)\) with high probability.

\begin{lem}[Inductive recovery step] \label{lem: E_2 GF}
For every \(\gamma_1 >\gamma_2 \vee \gamma_3\), \(C_0 \in (0,\frac{1}{2})\), and \(\varepsilon > 0\), there exist \(\Lambda = \Lambda(p, \{\lambda_i\}_{i=1}^r)>0\), \(C>0\), and \(K>0\) such that if \(\sqrt{M} \gtrsim \frac{\Lambda}{p \lambda_r^2 C_0 \gamma_2^{p-1}} N^{\frac{p-1}{2}}\), then there exists \(T_k > T_{k-1}\) (with \(T_0 = \mathcal{T}_{E_1}\)) such that for every \(T > T_k\) and \(N\) sufficiently large,
\[
\inf_{\boldsymbol{X}_0 \in E_{k-1}(\varepsilon)} \mathbb{P}_{\boldsymbol{X}_0} \left ( \inf_{t \in [T_k, T]} \boldsymbol{X}_t \in E_k(\varepsilon) \right ) \geq 1 - e^{-KN}.
\]
\end{lem}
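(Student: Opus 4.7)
The plan is to run, on the reduced sub-block of correlations indexed by the not-yet-recovered rows and columns, essentially the same argument used for Lemma~\ref{lem: weak recovery first spike GF p>2}, while simultaneously verifying stability of the already-recovered block. Fix $\boldsymbol{X}_0 \in E_{k-1}(\varepsilon)$: the recovered correlations $m_{i_\ell^\ast j_\ell^\ast}$ ($\ell<k$) lie within $\varepsilon$ of $1$, the crossover terms $m_{i_\ell^\ast j}, m_{i j_\ell^\ast}$ are at scale $\log(N)^{-1/2} N^{-(p-1)/4}$, and the remaining entries are at the typical scale $\Theta(N^{-1/2})$. By the greedy selection procedure applied to $\boldsymbol{I}_0$, the largest product $\lambda_i \lambda_j m_{ij}(0)^{p-2}$ among pairs avoiding $\{(i_\ell^\ast, j_\ell^\ast)\}_{\ell<k}$ is attained at $(i_k^\ast, j_k^\ast)$.

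\smallskip

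The first step is to establish stability of the recovered block over the whole time window under consideration. On the high-probability event of Lemma~\ref{lem: regularity H0} one has $\norm{L_0 m_{ij}}_\infty \leq \Lambda$ by~\eqref{eq: bound norm L_0m_ij}. Expanding $L m_{i_\ell^\ast j_\ell^\ast}$ via Lemma~\ref{lem: evolution equation m_ij} and isolating the dominant restoring term $\sqrt{M}\, p \lambda_{i_\ell^\ast}^2 m_{i_\ell^\ast j_\ell^\ast}^{p-1}(1 - m_{i_\ell^\ast j_\ell^\ast}^2)$, one checks that this drift pulls $m_{i_\ell^\ast j_\ell^\ast}$ toward $1$ as long as the crossover terms and cross-block inner products stay small, while the evolution equations for $m_{i_\ell^\ast j}$ and $m_{i j_\ell^\ast}$ are strictly contracting once $m_{i_\ell^\ast j_\ell^\ast}$ is macroscopic. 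A Gr\"onwall-type comparison via Lemma~\ref{lem: Gronwall}, combined with the bound $\norm{L_0 m_{ij}}_\infty \leq \Lambda$ and the sample-complexity assumption $\sqrt{M} \gtrsim N^{(p-1)/2}$, then yields persistence of $m_{i_\ell^\ast j_\ell^\ast}(t) \geq 1 - \varepsilon$ and $|m_{i_\ell^\ast j}(t)|, |m_{i j_\ell^\ast}(t)| \lesssim \log(N)^{-1/2} N^{-(p-1)/4}$ over the target horizon with probability at least $1 - e^{-KN}$.

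\smallskip

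The second step is to run the greedy cascade on the sub-block of microscopic correlations. The population drift for $m_{i_k^\ast j_k^\ast}$ is $\sqrt{M}\, p \lambda_{i_k^\ast} \lambda_{j_k^\ast} m_{i_k^\ast j_k^\ast}^{p-1}$ up to a correction contributed by the other sub-block entries and by the recovered-block interactions, which the first step bounds by a small fraction of the drift. One therefore obtains integral inequalities of the form~\eqref{eq: integral inequality GF 1} and comparison bounds $\ell_{ij}(t) \leq m_{ij}(t) \leq u_{ij}(t)$ with $\ell_{ij}, u_{ij}$ as in~\eqref{eq: function ell GF 1}--\eqref{eq: function u GF 1}, now restricted to $(i,j)$ in the sub-block. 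The separation Condition~2, inherited from the initialization, ensures that $(i_k^\ast, j_k^\ast)$ is the pair with the strictly largest effective drift within the sub-block and hence the first to cross the microscopic threshold $\varepsilon_N = N^{-(p-2)/(2(p-1))}$. The same bootstrap used in Lemma~\ref{lem: weak recovery first spike GF p>2} (intermediate threshold $\tilde{\varepsilon}_N$, uniqueness of escape from mediocrity) then shows that $m_{i_k^\ast j_k^\ast}$ is the unique sub-block correlation to grow, strong recovery follows as in~\cite[Lemma 5.2]{benarous2024langevin}, and the decay of the new crossover terms $m_{i_k^\ast j}, m_{i j_k^\ast}$ to the scale $\log(N)^{-1/2} N^{-(p-1)/4}$ is driven by the correction term once $m_{i_k^\ast j_k^\ast}$ becomes macroscopic.

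\smallskip

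The main obstacle compared with the first-spike case is the loss of the improved noise scaling that was available in Lemma~\ref{lem: weak recovery first spike GF p>2}. There, Condition~$0$ at level $n$ allowed bounding $|L_0 m_{ij}|$ by $\gamma_0/\sqrt{N}$ close to initialization, which in turn yielded the reduced sample complexity $N^{(p-1)/2 - n/(2(n+1))}$. In the inductive step the configuration is no longer fresh and we cannot invoke an analogous bound; only the crude estimate $|L_0 m_{ij}| \leq \Lambda$ from~\eqref{eq: bound norm L_0m_ij} is available. At the microscopic scale $m_{ij} \sim N^{-1/2}$, forcing $\sqrt{M}\, p \lambda_i \lambda_j m_{ij}^{p-1} \gtrsim \Lambda$ then requires $\sqrt{M} \gtrsim N^{(p-1)/2}$, which dictates the stated threshold $M \gtrsim N^{p-1}$ and the running time $T_k - T_{k-1} \gtrsim 1/\sqrt{N}$. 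The bulk of the technical verification reduces to checking, exactly as in the proof of Lemma~\ref{lem: weak recovery first spike GF p>2}, that under this scaling the crude noise bound is dominated by the population drift throughout the relevant time window and that the recovered-block contributions to the correction term remain negligible compared to the drift of $m_{i_k^\ast j_k^\ast}$.
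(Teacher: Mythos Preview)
Your proposal is correct and follows essentially the same approach as the paper: establish stability of the already-recovered block, then rerun the first-spike argument on the remaining sub-block using the crude uniform noise bound $\norm{L_0 m_{ij}}_\infty \le \Lambda$, which is precisely what forces the $\sqrt{M}\gtrsim N^{(p-1)/2}$ threshold. The only minor discrepancy is that the paper refers back to Lemma~\ref{lem: E_1 GF} (which already uses the crude bound and hence needs no intermediate-threshold bootstrap or bounding-flows machinery) rather than to Lemma~\ref{lem: weak recovery first spike GF p>2}, so your mention of the $\tilde{\varepsilon}_N$ step is superfluous here but harmless.
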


Proposition~\ref{thm: strong recovery all spikes GF p>2} then follows by iteratively applying Lemmas~\ref{lem: E_1 GF} and~\ref{lem: E_2 GF}, using the semi-group property of the flow. We direct the reader to the proof of Proposition 3.5 in~\cite{benarous2024langevin} for a proof. 
It remains to prove Lemmas~\ref{lem: E_1 GF} and~\ref{lem: E_2 GF}.

\begin{proof}[\textbf{Proof of Lemma~\ref{lem: E_1 GF}}]
Let \(\mathcal{A} = \mathcal{A}(\gamma_1,\gamma_2,\gamma_3)\) denote the event 
\[
\mathcal{A} (\gamma_1,\gamma_2,\gamma_3) = \left \{ \boldsymbol{X}_0 \sim \mu_0^{(N)} \colon \boldsymbol{X}_0 \in \mathcal{C}_1^{(N)} (\gamma_1,\gamma_2)  \cap \mathcal{C}_2^{(N)} (\gamma_1,\gamma_3)\right \}.
\]
We note that on \(\mathcal{C}_1^{(N)}(\gamma_1,\gamma_2)\), for every \(i,j \in [r]\) there exists \(\gamma_{ij} \in (\gamma_2,\gamma_1)\) such that \(m_{ij}(0) = \gamma_{ij}N^{-\frac{1}{2}}\). In particular, according to Definition~\ref{def: greedy operation}, we have 
\[
\lambda_{i_1^\ast} \lambda_{j_1^\ast} \gamma_{i_1^\ast j_1^\ast}^{p-2} \ge \lambda_{i_2^\ast }\lambda_{j_2^\ast} \gamma_{i_2^\ast j_2^\ast}^{p-2} \geq \cdots \geq \lambda_{i_r^\ast} \lambda_{j_r^\ast} \gamma_{i_r^\ast j_r^\ast}^{p-2}.
\]
Moreover, on the event \(\mathcal{C}_2^{(N)}(\gamma_1,\gamma_3)\), 
\begin{equation} \label{eq: separation cond GF}
\lambda_{i_1^\ast} \lambda_{j_1^\ast} \gamma_{i_1^\ast j_1^\ast}^{p-2} > \left (1 + \frac{\gamma_3}{\gamma_1} \right) \lambda_i \lambda_j \gamma_{ij}^{p-2},
\end{equation}
for every \((i,j) \neq (i_1^\ast,j_1^\ast)\). 

In the following, we fix \(i,j \in [r]\) and place ourselves on the event \(\mathcal{A}\). In a similar fashion as in the proof of Lemma~\ref{lem: weak recovery first spike GF p>2}, we first consider a microscopic threshold $\tilde{\varepsilon}_N = \frac{\tilde{\gamma}}{\sqrt{N}}$ with $\tilde{\gamma} >  \gamma_1$ and show that $m_{i^{\ast}_{1}j^{\ast}_{1}}$ is the first correlation to reach this threshold under the chosen scaling for $\sqrt{M}$. The only difference lies in the fact that, for this threshold value of $\sqrt{M}$, there is no need to use the bounding flow from Lemma~\ref{lem: bounding flows}, and the uniform bound from Lemma~\ref{lem: regularity H0} is sufficient. As this uniform bound will be repeated below, we do not write this first part of the proof explicitly. Thus, we directly move to the threshold \(\varepsilon_N =C N^{-\frac{p-2}{2(p-1)}}\) with \(C>0\). Let \(\mathcal{T}_{\varepsilon_N}^{(ij)}\) denote the hitting time of the set \(\{\boldsymbol{X} \colon m_{ij}(\boldsymbol{X}) \geq \varepsilon_N\}\). According to the generator expansion by Lemma~\ref{lem: evolution equation m_ij}, i.e., 
\[
L m_{ij} = L_0 m_{ij} +  \sqrt{M} p \lambda_i \lambda_j m_{ij}^{p-1} - \sqrt{M} \frac{p}{2} \sum_{1 \leq k,\ell \leq r} \lambda_k m_{i \ell}m_{kj} m_{k \ell} (\lambda_j m_{kj}^{p-2} + \lambda_\ell m_{k \ell}^{p-2}),
\]
we have 
\[
- \norm{L_0 m_{i^{\ast}_{1}j^{\ast}_{1}}}_\infty + \sqrt{M} p \lambda_i \lambda_j m_{i^{\ast}_{1}j^{\ast}_{1}}^{p-1}(t) \leq L m_{i^{\ast}_{1}j^{\ast}_{1}}(t) \leq \norm{L_0 m_{i^{\ast}_{1}j^{\ast}_{1}}}_\infty + \sqrt{M}  p \lambda_i \lambda_j m_{i^{\ast}_{1}j^{\ast}_{1}}^{p-1}(t),
\]
for \(t \leq \min_{1 \leq k, \ell \leq r} \mathcal{T}_{\varepsilon_N}^{(k \ell)}\). Furthermore, for the other correlations, i.e., for $(i,j) \neq (i^{\ast}_{1},j^{\ast}_{1})$, the following upper bound holds:
\[ 
L m_{ij}(t) \leq \norm{L_0 m_{i_{1}j_{1}}}_\infty + \sqrt{M}  p \lambda_i \lambda_j m_{ij}^{p-1}(t).
\]
According to Lemma~\ref{lem: regularity H0} and especially to~\eqref{eq: bound norm L_0m_ij}, we have that \(\norm{L_0 m_{ij}}_\infty \leq \Lambda \) for some constant \(\Lambda = \Lambda(p,n, \{\lambda_i\}_{i=1}^r)\), with \(\mathbb{P}\)-probability at least \(1 - \exp(-K N)\). This implies that for \(t \leq \min_{1 \leq k, \ell \leq r} \mathcal{T}_{\varepsilon_N}^{(k \ell)}\),
\[
C_0 \sqrt{M} p \lambda_i \lambda_j m_{ij}^{p-1}(t) \gtrsim \frac{\Lambda}{\gamma_2^{p-1}} N^{\frac{p-1}{2}} m_{ij}^{p-1}(t) \geq \Lambda \geq \norm{L_0 m_{ij}}_\infty,
\]
for some constant \(C_0 \in (0,1)\), where we used the facts that \(\sqrt{M} \gtrsim \frac{\Lambda}{  p \lambda_r^2 C_0 \gamma_2^{p-1}}N^{\frac{p-1}{2}}\) and that \(m_{ij}(t) \geq \gamma_2N^{-\frac{1}{2}}\). We obtain the integral inequality given by
\[
\begin{split}
m_{ij}(t) & \leq \frac{\gamma_{ij}}{\sqrt{N}} + (1+C_0)   \sqrt{M} p \lambda_i \lambda_j \int_0^t m_{ij}^{p-1}(s)  \mathrm{d} s,\\
m_{i_1^\ast j_1^\ast}(t)& \ge \frac{\gamma_{i_1^\ast j_1^\ast}}{\sqrt{N}} + (1 - C_0)   \sqrt{M} p \lambda_{i_1^\ast} \lambda_{j_1^\ast} \int_0^t m_{i_1^\ast j_1^\ast}^{p-1}(s) \mathrm{d}s ,
\end{split}
\]
for every \(t \leq \min_{1 \leq k, \ell \leq r} \mathcal{T}_{\varepsilon_N}^{(k \ell)}\), with \(\mathbb{P}\)-probability at least \(1 - \exp(-K N)\). Lemma~\ref{lem: Gronwall} then yields the comparison inequality:
\[
\begin{split}
m_{ij}(t) & \leq u_{ij}(t),\\
m_{i_1^\ast j_1^\ast}(t)& \ge \ell_{i_1^\ast j_1^\ast}(t),
\end{split}
\]
for every \(t \leq \min_{1 \leq k, \ell \leq r} \mathcal{T}_{\varepsilon_N}^{(k \ell)}\), where 
\[
u_{ij}(t) = \frac{\gamma_{ij}}{\sqrt{N}} \left ( 1-  (1+C_0)   \sqrt{M} p(p-2) \lambda_i \lambda_j\left ( \frac{\gamma_{ij}}{\sqrt{N}}\right )^{p-2} t \right )^{-\frac{1}{p-2}},
\]
and
\[
\ell_{i_1^\ast j_1^\ast}(t) = \frac{\gamma_{i_1^\ast j_1^\ast}}{\sqrt{N}} \left ( 1- (1-C_0)   \sqrt{M} p(p-2) \lambda_{i_1^\ast} \lambda_{j_1^\ast} \left ( \frac{\gamma_{i_1^\ast j_1^\ast}}{\sqrt{N}}\right )^{p-2} t \right )^{-\frac{1}{p-2}}.
\]
We define \(T_{\ell,\varepsilon_N}^{(i_1^\ast j_1^\ast)}\) to solve \(\ell_{i_1^\ast j_1^\ast}(T_{\ell,\varepsilon_N}^{(i_1^\ast j_1^\ast)}) =\varepsilon_N \), i.e., 
\[ 
T_{\ell,\varepsilon_N}^{(i_1^\ast j_1^\ast)} = \frac{1 - \gamma_{i_1^\ast j_1^\ast}^{p-2} N^{-\frac{p-2}{2(p-1)}}}{ (1-C_0)   \sqrt{M} p(p-2) \lambda_{i_1^\ast} \lambda_{j_1^\ast} \left (\frac{ \gamma_{i_1^\ast j_1^\ast}}{\sqrt{N}}\right )^{p-2}}.
\]
Similarly, for every \(i,j \in [r]\), we let \(T_{u,\varepsilon_N}^{(ij)}\) denote the time such that \(u_{ij}(T_{u,\varepsilon_N}^{(ij)}) = \varepsilon_N\), i.e., 
\begin{equation*} 
T_{u,\varepsilon_N}^{(ij)} = \frac{1 - \gamma_{ij}^{p-2} N^{-\frac{p-2}{2(p-1)}}}{ (1+C_0)   \sqrt{M} p(p-2) \lambda_i \lambda_j \left (\frac{\gamma_{ij}}{\sqrt{N}}\right )^{p-2}}.
\end{equation*}
We observe that for every \(i,j \in [r], (i,j) \neq (i_1^\ast, j_1^\ast)\), 
\[
T_{\ell, \varepsilon_N}^{(i_1^\ast j_1^\ast)} \leq T_{u, \varepsilon_N}^{(ij)},
\]
provided \(N\) sufficiently large and \(C_0 < \frac{\gamma_3/\gamma_1}{2 + \gamma_3/\gamma_1}\). In fact, together with~\eqref{eq: separation cond GF} yields 
\[
(1-C_0) \lambda_{i_1^\ast} \lambda_{j_1^\ast} \gamma_{i_1^\ast j_1^\ast}^{p-2} > \left (1 - \frac{\gamma_3/\gamma_1}{2 + \gamma_3/\gamma_1} \right ) \left (1 + \frac{\gamma_3}{\gamma_1} \right ) \lambda_i \lambda_j \gamma_{ij}^{p-2} > (1+C_0) \lambda_i \lambda_j \gamma_{ij}^{p-2}.
\]
As a consequence, on the event \(\mathcal{A}\), we have that 
\[
\mathcal{T}_{\varepsilon_N}^{(i_1^\ast j_1^\ast)} = \min_{1 \leq k, \ell \leq r} \mathcal{T}_{\varepsilon_N}^{(k \ell)}
\]
with \(\mathbb{P}\)-probability at least \(1-\exp(- K N)\), that is, \(m_{i_1^\ast j_1^\ast}\) is the first correlation that reaches the threshold \(\varepsilon_N\). We therefore have that on the event \(\mathcal{A}\), 
\[
\mathcal{T}_{\varepsilon_N}^{(i_1^\ast j_1^\ast)} \leq T_{\ell, \varepsilon_N}^{(i_1^\ast j_1^\ast)} \lesssim \frac{1}{\sqrt{N}} ,
\]
with \(\mathbb{P}\)-probability at least \(1 -\exp(-K N)\). Furthermore, we observe that as \(m_{i_1^\ast j_1^\ast}(t)\) exceeds \(\varepsilon_N\), the other correlations are still on the scale \(\Theta(N^{-\frac{1}{2}})\). Indeed, since \(\mathcal{T}_{\varepsilon_N}^{(i_1^\ast j_1^\ast)} \leq T_{\ell, \varepsilon_N}^{(i_1^\ast j_1^\ast)}\) and \(u_{ij}\) is a monotone increasing function, on the event \(\mathcal{A}\) we can upper bound \(m_{ij}(\mathcal{T}_{\varepsilon_N}^{(i_1^\ast j_1^\ast)})\) by \(u_{ij}(\mathcal{T}_{\varepsilon_N}^{(i_1^\ast j_1^\ast)}) \leq u_{ij}(T_{\ell, \varepsilon_N}^{(i_1^\ast j_1^\ast)})\) and we find that
\[
\begin{split}
u_{ij}(T_{\ell, \varepsilon_N}^{(i_1^\ast j_1^\ast)}) & = \frac{\gamma_{ij}}{\sqrt{N}} \left ( 1 -  \frac{(1+C_0) \lambda_i \lambda_j \gamma_{ij}^{p-2}}{(1-C_0)\lambda_{i_1^\ast} \lambda_{j_1^\ast} \gamma_{i_1^\ast j_1^\ast}^{p-2}}  \left (1 - \gamma_{i_1^\ast j_1^\ast}^{p-2} N^{-\frac{p-2}{2(p-1)}}\right)  \right )^{-\frac{1}{p-2}}\\
& \leq \frac{\gamma_{ij}}{\sqrt{N}} \left ( 1 - \frac{1+C_0}{1-C_0} \frac{\delta_{ij}}{1+\gamma_3/\gamma_1} \right )^{-\frac{1}{p-2}} = \frac{\gamma_{ij}}{\sqrt{N}} \frac{1}{(1-\delta_{ij})^{\frac{1}{p-2}}},
\end{split}
\]
where \(\delta_{ij} \in (0,1)\) is defined as \(\lambda_{i_1^\ast} \lambda_{j_1^\ast} \gamma_{i_1^\ast j_1^\ast}^{p-2} = \frac{1}{\delta_{ij}} (1+\gamma_3/\gamma_1) \lambda_i \lambda_j \gamma_{ij}^{p-2}\). Therefore, on the event \(\mathcal{A}\), we have that \(m_{ij}(\mathcal{T}_{\varepsilon_N}^{(i_1^\ast j_1^\ast)}) = \gamma_{ij}' N^{-\frac{1}{2}}\), where \(\gamma_{ij}'>0\) is a constant of order one. \\

From this point onward, the proof of Lemma~\ref{lem: E_1 GF} is identical to the proof of~\cite[Lemma 5.3]{benarous2024langevin}. In particular, we first prove that \(m_{i_1^\ast j_1^\ast}\) attains \(1 - \varepsilon\) for \(\varepsilon \in (0,1)\) with high \(\mathbb{P}\)-probability. Next, we show that the correlation \(m_{i_1^\ast j}\) and \(m_{i j_1^\ast}\) for \(i \neq i_1^\ast, j\neq j_1^\ast\) begin to decrease as \(m_{i_1^\ast j_1^\ast}\) exceeds the threshold \(N^{-\frac{p-2}{2p}}\) and they decrease below \(\frac{1}{\sqrt{\log(N)} N^{\frac{p-1}{4}}}\) with high \(\mathbb{P}\)-probability. Finally, we study the evolution of \(m_{ij}(t)\) for \(i \neq i_1^\ast, j \neq j_1^\ast\) as \(t \geq \mathcal{T}_{\varepsilon_N}^{(i_1^\ast j_1^\ast)}\). We show that as \(m_{i_1^\ast j_1^\ast}\) crosses \(N^{-\frac{p-3}{2(p-1)}}\), the correlations are decreasing until \(m_{i_1^\ast j}\) and \(m_{i j_1^\ast}\) are sufficiently small, ensuring that the decrease is at most by a constant and that \(m_{ij}\) scale as \(N^{-\frac{1}{2}}\), as strong recovery of the first spike is achieved. We therefore obtain that on the initial event \(\mathcal{A}\),
\[
\mathcal{T}_{E_1(\varepsilon)} \lesssim \frac{1}{\sqrt{N}},
\]
with \(\mathbb{P}\)-probability at least \(1 - \exp(-K N)\), thus completing the proof.
\end{proof}

It remains to show Lemma~\ref{lem: E_2 GF}.

\begin{proof}[\textbf{Proof of Lemma~\ref{lem: E_2 GF}}]
We prove the statement for \(k=2\). Let \(\varepsilon>0\) and assume that \(\boldsymbol{X}_0 \in E_1(\varepsilon)\). We show that the evolution of the correlations \(m_{i_1^\ast j_1^\ast}\) and \(m_{i_1^\ast j}, m_{i j_1^\ast}\) for \(i \neq i_1^\ast, j \neq j_1^\ast\) are stable for all \(t \ge 0\), similarly as what done with Langevin dynamics in~\cite{benarous2024langevin} for the proof of Lemma 5.3. 

We therefore look at the evolution of the correlations \(m_{ij}\) for \(i \neq i_1^\ast, j \neq j_1^\ast\). Since \(\boldsymbol{X}_0 \in E_1(\varepsilon)\) we have that \(m_{ij}(0) = \gamma_{ij} N^{-\frac{1}{2}}\) for some order-\(1\) constant \(\gamma_{ij}>0\). Let \(\varepsilon_N = C N^{-\frac{p-2}{2(p-1)}}\) with \(C>0\). By the generator expansion from Lemma~\ref{lem: evolution equation m_ij}, i.e., 
\[
L m_{ij} = L_0 m_{ij} + \sqrt{M} p \lambda_i \lambda_j m_{ij}^{p-1} - \sqrt{M} \frac{p}{2} \sum_{1 \leq k,\ell \leq r} \lambda_k m_{i \ell}m_{kj} m_{k \ell} (\lambda_j m_{kj}^{p-2} + \lambda_\ell m_{k \ell}^{p-2}),
\]
we see that for every \(i \neq i_1^\ast,j \neq j_1^\ast\), 
\[
-\norm{L_0 m_{ij}}_\infty +\sqrt{M} p \lambda_i \lambda_j m_{ij}^{p-1}(t) \le L m_{ij}(t) \leq \norm{L_0 m_{ij}}_\infty +\sqrt{M} p \lambda_i \lambda_j m_{ij}^{p-1}(t),
\]
for all \(t \le \min_{i \neq i_1^\ast, j_1^\ast} \mathcal{T}_{\varepsilon_N}^{(ij)}\). Indeed, the terms associated with \(m_{i_1^\ast j_1^\ast}\) in the generator expansion are also accompanied by \(m_{i_1^\ast j}\) and \(m_{i j_1^\ast}\) which make that globally they are small compared to the term \(\sqrt{M} p \lambda_i \lambda_j m_{ij}^{p-1}\), for \(N\) sufficiently large. We can therefore proceed exactly as done in the proof of Lemma~\ref{lem: E_1 GF}. In particular, the greedy maximum selection gives that \(\lambda_{i_2^\ast} \lambda_{j_2^\ast} \gamma_{i_2^\ast j_2^\ast}^{p-2} > C \lambda_i \lambda_j \gamma_{ij}^{p-2}\) for every \(i,j \in [r], i \neq i_1^\ast, j \neq j_1^\ast\) and some constant \(C >1\). This shows that there exists \(T_2 > \mathcal{T}_{E_1(\varepsilon)}\) such that for all \(T > T_2\),
\[
\inf_{\boldsymbol{X}_0 \in E_1(\varepsilon)} \mathbb{P}_{\boldsymbol{X}_0} \left ( \inf_{t \in [T_2, T]} \boldsymbol{X}_t \in E_2(\varepsilon) \right ) \geq 1 - \exp(-K N)
\]
with \(\mathbb{P}\)-probability at least \(1 - \exp(-K N)\), provided \(N\) is sufficiently large.
\end{proof}

\appendix

\section{Concentration properties of the uniform measure on the Stiefel manifold} \label{appendix: invariant measure}

In this section, we study the concentration and anti-concentration properties of the uniform measure \(\mu_{N \times r}\) on the normalized Stiefel manifold \(\mathcal{S}_{N,r}\). Recall that the correlations are defined by \(m_{ij}^{(N)}(\boldsymbol{X}) = \frac{1}{N}(\boldsymbol{V}^\top \boldsymbol{X})_{ij} =  \frac{1}{N} \langle \boldsymbol{v}_i, \boldsymbol{x}_j \rangle\).

\begin{lem} \label{lem: concentration GF}
Let \(\boldsymbol{X} \sim \mu_{N \times r}\). Then, there exist constants \(C(r), c(r) > 0\), depending only on \(r\), such that for every \(t > 0\) and every \(i,j \in [r]\),
\[
\mu_{N \times r} \left( \left| m_{ij}^{(N)}(\boldsymbol{X}) \right| > t \right) \leq C(r) \exp\left(-c(r) N t^2\right).
\]
\end{lem}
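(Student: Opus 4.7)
The plan is to reduce the statement to classical concentration of a linear functional on a sphere, by using the rotational invariance of $\mu_{N\times r}$ together with a sequential Markovian construction of the uniform measure on the Stiefel manifold.

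Since $\mu_{N\times r}$ is invariant under the left action of $O(N)$ and $\boldsymbol{v}_1, \ldots, \boldsymbol{v}_r$ are orthogonal with common norm $\sqrt{N}$, there exists an orthogonal transformation sending $\boldsymbol{v}_i$ to $\sqrt{N}\boldsymbol{e}_i$; applying it to both $\boldsymbol{V}$ and $\boldsymbol{X}$, we may assume without loss of generality that $\boldsymbol{v}_i = \sqrt{N}\boldsymbol{e}_i$. Writing $\boldsymbol{u}_j = \boldsymbol{x}_j/\sqrt{N}$, the matrix $\boldsymbol{U} = [\boldsymbol{u}_1, \ldots, \boldsymbol{u}_r]$ is uniformly distributed on the (unnormalized) Stiefel manifold $V_r(\mathbb{R}^N) = \{\boldsymbol{U} \in \mathbb{R}^{N\times r} : \boldsymbol{U}^\top \boldsymbol{U} = I_r\}$, and the correlation reduces to a single entry,
\[
m_{ij}^{(N)}(\boldsymbol{X}) \;=\; \tfrac{1}{N}\langle \boldsymbol{v}_i, \boldsymbol{x}_j\rangle \;=\; (\boldsymbol{u}_j)_i.
\]
By uniqueness of the bi-invariant probability measure, $\boldsymbol{U}$ admits the Markovian construction in which $\boldsymbol{u}_1 \sim \mathrm{Unif}(\mathbb{S}^{N-1})$ and, for each $j\ge 2$, $\boldsymbol{u}_j$ is conditionally uniform on the unit sphere of $W_j := \mathrm{span}(\boldsymbol{u}_1, \ldots, \boldsymbol{u}_{j-1})^{\perp}$, whose dimension $d_j = N-j+1$ satisfies $d_j \ge N-r+1 \ge N/2$ for $N\ge 2r$.

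Let $\Pi_j$ denote the orthogonal projection onto $W_j$. Since $\boldsymbol{u}_j \in W_j$ and $\|\Pi_j\boldsymbol{e}_i\| \le \|\boldsymbol{e}_i\| = 1$, I can write $(\boldsymbol{u}_j)_i = \langle \Pi_j\boldsymbol{e}_i, \boldsymbol{u}_j\rangle$, and the classical Lévy concentration on $\mathbb{S}^{d_j-1}$ gives, conditionally on $\boldsymbol{u}_1, \ldots, \boldsymbol{u}_{j-1}$,
\[
\mathbb{P}\!\left(|(\boldsymbol{u}_j)_i| > t \,\big|\, \boldsymbol{u}_1, \ldots, \boldsymbol{u}_{j-1}\right) \;\le\; C\exp\!\left(-c\, d_j\, (t/\|\Pi_j\boldsymbol{e}_i\|)^2\right) \;\le\; C\exp(-c\,d_j\, t^2),
\]
where the second inequality uses $\|\Pi_j\boldsymbol{e}_i\| \le 1$ (the case $\|\Pi_j\boldsymbol{e}_i\|=0$ is trivial). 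Integrating over the conditioning and using $d_j \ge N/2$ yields $\mathbb{P}(|m_{ij}^{(N)}| > t) \le C(r)\exp(-c(r)N t^2)$.

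The main obstacle, if any, is bookkeeping: verifying that the sequential sampling indeed produces $\mu_{N\times r}$ (a consequence of uniqueness of the bi-invariant measure on $V_r(\mathbb{R}^N)$) and tracking the loss in the exponent when passing from $d_j$ to $N$, which is absorbed into the $r$-dependent constants. The core probabilistic input — sharp sub-Gaussian concentration of linear functionals on the unit sphere — is standard and requires no further argument.
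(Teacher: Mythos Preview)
Your argument is correct and cleaner than the paper's. You reduce $m_{ij}^{(N)}$ to a single coordinate of a uniform vector on a sphere of dimension at least $N-r+1$ via rotational invariance and the sequential (Gram--Schmidt) sampling of the Stiefel manifold, and then invoke L\'evy's concentration inequality; the only thing to track is $d_j \ge N-r+1$, which is absorbed into $c(r)$, and the case $N<2r$ is handled by enlarging $C(r)$ since $|m_{ij}^{(N)}|\le 1$.

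The paper takes a genuinely different route: it uses the polar representation $\boldsymbol{X}=\boldsymbol{Z}\bigl(\tfrac{1}{N}\boldsymbol{Z}^\top\boldsymbol{Z}\bigr)^{-1/2}$ with $\boldsymbol{Z}$ a standard Gaussian matrix, splits $m_{ij}^{(N)}(\boldsymbol{X})$ into the Gaussian term $\tfrac{1}{N}(\boldsymbol{V}^\top\boldsymbol{Z})_{ij}$ and a correction $\tfrac{1}{N}\bigl(\boldsymbol{V}^\top\boldsymbol{Z}\bigl((\tfrac{1}{N}\boldsymbol{Z}^\top\boldsymbol{Z})^{-1/2}-\boldsymbol{I}_r\bigr)\bigr)_{ij}$, and then controls the correction through operator-norm concentration for $\tfrac{1}{N}\boldsymbol{Z}^\top\boldsymbol{Z}-\boldsymbol{I}_r$ and $\tfrac{1}{N}\boldsymbol{V}^\top\boldsymbol{Z}$ via standard random-matrix bounds. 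Your approach is more elementary and yields the result in a few lines without any matrix concentration; the paper's approach is heavier here but has the advantage that the same Gaussian representation and the same decomposition are reused verbatim for the companion anti-concentration and separation estimates (Lemmas~\ref{lem: anti-concentration GF} and~\ref{lem: separation initial data GF}), where one genuinely needs access to an explicit Gaussian term to get lower bounds on small-ball probabilities and on gaps between correlations.
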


\begin{proof}
From e.g.~\cite[Theorem 2.2.1]{chikuse2012statistics}, a random matrix \(\boldsymbol{X} \sim \mu_{N \times r}\) admits the representation
\[
\boldsymbol{X} = \boldsymbol{Z} \left(\frac{1}{N }\boldsymbol{Z}^\top \boldsymbol{Z}\right)^{-1/2},
\]
where \(\boldsymbol{Z} \in \mathbb{R}^{N \times r}\) has i.i.d.\ standard Gaussian entries. Therefore for every \(t >0\), we obtain
\[ 
\mu_{N \times r} \left( | m_{ij}^{(N)}(\boldsymbol{X}) | > t \right) = \mu_{N \times r} \left( \left | \left (  \frac{1}{N} \boldsymbol{V}^\top \boldsymbol{Z} \left( \frac{1}{N} \boldsymbol{Z}^\top \boldsymbol{Z} \right)^{-1/2} \right )_{ij} \right | > t \right) .
\]
We decompose the right-hand side as
\[
\begin{split}  
& \mu_{N \times r} \left( \left | \left (\frac{1}{N}\boldsymbol{V}^\top\boldsymbol{Z} \left(\frac{1}{N}\boldsymbol{Z}^\top \boldsymbol{Z}\right)^{-1/2} \right )_{ij} \right | > t\right)  \\
&= \mu_{N \times r} \left( \left | \frac{1}{N} \left (\boldsymbol{V}^\top \boldsymbol{Z} \right )_{ij} + \frac{1}{N} \left (\boldsymbol{V}^\top \boldsymbol{Z} \left(\left(\frac{1}{N}\boldsymbol{Z}^\top\boldsymbol{Z}\right)^{-1/2} -\boldsymbol{I}_r \right) \right )_{ij} \right | > t\right)  \\
& \leq \mu_{N \times r} \left(\left | \frac{1}{N} \left ( \boldsymbol{V}^\top\boldsymbol{Z} \right )_{ij} \right | + \left | \frac{1}{N} \left (\boldsymbol{V}^\top \boldsymbol{Z} \left(\left(\frac{1}{N}\boldsymbol{Z}^\top\boldsymbol{Z}\right)^{-1/2}-\boldsymbol{I}_r\right ) \right )_{ij} \right | > t \right).
\end{split} 
\]
We now look at the second summand. Using the submultiplicativity and norm bounds, we obtain
\[
\left | \frac{1}{N} \left (\boldsymbol{V}^\top \boldsymbol{Z} \left(\left(\frac{1}{N}\boldsymbol{Z}^\top\boldsymbol{Z}\right)^{-1/2} - \boldsymbol{I}_r \right )\right )_{ij} \right | \leq \left \lVert \frac{1}{N} \boldsymbol{V}^\top \boldsymbol{Z} \right \rVert_{\textnormal{op}} \left \lVert \left(\frac{1}{N} \boldsymbol{Z}^\top \boldsymbol{Z}\right)^{-1/2} - \boldsymbol{I}_r \right \rVert_{\textnormal{op}}.
\]
We then use the identity 
\[
\boldsymbol{A}^{-1/2}  - \boldsymbol{I}_r=  \boldsymbol{A}^{-1/2} ( \boldsymbol{A}^{1/2} - \boldsymbol{I}_r) = \boldsymbol{A}^{-1/2} ( \boldsymbol{A} - \boldsymbol{I}_r) (\boldsymbol{I}_r + \boldsymbol{A}^{1/2})^{-1},
\]
with \(\boldsymbol{A} = \frac{1}{N} \boldsymbol{Z}^\top \boldsymbol{Z}\), to obtain
\[
\left | \frac{1}{N} \left (\boldsymbol{V}^\top \boldsymbol{Z} \left(\left(\frac{1}{N}\boldsymbol{Z}^\top\boldsymbol{Z}\right)^{-1/2} - \boldsymbol{I}_r \right )\right )_{ij} \right | \leq \left \lVert \frac{1}{N} \boldsymbol{V}^\top \boldsymbol{Z} \right \rVert_{\textnormal{op}} \left \lVert \left(\frac{1}{N} \boldsymbol{Z}^\top \boldsymbol{Z}\right)^{-1/2} \right \rVert_{\textnormal{op}} \left \lVert \frac{1}{N} \boldsymbol{Z}^\top \boldsymbol{Z} - \boldsymbol{I}_r \right \rVert_{\textnormal{op}},
\]
where we bounded \(\norm{(\boldsymbol{I}_r + \boldsymbol{A}^{1/2})^{-1}}_{\textnormal{op}}\) above by \(1\). Standard results on the concentration of sub-Gaussian random matrices (see e.g.~\cite[Theorem 4.6.1]{vershynin2018high}) show that there exists an absolute constant \(C >0\) such that for every \(t >0\),
\begin{equation} \label{eq: op_norm_bound 1}
\mu_{N \times r} \left( \left \lVert \frac{1}{N}\boldsymbol{Z}^\top \boldsymbol{Z}-\boldsymbol{I}_r \right \rVert_{\textnormal{op}} > \max\left(C \left(\frac{\sqrt{r}+t}{\sqrt{N}}\right) ,C^{2} \left(\frac{\sqrt{r}+t}{\sqrt{N}}\right)^{2}\right)\right) \leq 2 \exp(-t^2).
\end{equation}
We note that \(\norm{\frac{1}{N}\boldsymbol{Z}^\top \boldsymbol{Z}-\boldsymbol{I}_r }_{\textnormal{op}} = |\lambda_{\min}(\frac{1}{N}\boldsymbol{Z}^\top \boldsymbol{Z}) - 1| \vee |\lambda_{\max}(\frac{1}{N}\boldsymbol{Z}^\top \boldsymbol{Z}) - 1|\). In particular, since \(\norm{ \left (\frac{1}{N}\boldsymbol{Z}^\top\boldsymbol{Z} \right )^{-1/2}}_{\textnormal{op}} = \left (\lambda_{\min}(\frac{1}{N}\boldsymbol{Z}^\top \boldsymbol{Z}) \right )^{-1/2}\), we can also deduce the bound:
\begin{equation} \label{eq: op_norm_bound 2}
\mu_{N \times r} \left(\left(1+\frac{C\left(\sqrt{r}+t\right)}{\sqrt{N}}\right)^{-1/2} \leq \norm{ \left (\frac{1}{N} \boldsymbol{Z}^\top \boldsymbol{Z} \right )^{-1/2}}_{\textnormal{op}} \leq \left(1-\frac{C \left( \sqrt{r} + t \right)}{\sqrt{N}} \right)^{-1/2} \right) \geq 1 - 2 \exp(-t^2).
\end{equation}
Finally, since the entries of the \(r \times r\) random matrix \(\frac{1}{N} \boldsymbol{V}^\top{\boldsymbol{Z}}\) are i.i.d.\ Gaussian with zero mean and variance \(\frac{1}{N}\), we have the estimate from~\cite[Theorem 4.4.5]{vershynin2018high}:
\begin{equation} \label{eq: op_norm_bound 3}
\mu_{N \times r} \left( \left \lVert \frac{1}{N} \boldsymbol{V}^\top \boldsymbol{Z} \right \rVert_{\textnormal{op}} > \frac{C}{\sqrt{N}} (2 \sqrt{r} + t) \right) \leq 2\exp(-t^2).
\end{equation}
We combine the above estimates~\eqref{eq: op_norm_bound 1}-~\eqref{eq: op_norm_bound 3} to conclude the proof. We split the event:
\[
\begin{split}
\mu_{N \times r}(\vert m_{ij}^{(N)}(\boldsymbol{X}) \vert > t) & \leq \mu_{N \times r} \left( \left | \frac{1}{N} \left (\boldsymbol{V}^\top \boldsymbol{Z} \right)_{ij} \right | > \frac{t}{2}\right)\\ 
& \quad + \mu_{N \times r} \left(\norm{\frac{1}{N}\boldsymbol{V}^\top \boldsymbol{Z}}_{\textnormal{op}} \norm{\left( \frac{1}{N} \boldsymbol{Z}^\top \boldsymbol{Z}\right)^{-1/2}}_{\textnormal{op}} \norm{\frac{1}{N} \boldsymbol{Z}^\top \boldsymbol{Z} - \boldsymbol{I}_r}_{\textnormal{op}} > \frac{t}{2}\right).
\end{split}
\]
Since \(\frac{1}{N}(\boldsymbol{V}^\top \boldsymbol{Z})_{ij} \sim \mathcal{N}(0,1/N)\), we have that the first term is bounded by
\[
\mu_{N \times r} \left( \left | \frac{1}{N} (\boldsymbol{V}^\top \boldsymbol{Z})_{ij} \right | > \frac{t}{2}\right) = \mu_{N \times r} \left( \left | \frac{1}{\sqrt{N}} (\boldsymbol{V}^\top \boldsymbol{Z})_{ij} \right | > \frac{t\sqrt{N}}{2}\right) \leq 2 \exp \left(- Nt^2/8 \right).
\]
Decomposing the second term on the intersection with the event \(\left\{\norm{\frac{1}{\sqrt{N}}(\boldsymbol{Z}^\top\boldsymbol{Z})^{-1/2}}_{\textnormal{op}} \leq \frac{1}{2t}\right\}\) gives 
\[
\begin{split}
& \mu_{N\times r} \left(\norm{\frac{1}{N} \boldsymbol{V}^\top \boldsymbol{Z}}_{\textnormal{op}} \norm{ \left(\frac{1}{N} \boldsymbol{Z}^\top\boldsymbol{Z}\right)^{-1/2}}_{\textnormal{op}}\norm{\frac{1}{N}\boldsymbol{Z}^\top\boldsymbol{Z} - \boldsymbol{I}_r}_{\textnormal{op}} > \frac{t}{2}\right) \\
&\leq \mu_{N \times r} \left(\norm{\frac{1}{N} \boldsymbol{V}^\top \boldsymbol{Z}}_{\textnormal{op}} \norm{\frac{1}{N} \boldsymbol{Z}^\top \boldsymbol{Z}- \boldsymbol{I}_r}_{\textnormal{op}} > t^2\right) + \mu_{N \times r} \left(\norm{ \left( \frac{1}{N}\boldsymbol{Z}^{\top }\boldsymbol{Z} \right)^{-1/2} }_{\textnormal{op}} > \frac{1}{2t}\right).
\end{split}
\]
Finally, decomposing again using the event \(\left\{\norm{\frac{1}{N} \boldsymbol{Z}^\top\boldsymbol{Z} - \boldsymbol{I}_r}_{\textnormal{op}} \leq t\right\}\), we obtain that  
\[
\begin{split} 
& \mu_{N \times r} \left(\norm{\frac{1}{N} \boldsymbol{V}^\top \boldsymbol{Z} }_{\textnormal{op}} \norm{
\left( \frac{1}{N} \boldsymbol{Z}^\top \boldsymbol{Z} \right)^{-1/2}}_{\textnormal{op}} \norm{\frac{1}{N}  \boldsymbol{Z}^\top \boldsymbol{Z} - \boldsymbol{I}_r}_{\textnormal{op}} > \frac{t}{2}\right) \\
& \leq \mu_{N \times r} \left(\norm{\frac{1}{N} \boldsymbol{V}^\top \boldsymbol{Z}}_{\textnormal{op}} > t\right) + \mu_{N \times r} \left ( \norm{\frac{1}{N} \boldsymbol{Z}^\top \boldsymbol{Z} - \boldsymbol{I}_r}_{\textnormal{op}} > t \right) + \mu_{N \times r} \left(\norm{ \left( \frac{1}{N}\boldsymbol{Z}^{\top }\boldsymbol{Z} \right)^{-1/2} }_{\textnormal{op}} > \frac{1}{2t}\right).
\end{split}
\]
Using~\eqref{eq: op_norm_bound 1},~\eqref{eq: op_norm_bound 2}, and~\eqref{eq: op_norm_bound 3} we then find that 
\[
\begin{split}
& \mu_{N \times r} \left(\norm{\frac{1}{N} \boldsymbol{V}^\top \boldsymbol{Z}}_{\textnormal{op}} \norm{ \left( \frac{1}{N} \boldsymbol{Z}^\top \boldsymbol{Z}\right)^{-1/2}}_{\textnormal{op}} \norm{\frac{1}{N} \boldsymbol{Z}^\top \boldsymbol{Z} - \boldsymbol{I}_r}_{\textnormal{op}} > \frac{t}{2}\right) \\
&\leq 2 \exp\left(-\left( \frac{t \sqrt{N}}{C} - 2 \sqrt{r} \right)^2 \right) + 2 \exp\left(-\left(\frac{t\sqrt{N}}{C}-\sqrt{r}\right)^2 \right) + 2\exp\left(-\left(\frac{\sqrt{N}}{C}(1-2t)-\sqrt{r}\right)^2\right).
\end{split}
\]
Combining all bounds, we obtain the desired result.
\end{proof}

\begin{lem} \label{lem: anti-concentration GF}
Let \(\boldsymbol{X} \sim \mu_{N \times r}\). Then, there exist constants \(C(r),c(r) >0\), depending only on \(r\), such that for every \(t >0\) and every \(i,j \in [r]\),
\[
\mu_{N \times r} \left(\vert m_{ij}^{(N)}(\boldsymbol{X}) \vert < \frac{t}{\sqrt{N}}\right) \leq \frac{4}{\sqrt{2 \pi}} t + C(r) \exp \left( -c(r)t \sqrt{N} \right).
\]
\end{lem}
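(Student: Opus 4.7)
The plan is to mirror the structure of the proof of Lemma~\ref{lem: concentration GF} but exploit, in the opposite direction, the Gaussian anti-concentration of the leading term. Using the representation $\boldsymbol{X} = \boldsymbol{Z}\bigl(\tfrac{1}{N}\boldsymbol{Z}^\top\boldsymbol{Z}\bigr)^{-1/2}$ with $\boldsymbol{Z}\in\R^{N\times r}$ having i.i.d.\ $\mathcal{N}(0,1)$ entries, we write $m_{ij}^{(N)}(\boldsymbol{X}) = A + B$, where
\[
A = \tfrac{1}{N}(\boldsymbol{V}^\top\boldsymbol{Z})_{ij}, \qquad B = \tfrac{1}{N}\Bigl(\boldsymbol{V}^\top\boldsymbol{Z}\,\bigl((\tfrac{1}{N}\boldsymbol{Z}^\top\boldsymbol{Z})^{-1/2} - \boldsymbol{I}_r\bigr)\Bigr)_{ij}.
\]
Since $\boldsymbol{V}$ has orthonormal columns of norm $\sqrt{N}$, we have $\sqrt{N}\,A = \tfrac{1}{\sqrt{N}}(\boldsymbol{V}^\top\boldsymbol{Z})_{ij} \sim \mathcal{N}(0,1)$, giving us a clean Gaussian handle on $A$.

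Next, I would split on whether the error term $B$ is negligible compared to the target scale $t/\sqrt{N}$:
\[
\mu_{N\times r}\!\left(|m_{ij}^{(N)}(\boldsymbol{X})| < \tfrac{t}{\sqrt{N}}\right) \le \mu_{N\times r}\!\left(|A| < \tfrac{2t}{\sqrt{N}}\right) + \mu_{N\times r}\!\left(|B| \ge \tfrac{t}{\sqrt{N}}\right).
\]
For the first term, Gaussian anti-concentration (boundedness of the standard Gaussian density by $1/\sqrt{2\pi}$) yields
\[
\mu_{N\times r}\!\left(|A| < \tfrac{2t}{\sqrt{N}}\right) = \mathbb{P}\!\left(|G| < 2t\right) \le \tfrac{4t}{\sqrt{2\pi}},
\]
which supplies the linear-in-$t$ contribution to the bound.

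For the second term, I would reuse the operator-norm factorization already established in the proof of Lemma~\ref{lem: concentration GF}:
\[
|B| \le \Bigl\|\tfrac{1}{N}\boldsymbol{V}^\top\boldsymbol{Z}\Bigr\|_{\textnormal{op}} \cdot \Bigl\|\bigl(\tfrac{1}{N}\boldsymbol{Z}^\top\boldsymbol{Z}\bigr)^{-1/2}\Bigr\|_{\textnormal{op}} \cdot \Bigl\|\tfrac{1}{N}\boldsymbol{Z}^\top\boldsymbol{Z} - \boldsymbol{I}_r\Bigr\|_{\textnormal{op}}.
\]
On the typical event where the middle factor is bounded by an absolute constant (say $2$), the first and third factors are each of order $(\sqrt{r}+u)/\sqrt{N}$ by the standard sub-Gaussian estimates~\eqref{eq: op_norm_bound 1} and~\eqref{eq: op_norm_bound 3}, with tail $2\exp(-u^2)$. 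Hence the product is of order $(\sqrt{r}+u)^2/N$, and to force it to exceed $t/\sqrt{N}$ one needs $u \gtrsim \sqrt{t\sqrt{N}}-\sqrt{r}$. Choosing $u$ at this threshold and applying a union bound over the three events then gives
\[
\mu_{N\times r}\!\left(|B| \ge \tfrac{t}{\sqrt{N}}\right) \le C(r)\exp\!\bigl(-c(r)\,t\sqrt{N}\bigr),
\]
exactly the desired subexponential remainder.

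The only subtle bookkeeping is the choice of the splitting radius: I took the cushion $2t/\sqrt{N}$ for $A$ and the exceedance threshold $t/\sqrt{N}$ for $B$, so that on $\{|B|<t/\sqrt{N}\}$ the triangle inequality forces $|A|<2t/\sqrt{N}$ whenever $|A+B|<t/\sqrt{N}$. The rest is arithmetic: absorbing the linear and $\sqrt{r}$ cross terms appearing in $u^2 = (c\,t\sqrt{N})^{1/2} - \sqrt{r})^2$ into the $r$-dependent constants $C(r),c(r)$. The only step requiring genuine care, rather than bookkeeping, is verifying that the chosen $u$ stays in the linear regime of the Hanson–Wright-type bound~\eqref{eq: op_norm_bound 1}; this restricts the argument to the range $t\sqrt{N} \lesssim N$, but for $t$ outside that range the claim is trivial since the probability is then bounded by $1$ up to adjusting constants.
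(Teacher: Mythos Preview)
Your proposal is correct and follows essentially the same approach as the paper: the same Gaussian representation and $A+B$ decomposition, the same Gaussian anti-concentration for the leading term yielding $4t/\sqrt{2\pi}$, and the same operator-norm factorization and union bound for the remainder $B$. The only cosmetic difference is that the paper parameterizes the thresholds for the three factors by an auxiliary $\eta\in(0,1)$ (taking $\|(\tfrac1N\boldsymbol{Z}^\top\boldsymbol{Z})^{-1/2}\|_{\mathrm{op}}\le 1/\eta$ and the other two factors $\le\sqrt{t\eta}$, then setting $\eta=1/2$), whereas you solve directly for the deviation level $u$; both choices land on the same $\exp(-c(r)\,t\sqrt{N})$ tail.
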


\begin{proof}
Using a similar argument as in Lemma~\ref{lem: concentration GF} and the fact that \(|a + b| \geq ||a| - |b||\), we have 
\[
\begin{split}
\mu_{N \times r} \left(\vert m_{ij}^{(N)}(\boldsymbol{X})  \vert < t \right) &\leq \mu_{N \times r} \left( \vert \frac{1}{N}(\boldsymbol{V}^\top\boldsymbol{Z})_{ij} \vert < 2t\right) \\
& \quad + \mu_{N \times r} \left(\norm{\frac{1}{N} \boldsymbol{V}^\top\boldsymbol{Z}}_{\textnormal{op}} \norm{ \left( \frac{1}{N} \boldsymbol{Z}^\top \boldsymbol{Z}\right)^{-1/2}}_{\textnormal{op}} \norm{ \frac{1}{N} \boldsymbol{Z}^\top \boldsymbol{Z} - \boldsymbol{I}_r}_{\textnormal{op}} > t\right).
\end{split}
\]
We bound the first term as
\[
\begin{split}
\mu_{N \times r}\left(\vert \frac{1}{N}(\boldsymbol{V}^\top\boldsymbol{Z})_{ij} \vert \leq 2t \right) & = 2 \mu_{N \times r} \left ( 0 \le \left |\frac{1}{\sqrt{N}} (\boldsymbol{V}^\top \boldsymbol{Z})_{ij} \right |  \leq 2 t\sqrt{N} \right ) \\
& = \frac{2}{\sqrt{2 \pi}} \int_0^{2t \sqrt{N}} e^{-x^2/2} \mathrm{d} x\\ 
&\leq \frac{4t \sqrt{N}}{\sqrt{2 \pi}},
\end{split}
\]
where we used \(e^{-x^2/2} \le 1\) in the last line. For the second term, we use a similar argument as in the proof of Lemma~\ref{lem: concentration GF} with different thresholds. For every \(\eta > 0\), we successively decompose on the events \(\norm{\left ( \frac{1}{N} \boldsymbol{Z}^\top \boldsymbol{Z} \right )^{-1/2}}_{\textnormal{op}} \leq \frac{1}{\eta}\) and \(\norm{\boldsymbol{I}_r - \frac{1}{N} \boldsymbol{Z}^\top \boldsymbol{Z}}_{\textnormal{op}} \leq \sqrt{t\eta}\). It then follows that
\[
\begin{split}
& \mu_{N \times r} \left(\norm{\frac{1}{N} \boldsymbol{V}^\top\boldsymbol{Z}}_{\textnormal{op}} \norm{ \left( \frac{1}{N} \boldsymbol{Z}^\top \boldsymbol{Z}\right)^{-1/2}}_{\textnormal{op}} \norm{ \frac{1}{N} \boldsymbol{Z}^\top \boldsymbol{Z} - \boldsymbol{I}_r}_{\textnormal{op}} > t\right)     \\
& \leq \mu_{N \times r} \left ( \norm{\frac{1}{N} \boldsymbol{V}^\top\boldsymbol{Z}}_{\textnormal{op}}  > \sqrt{t \eta} \right ) + \mu_{N \times r} \left ( \norm{ \left( \frac{1}{N} \boldsymbol{Z}^\top \boldsymbol{Z}\right)^{-1/2}}_{\textnormal{op}} > \frac{1}{\eta} \right )  \\
& \quad + \mu_{N \times r} \left ( \norm{ \frac{1}{N} \boldsymbol{Z}^\top \boldsymbol{Z} - \boldsymbol{I}_r}_{\textnormal{op}}> \sqrt{t \eta} \right ) \\
& \leq 2 \exp\left(-\left( \frac{\sqrt{t \eta N}}{C} - 2 \sqrt{r} \right)^2 \right) + 2 \exp \left(-\left(\frac{\sqrt{N}}{C}(1 - \eta)-\sqrt{r}\right)^2 \right) \\
& \quad + 2 \exp\left(-\left(\frac{\sqrt{t \eta N} }{C} -\sqrt{r} \right)^2 \right).
\end{split}
\]
Choosing \(\eta = \frac{1}{2}\) and replacing \(t\) by \(\frac{t}{\sqrt{N}}\) completes the proof.
\end{proof}

From Lemmas~\ref{lem: concentration GF} and~\ref{lem: anti-concentration GF}, it follows that \(\mu_{N \times r}\) satisfies Condition 1. We now proceed to verify that the invariant measure \(\mu_{N \times r}\) satisfies Condition 2.

\begin{lem} \label{lem: separation initial data GF}
Let \(p \ge 3\) and \(\lambda_1 \ge \cdots \ge \lambda_r \ge 0\). Let \(\boldsymbol{X} \sim \mu_{N \times r}\). Then, there exist constants \(C(r) >0\) and \(c(r,\{\lambda_i\}_{i=1}^r)>0\) such that for every \(0 < t < \gamma_1 \) and every \(1 \leq i,j, k, \ell \leq r\), \((i,j) \neq (k,\ell)\), 
\[
\begin{split}
& \mu_{N \times r} \left( \left | \frac{ \lambda_i \lambda_j \left (m_{ij}^{(N)}(\boldsymbol{X}) \right)^{p-2}}{\lambda_k \lambda_\ell \left (m_{k \ell}^{(N)}(\boldsymbol{X}) \right)^{p-2}}-1 \right | \leq \frac{t}{\gamma_1} \right) \\
& \leq C_1 e^{- c_1 \gamma_1^2} + C_2 e^{- c_2 (\lambda_k \lambda_\ell)^{\frac{1}{p-2}} \sqrt{N} t} + \frac{4}{\sqrt{2\pi} \sqrt{ 1 + \left( \frac{\lambda_i \lambda_j}{\lambda_k \lambda_\ell} \right)^{\frac{2}{p-2}}}} t.
\end{split}
\]
\end{lem}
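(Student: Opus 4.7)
The strategy is to reduce the ratio condition to an approximate linear condition on two Gaussians and then apply Gaussian anti-concentration. Set $\delta = t/\gamma_1 \in (0,1)$, and introduce the rescaled variables
\[
a = (\lambda_i \lambda_j)^{\frac{1}{p-2}} m_{ij}^{(N)}(\boldsymbol{X}), \qquad b = (\lambda_k \lambda_\ell)^{\frac{1}{p-2}} m_{k \ell}^{(N)}(\boldsymbol{X}),
\]
so that the event in question becomes $\{|a^{p-2}/b^{p-2} - 1| \le \delta\}$. First I restrict to $\mathcal{E}_1 = \{|m_{ij}^{(N)}(\boldsymbol{X})|, |m_{k\ell}^{(N)}(\boldsymbol{X})| \le \gamma_1/\sqrt{N}\}$, which costs $C_1 e^{-c_1 \gamma_1^2}$ by Lemma~\ref{lem: concentration GF} applied to both index pairs, and which delivers the deterministic bounds $|a| \le (\lambda_i\lambda_j)^{1/(p-2)} \gamma_1/\sqrt{N}$, $|b| \le (\lambda_k\lambda_\ell)^{1/(p-2)} \gamma_1/\sqrt{N}$.

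Second, following the proof of Lemma~\ref{lem: concentration GF}, I use the representation $\boldsymbol{X} = \boldsymbol{Z}(\boldsymbol{Z}^\top\boldsymbol{Z}/N)^{-1/2}$ with $\boldsymbol{Z} \in \mathbb{R}^{N \times r}$ having i.i.d.\ $\mathcal{N}(0,1)$ entries, and decompose $m_{ij}^{(N)}(\boldsymbol{X}) = g_{ij}/\sqrt{N} + \mathcal{E}_{ij}$, where $g_{ij} := (\boldsymbol{V}^\top \boldsymbol{Z})_{ij}/\sqrt{N}$ with $\boldsymbol{V} = [\boldsymbol{v}_1, \dots, \boldsymbol{v}_r]$. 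Orthogonality of the spikes gives that the $g_{ij}$ are i.i.d.\ $\mathcal{N}(0,1)$, while the error is bounded by the product $\|\tfrac{1}{N}\boldsymbol{V}^\top\boldsymbol{Z}\|_{\textnormal{op}}\, \|(\tfrac{1}{N}\boldsymbol{Z}^\top\boldsymbol{Z})^{-1/2}\|_{\textnormal{op}}\, \|\tfrac{1}{N}\boldsymbol{Z}^\top\boldsymbol{Z} - \boldsymbol{I}_r\|_{\textnormal{op}}$. Thresholding the last operator norm at a scale linear in $t$, in the spirit of the proof of Lemma~\ref{lem: anti-concentration GF}, absorbs the approximation error into the second exponential $C_2 e^{-c_2 (\lambda_k\lambda_\ell)^{1/(p-2)} \sqrt{N}\, t}$; the factor $(\lambda_k\lambda_\ell)^{1/(p-2)}$ arises because $b$ sets the natural scale of the condition.

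Third, on the Gaussian-replaced event I use the appropriate factorization of $a^{p-2} - b^{p-2}$. For $p$ odd, $a^{p-2} - b^{p-2} = (a-b)\sum_{s=0}^{p-3} a^s b^{p-3-s}$, and $(a/b)^{p-2} \in [1-\delta, 1+\delta]$ forces $a/b > 0$ with $|a/b - 1| \lesssim \delta/(p-2)$. For $p$ even, $a^{p-2} - b^{p-2} = (a-b)(a+b)\sum_{s=0}^{(p-4)/2} a^{2s} b^{p-4-2s}$, and $|a/b|$ is pinned to a neighborhood of $1$, yielding the two branches $a \approx \pm b$. In every branch the condition reduces to $|a - \varepsilon b| \lesssim \delta |b|/(p-2)$ with $\varepsilon \in \{+1\}$ (odd $p$) or $\varepsilon \in \{\pm 1\}$ (even $p$). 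On $\mathcal{E}_1$ this forces $|\sqrt{N}(a - \varepsilon b)| \lesssim t\, (\lambda_k\lambda_\ell)^{1/(p-2)}/(p-2)$. Since $\sqrt{N}(a - \varepsilon b) = (\lambda_i\lambda_j)^{1/(p-2)} g_{ij} - \varepsilon (\lambda_k\lambda_\ell)^{1/(p-2)} g_{k\ell}$ is a centered Gaussian with variance $(\lambda_i\lambda_j)^{2/(p-2)} + (\lambda_k\lambda_\ell)^{2/(p-2)}$, the density bound $\rho(0) \le 1/\sqrt{2\pi\sigma^2}$ together with a union bound over the at most two signs $\varepsilon$ yields the leading term
\[
\frac{4\, t}{\sqrt{2\pi}\, \sqrt{1 + (\lambda_i\lambda_j/(\lambda_k\lambda_\ell))^{2/(p-2)}}}.
\]

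The main obstacle is the third step: carefully translating the multiplicative constraint $|(a/b)^{p-2} - 1| \le \delta$ into a sharp additive bound on $|a - \varepsilon b|$ with constants that reproduce the stated coefficient, and simultaneously handling both branches $a \approx \pm b$ in the even-$p$ case. A secondary subtlety is the correct tuning of the threshold for $\|\tfrac{1}{N}\boldsymbol{Z}^\top\boldsymbol{Z} - \boldsymbol{I}_r\|_{\textnormal{op}}$ in the second step, which must be chosen so that the resulting exponential rate is linear in $\sqrt{N}\,t$ (as in Lemma~\ref{lem: anti-concentration GF}) rather than quadratic.
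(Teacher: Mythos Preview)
Your approach is correct and matches the paper's in all essential ingredients: restrict to the concentration event $\{|m_{ij}|,|m_{k\ell}|\le \gamma_1/\sqrt{N}\}$, use the Bartlett representation $\boldsymbol{X}=\boldsymbol{Z}(\tfrac{1}{N}\boldsymbol{Z}^\top\boldsymbol{Z})^{-1/2}$ to replace $m_{ij}$ by $g_{ij}/\sqrt{N}$ up to an operator-norm controlled error (handled exactly as in Lemma~\ref{lem: anti-concentration GF}), and finish with Gaussian anti-concentration for the linear form $\beta_{ij}g_{ij}-\varepsilon\beta_{k\ell}g_{k\ell}$ of variance $\beta_{ij}^2+\beta_{k\ell}^2$.

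The one point where the paper is slicker is the linearization. Instead of factorizing $a^{p-2}-b^{p-2}$, the paper simply observes that $[1-\delta,1+\delta]\subset[(1-\delta)^{p-2},(1+\delta)^{p-2}]$ for $p\ge 3$ and $\delta\in(0,1)$, so the event $\mathcal{A}(\delta)=\{|(a/b)^{p-2}-1|\le\delta\}$ is contained in $\mathcal{B}(\delta)=\{(1-\delta)^{p-2}\le (a/b)^{p-2}\le(1+\delta)^{p-2}\}$; taking $(p-2)$th roots then gives $|a/b-1|\le\delta$ (odd $p$) or $||a/b|-1|\le\delta$ (even $p$) in one line. The paper also performs this linearization \emph{before} the Gaussian replacement, so the replacement error enters additively in the linear quantity $a-b$ rather than inside a ratio of powers, which makes the bookkeeping you flag as ``the main obstacle'' disappear. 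Your factorization route gives the slightly sharper bound $|a/b-1|\lesssim\delta/(p-2)$ and handles the two branches $a\approx\pm b$ for even $p$ more explicitly than the paper (which glosses over this by saying ``controlling $\mathcal{B}(\delta)$ is equivalent to controlling $\bar{\mathcal{B}}(\delta)$''), but this extra precision is not needed for the stated bound.
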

\begin{proof}
To simplify notation slightly, we let \(\alpha_{ij} = \lambda_i \lambda_j\) for every \(i,j \in [r]\). For every \(\delta >0\), we denote by \(\mathcal{A}(\delta)\) the desired event, i.e., 
\[
A(\delta) = \left \{ \left | \frac{ \alpha_{ij} \left (m_{ij}(\boldsymbol{X}) \right)^{p-2}}{\alpha_{k \ell} \left (m_{k \ell}(\boldsymbol{X}) \right)^{p-2}} - 1 \right | \leq \delta \right \} = \left \{ 1 - \delta \le \frac{ \alpha_{ij} \left (m_{ij}(\boldsymbol{X}) \right)^{p-2}}{\alpha_{k \ell} \left (m_{k \ell}(\boldsymbol{X}) \right)^{p-2}} \le 1 + \delta \right \} .
\]
We then introduce the event \(\mathcal{B}(\delta)\) given by
\[
\mathcal{B}(\delta) = \left \{ (1 - \delta)^{p-2} \le \frac{ \alpha_{ij} \left (m_{ij}(\boldsymbol{X}) \right)^{p-2}}{\alpha_{k \ell} \left (m_{k \ell}(\boldsymbol{X}) \right)^{p-2}} \le (1 + \delta)^{p-2} \right \} 
\]
so that \(\mathcal{A}(\delta) \subseteq \mathcal{B}(\delta)\), with equality when \(p=3\). It therefore suffices to estimate the event \(\mathcal{B}(\delta)\). We note that controlling \(\mathcal{B}(\delta)\) is equivalent to controlling 
\[
\bar{\mathcal{B}}(\delta) = \left\{ \left | \frac{\beta_{ij} m_{ij}(\boldsymbol{X}) - \beta_{k \ell} m_{k \ell}(\boldsymbol{X})}{\beta_{k \ell} m_{k \ell}(\boldsymbol{X})} \right | \leq \delta \right\},
\]
where \(\beta_{ij} = \alpha_{ij}^{\frac{1}{p-2}}\). In light of Lemma~\ref{lem: concentration GF}, since \(\boldsymbol{X} \sim \mu_{N \times r}\), the event 
\[
\mathcal{E}(\gamma_1) = \left \{ \boldsymbol{X} \colon \left |  m_{ij}(\boldsymbol{X}) \right| \le \frac{\gamma_1}{\sqrt{N}} \right \}
\]
occurs with probability at least \(1 - C e^{- c \gamma_1^2}\). We introduce a further event: for every \(t \ge 0\), we consider the event \(\tilde{\mathcal{B}}(t)\) given by
\[
\tilde{\mathcal{B}}(t) = \left\{ \left | \beta_{ij} m_{ij}(\boldsymbol{X}) - \beta_{k \ell} m_{k \ell}(\boldsymbol{X}) \right | < \frac{t}{\sqrt{N}} \right\}.
\]
Then, we note that \(\tilde{\mathcal{B}}^\textnormal{c}(t) \cap \mathcal{E}(\gamma_1) \subset \bar{\mathcal{B}}^{\textnormal{c}} \left ( \frac{t}{\beta_{k \ell} \gamma_1} \right)\), so that 
\[
\mu_{N \times r} \left( \bar{\mathcal{B}}\left ( \frac{t}{\beta_{k \ell} \gamma_1} \right) \right ) \leq \mu_{N \times r} \left( \tilde{\mathcal{B}}(t) \cup \mathcal{E}^{\textnormal{c}}(\gamma_1) \right) \le \mu_{N \times r} \left ( \tilde{\mathcal{B}}(t) \right) + \mu_{N \times r} \left (\mathcal{E}^{\textnormal{c}}(\gamma_1) \right).
\]
It remains to estimate \(\mu_{N \times r} \left ( \tilde{\mathcal{B}}(t) \right)\). We will proceed in a similar way as done for the proofs of Lemmas~\ref{lem: concentration GF} and~\ref{lem: anti-concentration GF} by using the representation \(\boldsymbol{X} = \boldsymbol{Z} \left(\frac{1}{N} \boldsymbol{Z}^\top \boldsymbol{Z}\right)^{-1/2}\) for \(\boldsymbol{X} \sim \mu_{N \times r}\) (see e.g.~\cite{chikuse2012statistics}). In particular, if we write 
\[
\begin{split}
m_{ij}^{(N)}(\boldsymbol{X}) & =  \left( \frac{1}{N}\boldsymbol{V}^\top  \boldsymbol{Z} \left(\frac{1}{N} \boldsymbol{Z}^\top \boldsymbol{Z}\right)^{-1/2} \right)_{ij} = \frac{1}{N} (\boldsymbol{V}^\top  \boldsymbol{Z})_{ij} + \frac{1}{N}  \left (\boldsymbol{V}^\top  \boldsymbol{Z} \left ( \left ( \frac{1}{N} \boldsymbol{Z}^\top \boldsymbol{Z} \right )^{-1/2} - \boldsymbol{I}_r \right) \right)_{ij},
\end{split}
\]
we can upper bound \(\mu_{N \times r} \left ( \tilde{\mathcal{B}}(t) \right)\) by 
\[
\begin{split}
\mu_{N \times r} \left ( \tilde{\mathcal{B}}(t) \right) &  \leq \mu_{N \times r} \left ( \left | \beta_{ij} \frac{1}{N} (\boldsymbol{V}^\top  \boldsymbol{Z})_{ij}  - \beta_{k \ell} \frac{1}{N} (\boldsymbol{V}^\top  \boldsymbol{Z})_{k \ell}\right | < \frac{2t}{\sqrt{N}} \right ) \\
& \quad + \mu_{N \times r} \left( \left | \beta_{ij} \frac{1}{N}  \left (\boldsymbol{V}^\top  \boldsymbol{Z} \left( \left ( \frac{1}{N} \boldsymbol{Z}^\top \boldsymbol{Z} \right )^{-1/2} - \boldsymbol{I}_r \right) \right)_{ij} \right |> \frac{t}{2\sqrt{N}} \right ) \\
& \quad + \mu_{N \times r} \left( \left | \beta_{k \ell} \frac{1}{N}  \left (\boldsymbol{V}^\top  \boldsymbol{Z} \left( \left ( \frac{1}{N} \boldsymbol{Z}^\top \boldsymbol{Z} \right )^{-1/2} - \boldsymbol{I}_r \right) \right)_{k \ell} \right |> \frac{t}{2\sqrt{N}} \right ).
\end{split}
\]
The second and third concentration estimates can be bounded as done in the proof of Lemma~\ref{lem: anti-concentration GF}, so that there exist constants \(C,c(r)\) such that 
they are bounded by \(C\exp\left(-c(r)\frac{t}{\beta_{ij} \vee \beta_{k \ell}} \sqrt{N} \right)\). To bound the first term, we note that \(\beta_{ij} \frac{1}{\sqrt{N}} (\boldsymbol{V}^\top \boldsymbol{Z})_{ij}\) is a Gaussian random variable with zero mean and variance \(\beta_{ij}^2\). We easily note that the random variable \(\beta_{ij} \frac{1}{\sqrt{N}} (\boldsymbol{V}^\top \boldsymbol{Z})_{ij} - \beta_{k \ell} \frac{1}{\sqrt{N}} (\boldsymbol{V}^\top \boldsymbol{Z})_{k \ell}\) follows a normal distribution with zero mean and variance  \(\beta_{ij}^2 + \beta_{k \ell}^2\), ensuring that 
\[
\begin{split}
\mu_{N \times r} \left ( \left | \beta_{ij} \frac{1}{N} (\boldsymbol{V}^\top  \boldsymbol{Z})_{ij}  - \beta_{k \ell} \frac{1}{N} (\boldsymbol{V}^\top  \boldsymbol{Z})_{k \ell}\right | < \frac{2t}{\sqrt{N}} \right ) &= \mu_{N \times r} \left ( |\mathcal{N}(0,\beta_{ij}^2 + \beta_{k \ell}^2)| < 2t \right ) \\
& \leq \frac{4}{\sqrt{2 \pi} \sqrt{\beta_{ij}^2 + \beta_{k \ell}^2}} t.
\end{split} 
\]
Finally, we find that
\[
\begin{split}
\mu_{N \times r} \left( \bar{\mathcal{B}} \left ( \frac{t}{\beta_{k \ell} \gamma_1} \right) \right) & \leq C_1  e^{- c_1 \gamma_1^2} + C_2 \exp \left(-c_2 (\{\lambda_i \}_{i=1}^r ) t\sqrt{N}\right) +\frac{4}{\sqrt{2\pi} \sqrt{\beta_{ij}^2 + \beta_{k \ell}^2}} t,
\end{split}
\]
which completes the proof since 
\[
\begin{split}
\mu_{N \times r} \left( \mathcal{A} \left( \frac{t}{\gamma} \right)\right) & \leq C_1 e^{-c_1\gamma_1^2} + C_2 \exp \left(- c_2 \beta_{k \ell} t \sqrt{N}  \right) + \frac{4 \beta_{k \ell}}{\sqrt{2\pi} \sqrt{\beta_{ij}^2 + \beta_{k \ell}^2}} t.
\end{split}
\]
\end{proof} 

It remains to prove the following concentration estimate which ensures that \(\mu_{N \times r}\) weakly satisfies Condition 1 at level \(\infty\). 

\begin{lem} \label{lem: concentration cond infinity}
For every \(T > 0\) and every \(1 \leq i,j \leq r\), there exist \(C_1, C_2 > 0\), depending only on \(p,r,\{\lambda_{i}\}_{i=1}^r\), such that for every \(\gamma>0\),
\[
\mu_{N \times r} \left(\sup_{t \leq T} \vert e^{t L_0} L_0 m_{ij}^{(N)} (\boldsymbol{X}) \vert \geq \gamma \right) \leq C_{1} N T \exp\left(-C_{2}\gamma^2N\right),
\]
with \(\mathbb{P}\)-probability at least \(1-\mathcal{O}(e^{-KN})\).
\end{lem}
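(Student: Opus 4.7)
My plan is to combine a uniform Lipschitz-in-time control for $\phi_t := e^{tL_0} L_0 m_{ij}^{(N)}$, a fixed-time Lévy-type concentration on $\mathcal{M}_{N,r}$, and a discretization plus union bound in $t$. Throughout I restrict to the $\mathbb{P}$-event of probability $\ge 1 - e^{-KN}$ on which Lemma~\ref{lem: regularity H0} supplies $\|H_0\|_{\mathcal{G}^n} \le CN$ for every relevant order $n$.

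For the Lipschitz-in-time step, differentiating under the semigroup and invoking $L^\infty$-contractivity of $e^{tL_0}$ gives $\|\phi_t - \phi_s\|_\infty \le |t-s|\,\|L_0^2 m_{ij}^{(N)}\|_\infty$, and two iterations of the ladder relation (Lemma~\ref{lem: ladder relations}) combined with the regularity estimate for $H_0$ bound $\|L_0^2 m_{ij}^{(N)}\|_\infty$ by a constant $\Lambda' = \Lambda'(p,r,\{\lambda_i\})$. At a fixed $t$, I would write $\phi_t(\boldsymbol{X}) = L_0 m_{ij}^{(N)}(\boldsymbol{\Phi}_t(\boldsymbol{X}))$ along the $L_0$-flow $\boldsymbol{\Phi}_t$, combining the Lipschitz estimate $\mathrm{Lip}(L_0 m_{ij}^{(N)}) \le \|L_0 m_{ij}^{(N)}\|_{\mathcal{G}^1}/\sqrt{N} \le C/\sqrt{N}$ (again by ladder) with a Gronwall control of the flow map driven by $\|\nabla^2 H_0\|_{\mathrm{op}} \le \|H_0\|_{\mathcal{G}^2}/N \le C$. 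Transferring the Gaussian isoperimetric inequality to $\mu_{N \times r}$ via the representation $\boldsymbol{X} = \boldsymbol{Z}(\boldsymbol{Z}^\top \boldsymbol{Z}/N)^{-1/2}$ already employed in Lemmas~\ref{lem: concentration GF} and~\ref{lem: anti-concentration GF} then yields $\mu_{N \times r}(|\phi_t - \mathbb{E}_\mu \phi_t| \ge \gamma/2) \le C e^{-c \gamma^2 N}$ in the regime $\gamma \gtrsim N^{-1/2}$ where the lemma is nontrivial. For even $p$ the mean vanishes by the $\mu$-symmetry $\boldsymbol{X} \mapsto -\boldsymbol{X}$, under which $H_0$ is invariant, $\nabla H_0$ anti-invariant, the $L_0$-flow equivariant, and $L_0 m_{ij}^{(N)}$ therefore odd; for odd $p$ I would average over the column-wise sign flips $\boldsymbol{X} \mapsto \boldsymbol{X}\,\mathrm{diag}(\epsilon_1,\ldots,\epsilon_r)$ to reduce the mean to a term of size $\|\phi_t\|_{L^2(\mu)} \lesssim N^{-1/2}$ via Cauchy--Schwarz, which is negligible in the nontrivial regime.

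The last step is a time discretization: partition $[0,T]$ at spacing $\eta := \gamma/(2\Lambda')$ giving $K = \lceil 2T\Lambda'/\gamma\rceil$ grid points, use the Lipschitz-in-time bound to reduce $\{\sup_{t\le T}|\phi_t| \ge \gamma\}$ to $\bigcup_k \{|\phi_{t_k}| \ge \gamma/2\}$, and union-bound to obtain $K\cdot Ce^{-c\gamma^2 N} \lesssim C_1 NT\,e^{-C_2 \gamma^2 N}$ after absorbing the $1/\gamma$ factor into $N$ using the trivial regime $\gamma \le N^{-1/2}$. The main obstacle is the single-time concentration: the naive Lipschitz constant of $\phi_t$ grows like $e^{Ct}/\sqrt{N}$ through Gronwall for the flow, which would pollute the final bound with an $e^{CT}$ factor forbidden by the statement. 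Resolving this requires either a refined averaging argument exploiting the rotational invariance of $\mu_{N\times r}$ and of the law of $\boldsymbol{W}$ to reduce to a fixed reference point, or alternatively a direct polynomial-moment computation of $\mathbb{E}_\mu\phi_t^{2k}$ with $k \asymp N\gamma^2$ followed by Markov, both of which adapt the toolbox of~\cite{ben2020bounding, arous2020algorithmic} from $\mathbb{S}^{N-1}(\sqrt{N})$ to the Stiefel setting.
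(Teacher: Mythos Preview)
Your outline has the right skeleton (Lipschitz-in-time control, fixed-time concentration, discretization plus union bound), and you have correctly isolated the real difficulty: a Lipschitz-concentration argument for $\phi_t = L_0 m_{ij}^{(N)} \circ \boldsymbol{\Phi}_t$ would inherit the Gronwall factor $e^{Ct}$ from the flow map and produce an inadmissible $e^{CT}$ prefactor. What is missing is the actual resolution of this point; you defer it to ``a refined averaging argument exploiting rotational invariance'' or a moment computation, without saying what that argument is.

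The paper does not use Lipschitz concentration for the fixed-time step at all. Instead it shows that, under $\mu_{N\times r}\otimes\mathbb{P}$, the \emph{law} of a suitably normalized version of $\nabla H_0(\hat{\boldsymbol{X}}_t)$ is exactly $\mu_{N\times r}$ for every fixed $t$. Concretely: since the covariance of $H_0$ is invariant under $\boldsymbol{X}\mapsto\boldsymbol{O}_N\boldsymbol{X}$, the pair $(\hat{\boldsymbol{X}}_t,\nabla H_0(\hat{\boldsymbol{X}}_t))$ inherits left $O(N)$-invariance from $\hat{\boldsymbol{X}}_0\sim\mu_{N\times r}$. One then pushes $\nabla H_0(\hat{\boldsymbol{X}}_t)$ back onto $\mathcal{M}_{N,r}$ via the polar retraction $R^N_{\hat{\boldsymbol{X}}_0}(\cdot)$ and right-multiplies by an independent Haar matrix $\boldsymbol{H}\in O(r)$, obtaining $\boldsymbol{Z}_t\in\mathcal{M}_{N,r}$ that is invariant under both left and right rotations, hence $\boldsymbol{Z}_t\sim\mu_{N\times r}$. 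The fixed-time bound then follows immediately from the elementary concentration of $\langle\boldsymbol{Z}_t,[\boldsymbol{v}_i]_j\rangle/N$ under $\mu_{N\times r}$ (Lemma~\ref{lem: concentration GF}), with no flow-Lipschitz constant entering. The passage back from $\boldsymbol{Z}_t$ to $\nabla H_0(\hat{\boldsymbol{X}}_t)$ uses only that the retraction normalizer $(\boldsymbol{I}_r+\tfrac{1}{N}\boldsymbol{U}_t^\top\boldsymbol{U}_t)^{1/2}$ has bounded operator norm on $\mathcal{E}^c$.

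This rotational-invariance-of-the-law argument is exactly the ``toolbox of~\cite{arous2020algorithmic}'' you cite, and the polar retraction together with the right Haar randomization is the new ingredient needed to adapt it from the sphere to the Stiefel manifold. Your symmetry argument for the mean (sign flips) is not needed once one has the exact-law identification. The time discretization and union bound then proceed essentially as you describe; the paper uses an $a/N$-net because its Lipschitz-in-$t$ bound for $\boldsymbol{Z}_t$ involves the H\"older-$\tfrac12$ continuity of the matrix square root, but either mesh is adequate for the stated conclusion.
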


We prove Lemma~\ref{lem: concentration cond infinity} following the same ideas to those used to prove Theorem 6.2 of~\cite{arous2020algorithmic}. In the following, we let \(\hat{\boldsymbol{X}}_t\) denote the gradient flow process generated by \(L_0\) (see~\eqref{eq: generator GF noise}). The first step is to establish the rotational invariance properties of this dynamics. Compared with what was done in Section 6 of~\cite{arous2020algorithmic}, here we need to introduce an intermediate quantity to study the gradient \(\nabla H_0 (\hat{\boldsymbol{X}}_t)\), which is necessary to avoid having to control quantities that vanish exponentially over time. In addition, we note that the invariant measure on the normalized Stiefel manifold is characterized by left and right invariance under rotations, whereas the invariant measure on the sphere requires only verification of invariance under rotations.  

In the remainder of this subsection, for every \(\boldsymbol{X} \in \mathcal{S}_{N,r}\) we let \(R^N_{\boldsymbol{X}} \colon T_{\boldsymbol{X}} \mathcal{S}_{N,r} \to \mathcal{S}_{N,r}\) denote the polar retraction defined by 
\[
R^N_{\boldsymbol{X}}(\boldsymbol{U}) = \left (\boldsymbol{X} +\boldsymbol{U} \right) \left(\boldsymbol{I}_r + \frac{1}{N} \boldsymbol{U}^\top \boldsymbol{U}\right)^{-1/2}.
\]
which verifies \(\left(R^N_{\boldsymbol{X}} (\boldsymbol{U})\right)^\top R^N_{\boldsymbol{X}} (\boldsymbol{U}) = N \boldsymbol{I}_r\). 

\begin{lem} \label{lem: noise invariance}
For every \(\boldsymbol{X} \sim \mu_{N \times r}\) and every \(\boldsymbol{U} \in T_{\boldsymbol{X}} \mathcal{S}_{N,r}\), we let \(R^N_{\boldsymbol{X}}(\boldsymbol{U})\) denote the polar retraction at the point \((\boldsymbol{X},\boldsymbol{U})\). Then, for every \(t \geq 0\), if \(\hat{\boldsymbol{X}}_0 \sim \mu_{N \times r}\), \(\hat{\boldsymbol{X}}_t\) and \(R^N_{\boldsymbol{X}} \left(\nabla H_0(\hat{\boldsymbol{X}}_t)\right)\) are elements of \(\mathcal{S}_{N,r}\) that are invariant under left rotations. 
\end{lem}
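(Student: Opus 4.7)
The plan is to reduce left-rotational invariance to two independent ingredients: the rotational invariance (in law) of the Gaussian tensor $\boldsymbol{W}$, and the $O(N)$-left-invariance of the initial distribution $\mu_{N \times r}$. I will first prove an equivariance property of the random field $H_0$ and of its Riemannian gradient, then transport it along the trajectories of the gradient flow, and finally apply it to the polar retraction.

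Step 1 (equivariance of $H_0$ and $\nabla H_0$). Fix $O \in O(N)$ and set $\tilde{H}_0(\boldsymbol{X}) := H_0(O\boldsymbol{X})$. Expanding~\eqref{eq: Hamiltonian H0} in coordinates, $\tilde{H}_0 = H_0^{\tilde{\boldsymbol{W}}}$ where $\tilde{\boldsymbol{W}}$ is obtained from $\boldsymbol{W}$ by contracting each of the $p$ legs with $O$; Gaussian rotational invariance yields $\tilde{\boldsymbol{W}} \stackrel{d}{=} \boldsymbol{W}$, hence $\tilde{H}_0 \stackrel{d}{=} H_0$ as random fields on $\mathcal{M}_{N,r}$. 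Combining the Euclidean chain rule $\hat{\nabla}(H_0 \circ O)(\boldsymbol{X}) = O^\top \hat{\nabla} H_0(O\boldsymbol{X})$ with~\eqref{eq: riemannian gradient on normalized stiefel} (the correction factor depends only on $\boldsymbol{X}^\top \hat{\nabla} H_0 + \hat{\nabla} H_0^\top \boldsymbol{X}$, which transforms consistently because $(O\boldsymbol{X})^\top(O\boldsymbol{X}) = \boldsymbol{X}^\top \boldsymbol{X}$), I obtain the gradient equivariance
\[
\nabla \tilde{H}_0(\boldsymbol{X}) = O^\top \nabla H_0(O\boldsymbol{X}).
\]

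Step 2 (transport along the flow). Write $\hat{\boldsymbol{X}}_t(\boldsymbol{X}_0, \boldsymbol{W})$ for the solution of $d\boldsymbol{Y}_t = -\nabla H_0^{\boldsymbol{W}}(\boldsymbol{Y}_t)\,dt$ with $\boldsymbol{Y}_0 = \boldsymbol{X}_0$, and set $\boldsymbol{Z}_t := O^\top \hat{\boldsymbol{X}}_t(O\boldsymbol{X}_0, \boldsymbol{W})$. Then $\boldsymbol{Z}_0 = \boldsymbol{X}_0$ and, by Step 1, $d\boldsymbol{Z}_t = -\nabla \tilde{H}_0(\boldsymbol{Z}_t)\,dt$, so pathwise $\boldsymbol{Z}_t = \hat{\boldsymbol{X}}_t(\boldsymbol{X}_0, \tilde{\boldsymbol{W}})$. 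Equivalently
\[
\hat{\boldsymbol{X}}_t(O\boldsymbol{X}_0, \boldsymbol{W}) = O\, \hat{\boldsymbol{X}}_t(\boldsymbol{X}_0, \tilde{\boldsymbol{W}}) \stackrel{d}{=} O\, \hat{\boldsymbol{X}}_t(\boldsymbol{X}_0, \boldsymbol{W}).
\]
Averaging over $\boldsymbol{X}_0 \sim \mu_{N \times r}$ and invoking the $O$-left-invariance of $\mu_{N \times r}$ gives $\hat{\boldsymbol{X}}_t \stackrel{d}{=} O\hat{\boldsymbol{X}}_t$, proving the first assertion. Applying $\nabla H_0^{\boldsymbol{W}}$ to both sides of the pathwise identity and using Step 1 once more yields the \emph{joint} distributional identity
\[
\bigl(\hat{\boldsymbol{X}}_t,\, \nabla H_0(\hat{\boldsymbol{X}}_t)\bigr) \stackrel{d}{=} \bigl(O\hat{\boldsymbol{X}}_t,\, O\nabla H_0(\hat{\boldsymbol{X}}_t)\bigr).
\]

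Step 3 (the retraction) and main obstacle. From the definition of $R^N_{\boldsymbol{X}}$ and $(O\boldsymbol{U})^\top(O\boldsymbol{U}) = \boldsymbol{U}^\top \boldsymbol{U}$ one reads off the equivariance $R^N_{O\boldsymbol{X}}(O\boldsymbol{U}) = O\,R^N_{\boldsymbol{X}}(\boldsymbol{U})$. Composing with the joint identity from Step 2 gives
\[
R^N_{\hat{\boldsymbol{X}}_t}\bigl(\nabla H_0(\hat{\boldsymbol{X}}_t)\bigr) \stackrel{d}{=} R^N_{O\hat{\boldsymbol{X}}_t}\bigl(O \nabla H_0(\hat{\boldsymbol{X}}_t)\bigr) = O\,R^N_{\hat{\boldsymbol{X}}_t}\bigl(\nabla H_0(\hat{\boldsymbol{X}}_t)\bigr),
\]
which is the second assertion. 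The main subtlety is that the second conclusion is \emph{not} a direct consequence of the first (which is only a marginal statement about $\hat{\boldsymbol{X}}_t$): it requires the joint invariance of the pair $(\hat{\boldsymbol{X}}_t, \nabla H_0(\hat{\boldsymbol{X}}_t))$, which is why Step 2 is formulated pathwise with an explicit reference to $\tilde{\boldsymbol{W}}$ before averaging over the initial condition, so that the gradient can be read off consistently along the same trajectory. Verifying right-invariance is not needed here since the lemma asks only for left-invariance.
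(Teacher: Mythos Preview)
Your argument is correct and follows essentially the same route as the paper: establish the distributional equivariance \(H_0(O\cdot)\stackrel{d}{=}H_0\) from rotational invariance of \(\boldsymbol{W}\), derive the pathwise identity \(\hat{\boldsymbol{X}}_t(O\boldsymbol{X}_0,\boldsymbol{W})=O\hat{\boldsymbol{X}}_t(\boldsymbol{X}_0,\tilde{\boldsymbol{W}})\) by uniqueness of ODE solutions, and then push this through the retraction. Your explicit insistence on the \emph{joint} law of \((\hat{\boldsymbol{X}}_t,\nabla H_0(\hat{\boldsymbol{X}}_t))\) is in fact a clearer formulation of the step the paper handles more implicitly.

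One minor mismatch worth flagging: in Step~3 you take the base point of the retraction to be \(\hat{\boldsymbol{X}}_t\), whereas the object actually used downstream (see the proof of Lemma~\ref{lem: concentration cond infinity}) is \(R^N_{\hat{\boldsymbol{X}}_0}\bigl(\nabla H_0(\hat{\boldsymbol{X}}_t)\bigr)\), with the \emph{initial} point as base. Your pathwise argument in Step~2 already gives the stronger joint invariance of \(\bigl(\hat{\boldsymbol{X}}_0,\nabla H_0(\hat{\boldsymbol{X}}_t)\bigr)\), so the same equivariance \(R^N_{O\boldsymbol{X}}(O\boldsymbol{U})=O\,R^N_{\boldsymbol{X}}(\boldsymbol{U})\) applies verbatim with \(\boldsymbol{X}=\hat{\boldsymbol{X}}_0\); you should just state that version to match the lemma as it is used.
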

\begin{proof}
We let \(\boldsymbol{O}_N\) denote the elements of the orthogonal groups \(O(N)\). The initial condition \(\hat{\boldsymbol{X}}_0 \sim \mu_{N \times r}\) satisfies 
\[
\boldsymbol{O}_N \hat{\boldsymbol{X}}_0 \sim \hat{\boldsymbol{X}}_0.
\]
In the following, we let 
\[
\tilde{\boldsymbol{X}}_0 = \boldsymbol{O}_N \hat{\boldsymbol{X}}_0 \quad \textnormal{and} \quad \tilde{H}_0 (\boldsymbol{X}) = H_0 \left( \boldsymbol{O}_N^{-1} \boldsymbol{X} \right),
\]
and \(\tilde{\boldsymbol{X}}_t\) denote the gradient flow on \(\tilde{H}_0\) started from \(\tilde{\boldsymbol{X}}_0\). Since the Hamiltonian \(H_0\) is a centered Gaussian process with covariance function given by
\[
\E \left[H_0(\boldsymbol{X})H_0(\boldsymbol{Y})\right] = N \sum_{1 \leq i,j \leq r}\lambda_{i}\lambda_{j}\left(\frac{\langle \boldsymbol{x}_i ,\boldsymbol{y}_j \rangle}{N}\right)^p, 
\]
we see that \(H_0\) is invariant under right rotations, i.e., \(H_0(\boldsymbol{O}_N\boldsymbol{X})\) is equidistributed with \(H_0(\boldsymbol{X})\). Since \(\tilde{\boldsymbol{X}}_0\) is equidistributed with \(\hat{\boldsymbol{X}}_0\), and \(\tilde{H}_0(\boldsymbol{X})\) is equidistributed with \(H_0(\boldsymbol{X})\) for every \(\boldsymbol{X} \in \mathcal{S}_{N,r}\), the gradient \(\nabla\tilde{H}_0(\tilde{\boldsymbol{X}}_t)\) is equal in distribution as \(\nabla H_0(\hat{\boldsymbol{X}}_t)\). Since \(\tilde{\boldsymbol{X}}_t = \boldsymbol{O}_N \hat{\boldsymbol{X}}_t\), we deduce that \(\hat{\boldsymbol{X}}_t\) is invariant under right rotations for every \(t \geq 0\). We also have that  
\[
\nabla\tilde{H}_0(\tilde{\boldsymbol{X}}_t) = \boldsymbol{O}_N\nabla H_0 (\hat{\boldsymbol{X}}_t ).
\]
Since \(\nabla H_0(\hat{\boldsymbol{X}}_t)\) is equidistributed with \(\nabla \tilde{H}_0(\tilde{\boldsymbol{X}}_t)\), we have that \(\nabla H_0(\hat{\boldsymbol{X}}_t)\) is also invariant under rotations from the right. Finally, we have that 
\[
R^N_{\boldsymbol{X}}(\nabla H_0(\hat{\boldsymbol{X}}_t)) = \left(\boldsymbol{X}+\nabla H_0(\hat{\boldsymbol{X}}_t)\right)\left( \boldsymbol{I}_r + \frac{1}{N} \left( \nabla H_0(\hat{\boldsymbol{X}}_t)\right)^\top\nabla H_0(\hat{\boldsymbol{X}}_t)\right)^{-1/2}
\]
is well defined for every \(t \geq 0\) and, in particular, for every value of \( \norm{\nabla H_0(\hat{\boldsymbol{X}}_t)}_2\). Since \(\boldsymbol{X} \sim \mu_{N \times r}\), we have that \(R^N_{\boldsymbol{X}}(\nabla H_0(\hat{\boldsymbol{X}}_t))\) is an element of $\mathcal{S}_{N,r}$ and is invariant under rotations from the right.
\end{proof}

\begin{rmk}
We remark that one could use the matrix \(R^N_{\boldsymbol{x}} (\nabla_{\mathcal{S}^N} H_0(\hat{\boldsymbol{x}}_t))\) also in the spherical case studied in~\cite{arous2020algorithmic}. In this case, \(R^N_{\boldsymbol{x}} (\nabla_{\mathcal{S}^N} H_0(\hat{\boldsymbol{x}}_t))\) reduces to the vector 
\[
R^N_{\boldsymbol{x}} (\nabla_{\mathcal{S}^N} H_0(\hat{\boldsymbol{x}}_t))  = \frac{\boldsymbol{x}_0 + \nabla_{\mathcal{S}^N} H_0 (\hat{\boldsymbol{x}}_t)}{\norm{\boldsymbol{x}_0 + \nabla_{ \mathcal{S}^N} H_0(\hat{\boldsymbol{x}}_t)}_2},
\]
with \(\boldsymbol{x}_0\) being distributed according to the invariant measure on the sphere \(\mathcal{S}^N = \mathbb{S}^{N-1}(\sqrt{N})\). The orthogonality between the sphere and its tangent space ensures that the normalizing factor \(\norm{\boldsymbol{x}_0 + \nabla_{\mathcal{S}^N} H_0(\hat{\boldsymbol{x}}_t)}_2\) is always strictly greater than one.
\end{rmk}

Having Lemma~\ref{lem: noise invariance} at hand, we are now able to prove that the invariant measure \(\mu_{N \times r}\) weakly satisfies Condition \(1\) at level \(\infty\). 

\begin{proof}[\textbf{Proof of Lemma~\ref{lem: concentration cond infinity}}]
By definition of the semigroup of the noise process, it holds for every \(1 \leq i,j \leq r\),
\[
e^{tL_0} L_0 m_{ij}^{(N)}(\hat{\boldsymbol{X}}_0) = L_0 m_{ij}^{(N)}(\hat{\boldsymbol{X}}_t)= -\frac{1}{N} \langle  \nabla H_0(\hat{\boldsymbol{X}}_t), [\boldsymbol{v}_{i}]_{j} \rangle,
\]
where \([\boldsymbol{v}_i]_j = [\boldsymbol{0}, \ldots, \boldsymbol{0}, \boldsymbol{v}_i, \boldsymbol{0}, \ldots, \boldsymbol{0}] \in \R^{N \times r}\) denotes the matrix with all zero columns except for the \(j\)th column, which is \(\boldsymbol{v}_i\). Therefore, it suffices to study \(L_0 m_{ij}^{(N)}(\hat{\boldsymbol{X}}_t)\). We let \(\boldsymbol{H} \in \R^{r \times r}\) be a matrix sampled from the Haar measure on \(O(r)\). For every \(t \geq 0\), we define \(\boldsymbol{Z}_t = R^N_{\hat{\boldsymbol{X}}_0} (\nabla H_0(\hat{\boldsymbol{X}}_t)) \boldsymbol{H}\). According to Lemma~\ref{lem: noise invariance} and by definition of the Haar measure, we have that \(\boldsymbol{Z}_t\) belongs to \(\mathcal{S}_{N,r}\) and is invariant under left and right rotations. Since this property uniquely characterizes the invariant measure on \(\mathcal{S}_{N,r}\), we deduce that \(\boldsymbol{Z}_t\) is distributed according to \(\mu_{N \times r}\). Combining this with Lemma~\ref{lem: concentration GF}, we obtain that for every \(t \geq 0\) there exist \(C(r),c(r) > 0\) such that for every \(\gamma >0\),
\[
\mu_{N \times r} \otimes \mathbb{P} \left(\frac{1}{N}\vert \langle \boldsymbol{Z}_t,[\boldsymbol{v}_i]_j \rangle \vert \geq \gamma\right) \leq C(r)\exp\left(-c(r)\gamma^2N\right).
\]
The rest of the proof follows a similar argument as done for Theorem 6.2 of~\cite{arous2020algorithmic}, based on a discretization of the trajectory. In light of Lemma~\ref{lem: regularity H0}, for constants \(\Gamma = \Gamma (p,\{\lambda_i\}_{i=1}^r)\) and \(K = K(p,\{\lambda_i\}_{i=1}^r)\) we have that the event
\[
\mathcal{E} = \left\{\norm{H_0}_{\mathcal{G}^2} \geq \Gamma N \right\}
\]
holds with \(\mathbb{P}\)-probability at most \(\exp\left(- K N\right)\). We direct the reader to Definition~\ref{def: G norm} for a definition of the \(\mathcal{G}^n\)-norm on \(\mathcal{S}_{N,r}\). According to Definition~\ref{def: G norm}, we easily notice that, under the event \(\mathcal{E}^\textnormal{c}\), 
\[
\norm{\vert \nabla^2 H_0(\boldsymbol{X}) \vert_{\textnormal{op}}}_{\infty} \leq \Gamma.
\]
Then, for every \(0 \leq s \leq t\) we have that
\[
\begin{split}
\norm{ \boldsymbol{Z}_t - \boldsymbol{Z}_s}_{\textnormal{F}} &= \norm{R^N_{\hat{\boldsymbol{X}}_0}(\nabla H_0(\hat{\boldsymbol{X}}_t))\boldsymbol{H} - R^N_{\hat{\boldsymbol{X}}_0}(\nabla H_0(\hat{\boldsymbol{X}}_s))\boldsymbol{H}}_{\textnormal{F}} \\
& \leq r \norm{R^N_{\hat{\boldsymbol{X}}_0}(\nabla H_0(\hat{\boldsymbol{X}}_t))-R^N_{\hat{\boldsymbol{X}}_0}(\nabla H_0(\hat{\boldsymbol{X}}_s))}_{\textnormal{F}},
\end{split}
\]
where we used \(\norm{\boldsymbol{H}}_{\text{F}} \le r\). Recall that by definition,
\[
R^N_{\hat{\boldsymbol{X}}_0}(\nabla H_0(\hat{\boldsymbol{X}}_t)) = \left( \hat{\boldsymbol{X}}_0 + \nabla H_0(\hat{\boldsymbol{X}}_t)\right) \left( \boldsymbol{I}_r + \frac{1}{N} \nabla H_0(\hat{\boldsymbol{X}}_t)^\top \nabla H_0 (\hat{\boldsymbol{X}}_t) \right)^{-1/2}.
\]
In the following, we let \(\boldsymbol{U}_t\) denote the Riemannian gradient \(\boldsymbol{U}_t = \nabla H_0(\hat{\boldsymbol{X}}_t) \in \R^{N \times r}\) for every \(t \geq 0\). We therefore write the difference \(R^N_{\hat{\boldsymbol{X}}_0}(\nabla H_0(\hat{\boldsymbol{X}}_t))-R^N_{\hat{\boldsymbol{X}}_0}(\nabla H_0(\hat{\boldsymbol{X}}_s))\) as
\[
\begin{split}
& R^N_{\hat{\boldsymbol{X}}_0}(\nabla H_0(\hat{\boldsymbol{X}}_t))-R^N_{\hat{\boldsymbol{X}}_0}(\nabla H_0(\hat{\boldsymbol{X}}_s)) \\
& = \left ( \hat{\boldsymbol{X}}_0 + \boldsymbol{U}_t \right) \left(\left(\boldsymbol{I}_r+\frac{1}{N}\boldsymbol{U}_t^\top\boldsymbol{U}_t\right)^{-1/2}-\left(\boldsymbol{I}_r+\frac{1}{N}\boldsymbol{U}_s^\top\boldsymbol{U}_s\right)^{-1/2}\right)  \\
& \quad + \left(\boldsymbol{U}_t-\boldsymbol{U}_s\right)\left(\boldsymbol{I}_r+\frac{1}{N}\boldsymbol{U}_s^\top\boldsymbol{U}_s\right)^{-1/2},
\end{split}
\]
so that for every \(0 \leq s \leq t\),
\[
\begin{split}
\norm{ \boldsymbol{Z}_t - \boldsymbol{Z}_s}_{\textnormal{F}} & \leq r \norm{\hat{\boldsymbol{X}}_0 + \boldsymbol{U}_t}_{\textnormal{F}} \norm{\left(\boldsymbol{I}_r + \frac{1}{N}\boldsymbol{U}_t^\top\boldsymbol{U}_t\right)^{-1/2}-\left(\boldsymbol{I}_r+\frac{1}{N}\boldsymbol{U}_s^\top\boldsymbol{U}_s\right)^{-1/2}}_{\textnormal{F}} + r \norm{\boldsymbol{U}_t - \boldsymbol{U}_s}_{\textnormal{F}} \\
& \leq r \norm{\hat{\boldsymbol{X}}_0 + \boldsymbol{U}_t}_{\textnormal{F}} \norm{\left(\boldsymbol{I}_r + \frac{1}{N}\boldsymbol{U}_t^\top\boldsymbol{U}_t\right)^{1/2}-\left(\boldsymbol{I}_r +\frac{1}{N}\boldsymbol{U}_s^\top\boldsymbol{U}_s\right)^{1/2}}_{\textnormal{F}} + r \norm{\boldsymbol{U}_t - \boldsymbol{U}_s}_{\textnormal{F}},
\end{split}
\]
where we used the fact that \(\norm{\left(\boldsymbol{I}_r + \frac{1}{N} \nabla H_0(\hat{\boldsymbol{X}}_t)^\top \nabla H_0(\hat{\boldsymbol{X}}_t)\right)^{-1/2}}_{\textnormal{F}} \leq 1\) and that \(\boldsymbol{A}^{-1} - \boldsymbol{B}^{-1} = - \boldsymbol{A}^{-1} \left (\boldsymbol{A} - \boldsymbol{B} \right) \boldsymbol{B}^{-1} \) for invertible matrices \(\boldsymbol{A}, \boldsymbol{B} \in \R^{r \times r}\). Hölder continuity for the matrix square-root (see e.g.~\cite[Theorem X.1.1]{Bhatia}) then implies that
\begin{equation} \label{eq: bound difference Z}
\norm{\boldsymbol{Z}_t-\boldsymbol{Z}_s}_{\textnormal{F}} \leq \frac{r}{\sqrt{N}} \norm{\hat{\boldsymbol{X}}_0+\boldsymbol{U}_t}_{\textnormal{F}} \norm{\boldsymbol{U}_t^\top\boldsymbol{U}_t -\boldsymbol{U}_s^\top\boldsymbol{U}_s}^{\frac{1}{2}}_{\textnormal{F}} + r \norm{\boldsymbol{U}_t - \boldsymbol{U}_s}_{\textnormal{F}}.
\end{equation}
We now observe that 
\[
\frac{\mathrm{d}}{\mathrm{d}t} \nabla H_0(\hat{\boldsymbol{X}}_t) = \nabla^2 H_0 \left(\nabla H_0(\hat{\boldsymbol{X}}_t), \cdot\right),
\]
so that, under the event \(\mathcal{E}^{\textnormal{c}}\), we have that
\[
\begin{split}
\norm{\boldsymbol{U}_t - \boldsymbol{U}_s}_{\textnormal{F}} & = \norm{\nabla H_0(\hat{\boldsymbol{X}}_t) -\nabla H_0(\hat{\boldsymbol{X}}_s)}_{\textnormal{F}} \\
& \leq \int_s^t \norm{\nabla^2 H_0 \left(\nabla H_0(\hat{\boldsymbol{X}}_u), \cdot \right)}_{\textnormal{F}} \mathrm{d} u \\
&\leq \Gamma \sqrt{N} (t-s).
\end{split}
\]
Similarly, 
\[
\frac{\mathrm{d}}{\mathrm{d}t}  \nabla H_0(\hat{\boldsymbol{X}}_t)^\top\nabla H_0(\hat{\boldsymbol{X}}_t)  = 2\nabla^2  H_0 \left(\nabla H_0(\hat{\boldsymbol{X}}_t), \nabla H_0(\hat{\boldsymbol{X}}_t)\right),
\]
so that under \(\mathcal{E}^{\textnormal{c}}\),
\[
\norm{\boldsymbol{U}_t^\top\boldsymbol{U}_t-\boldsymbol{U}_s^\top\boldsymbol{U}_s}_{\textnormal{F}} \leq 2 \Gamma^2 N (t-s).
\]
Finally, since \(\norm{\boldsymbol{U}_t}_{\textnormal{F}}\) is a decreasing function of time, under \(\mathcal{E}^{\textnormal{c}}\), it holds that 
\[
\norm{\hat{\boldsymbol{X}}_0 +\boldsymbol{U}_t}_{\textnormal{F}} \leq (1+\Gamma)\sqrt{N}.
\]
According to~\eqref{eq: bound difference Z}, we therefore obtain on the event \(\mathcal{E}^{\textnormal{c}}\),
\begin{equation} \label{eq: bound difference Z 2}
\norm{\boldsymbol{Z}_t-\boldsymbol{Z}_s}_{\textnormal{F}} \leq r \Gamma (1+\Gamma) \sqrt{N} \sqrt{t-s} + \sqrt{2} r \Gamma \sqrt{N} (t-s).
\end{equation} 

Now, for a constant \(a \in (0,1)\) we let \(N_{\frac{a}{N}}([0,T])\) be a \(\frac{a}{N}\)-net of the interval \([0,T]\). According to~\eqref{eq: bound difference Z 2}, for every \(t \in [0,T]\), there exists \(\tilde{t} \in N_{\frac{a}{N}}([0,T])\) such that 
\[
\norm{\boldsymbol{Z}_t - \boldsymbol{Z}_{\tilde{t}}}_{\textnormal{F}} \leq r \sqrt{a} \Gamma (1 + \Gamma). 
\]
Combining Lemma~\ref{lem: concentration GF} with a union bound, for every \(1 \leq i,j \leq r\) we obtain that
\[
\mu_{N \times r} \otimes \mathbb{P} \left(\sup_{t \in N_{\frac{a}{N}}([0,T])} \frac{1}{N} \left | \langle \boldsymbol{Z}_t, [\boldsymbol{v}_i]_j \rangle \right | \geq \gamma\right) \leq \frac{N T}{a} C(r) \exp(-c(r) N\gamma^2). 
\]
Then, for every \(t \in [0,T]\), there exists \(\tilde{t} \in N_{\frac{a}{N}}([0,T])\) such that 
\[
\frac{1}{N} \left | \langle \boldsymbol{Z}_t,[\boldsymbol{v}_i]_j \rangle \right | = \frac{1}{N} \left | \langle \boldsymbol{Z}_{\tilde{t}}, [\boldsymbol{v}_i]_j \rangle +\langle \boldsymbol{Z}_t -\boldsymbol{Z}_{\tilde{t}},[\boldsymbol{v}_i]_j \rangle \right | \leq \frac{1}{N} \left |\langle \boldsymbol{Z}_{\tilde{t}}, [\boldsymbol{v}_i]_j \rangle \right | + \frac{\sqrt{a} \Gamma (1+\Gamma)}{N},
\]
where we used Cauchy-Schwarz to bound \(\left |\langle \boldsymbol{Z}_t -\boldsymbol{Z}_{\tilde{t}},[\boldsymbol{v}_i]_j \rangle\right | \le \norm{\boldsymbol{Z}_t -\boldsymbol{Z}_{\tilde{t}}}_{\textnormal{F}} \norm{[\boldsymbol{v}_i]_j}_{\textnormal{F}} \leq \sqrt{a} \Gamma (1 + \Gamma)\). This then implies that 
\[
\mu_{N \times r} \otimes \mathbb{P} \left(\sup_{t \in [0,T]} \frac{1}{N} \left | \langle \boldsymbol{Z}_t, [\boldsymbol{v}_i]_j \rangle \right | \geq \gamma \right) \leq \frac{NT}{a} C(r)\exp(-c(r,a,\Gamma) N \gamma^2). 
\]
We note that \(R^N_{\boldsymbol{X}}(\nabla H_0(\hat{\boldsymbol{X}}_t)) = \boldsymbol{Z}_t\boldsymbol{H}^\top\). By the Cauchy-Schwarz inequality and orthonormality of \(\boldsymbol{H}\) we have that
\[
\frac{1}{N} \left | \langle R^N_{\hat{\boldsymbol{X}}_0}(\nabla H_0(\hat{\boldsymbol{X}}_t)),[\boldsymbol{v}_i]_j \rangle \right | \leq \frac{r}{N} \left |\langle \boldsymbol{Z}_t, [\boldsymbol{v}_i]_{j} \rangle \right |,
\] 
so that
\[
\mu_{N \times r} \otimes \mathbb{P} \left(\sup_{t \in [0,T]} \frac{1}{N} \left | \langle R^N_{\hat{\boldsymbol{X}}_0}(\nabla H_0(\hat{\boldsymbol{X}}_t)),[\boldsymbol{v}_i]_j \rangle \right | \geq \gamma \right) \leq \frac{NT}{a} C(r)\exp(-c(r,a,\Gamma) N\gamma^2 / r^2). 
\]
Additionally, we note that \(\nabla H_0(\hat{\boldsymbol{X}}_t)\) can be written as
\[
\nabla H_0(\hat{\boldsymbol{X}}_t) = R^N_{\boldsymbol{X}} (\nabla H_0(\hat{\boldsymbol{X}}_t)) \left(\boldsymbol{I}_r + \frac{1}{N} \nabla H_0(\hat{\boldsymbol{X}}_t)^\top \nabla H_0(\hat{\boldsymbol{X}}_t)\right)^{1/2} - \hat{\boldsymbol{X}}_0. 
\] 
Since \(\norm{\nabla H_0(\hat{\boldsymbol{X}}_t)}_{\textnormal{F}}^2\) is decreasing and on the event \(\mathcal{E}^{\textnormal{c}}\) it holds that 
\[
\norm{\nabla H_0(\hat{\boldsymbol{X}}_0)}_{\textnormal{F}}^2 \leq N \norm{\nabla H_0(\hat{\boldsymbol{X}}_0)}_\infty^2 \le \Gamma^2 N, 
\]
the matrix \(\left(\boldsymbol{I}_r + \frac{1}{N} \nabla H_0(\hat{\boldsymbol{X}}_t)^\top \nabla H_0(\hat{\boldsymbol{X}}_t)\right)^{1/2}\) has bounded spectral norm for all \(t \ge 0\). This implies that 
\[
\frac{1}{N}\langle  \nabla H_0(\hat{\boldsymbol{X}}_t), [\boldsymbol{v}_i]_j \rangle \leq \sqrt{1 + \Gamma^2} \frac{1}{N} \langle R^N_{\hat{\boldsymbol{X}}_0} (\nabla H_0(\hat{\boldsymbol{X}}_t)),[\boldsymbol{v}_i]_j \rangle - \frac{1}{N}\langle \hat{\boldsymbol{X}}_0,[\boldsymbol{v}_i]_j \rangle.
\]
We thus reach
\[
\begin{split}
& \mu_{N \times r} \otimes \mathbb{P} \left(\sup_{t \in [0,T]} \frac{1}{N} \vert \langle  \nabla H_0(\hat{\boldsymbol{X}}_t), [\boldsymbol{v}_i]_j \rangle \vert \geq \gamma \right) \\
& \leq \mu_{N \times r} \otimes \mathbb{P} \left(\frac{1}{N} \left | \langle \hat{\boldsymbol{X}}_0,[\boldsymbol{v}_i]_j \rangle \right | \geq \frac{\gamma}{2}\right) + \mu_{N \times r} \otimes \mathbb{P} \left(\sup_{t \in [0,T]} \frac{1}{N} \left | \langle R^N_{\hat{\boldsymbol{X}}_0} (\nabla H_0(\hat{\boldsymbol{X}}_t)),[\boldsymbol{v}_i]_j \rangle \right | \geq \frac{\gamma}{2 \sqrt{1 + \Gamma^2}} \right).
\end{split}
\]
This completes the proof of Lemma~\ref{lem: concentration cond infinity} upon combining the fact that $\hat{\boldsymbol{X}}_0 \sim \mu_{N \times r}$ with the deviation inequality obtained above for the second term of the right hand side of the previous line.
\end{proof}

\printbibliography
\end{document}